\documentclass[bj,authoryear,noshowframe]{imsart}

\RequirePackage{amsmath,amssymb,amsthm,mathtools}
\RequirePackage{booktabs}
\RequirePackage{microtype}
\RequirePackage{array}

\makeatletter
\def\journal@name{Preprint}
\def\journal@url{https://arxiv.org/}
\makeatother

\startlocaldefs
\mathtoolsset{showonlyrefs=true}
\numberwithin{equation}{section}

\theoremstyle{plain}
\newtheorem{theorem}{Theorem}[section]
\newtheorem{proposition}[theorem]{Proposition}
\newtheorem{lemma}[theorem]{Lemma}
\newtheorem{corollary}[theorem]{Corollary}

\theoremstyle{definition}
\newtheorem{assumption}[theorem]{Assumption}

\theoremstyle{definition}
\newtheorem{remark}[theorem]{Remark}

\newcommand{\Smat}{\mathbb{S}}
\newcommand{\Splus}{\mathbb{S}_{+}}
\newcommand{\Id}{\operatorname{Id}}
\newcommand{\rank}{\operatorname{rank}}
\newcommand{\dist}{\operatorname{dist}}
\newcommand{\Gr}{\operatorname{Gr}}
\newcommand{\tr}{\operatorname{tr}}

\newcommand{\diag}{\operatorname{diag}}
\newcommand{\argmin}{\operatorname*{argmin}}

\newcommand{\qf}{\operatorname{qf}}
\endlocaldefs

\begin{document}

\begin{frontmatter}

\title{Nonstandard likelihood-ratio limits under semidefinite rank constraints}
\runtitle{Nonstandard likelihood-ratio limits}

\begin{aug}
\author[A]{\inits{D.}\fnms{Didier}~\snm{Concordet}\ead[label=e1]{didier.concordet@envt.fr}}
\orcid{0000-0003-3916-577X}
\address[A]{INTHERES, Universit\'e de Toulouse, INRAE, ENVT,
23 chemin des Capelles, 31076 Toulouse Cedex, France.
ORCID: \href{https://orcid.org/0000-0003-3916-577X}{0000-0003-3916-577X}
\printead[presep={,\ }]{e1}}
\end{aug}

\begin{abstract}
	We study likelihood-ratio tests for the hypothesis that a
	positive-semidefinite matrix has rank at most a prescribed value. The null
	hypothesis is stratified: points of maximal allowed rank lie on a regular
	boundary stratum, whereas lower-rank points are singular. Consequently, the
	usual chi-bar-square calibration on the top stratum does not by itself
	describe the whole composite null, especially along sequences whose rank
	changes at the local $n^{-1/2}$ scale.
	
	After profiling regular nuisance parameters, we derive a common reduced
	Gaussian experiment for every fixed null rank and for all admissible local
	rank transitions. On the top stratum, the classical chi-bar-square law is
	recovered. At lower ranks, the limit generally involves projection onto a
	nonconvex rank-constrained semidefinite set.
	
	Our main calibration result shows that, under isotropy, the top-stratum law
	is least favourable over all fixed null strata and all local null rank
	transitions. We also prove the corresponding transition dominance under
	arbitrary anisotropy when the active corank is one. Finally, on the top
	stratum, we obtain a conditional shape derivative for the limiting
	distribution and its critical value. Gaussian covariance models and
	finite-sample experiments illustrate nuisance profiling, rank transitions,
	anisotropy, and orientation sensitivity.
\end{abstract}

\begin{keyword}
\kwd{Chi-bar-square distribution}
\kwd{Likelihood-ratio test}
\kwd{Positive-semidefinite constraint}
\kwd{Rank stratification}
\kwd{Sensitivity analysis}
\end{keyword}

\end{frontmatter}

\section{Introduction}
\label{sec:introduction}

At regular interior points, Wilks' theorem gives a universal chi-square
calibration for likelihood-ratio tests.  At a boundary point or a singular
point of the null set, the local parameter space is no longer a linear
subspace, and the limiting statistic is governed by tangent geometry.  This
principle originates with \citet{Chernoff1954} and was developed in broad
constrained-inference settings by \citet{Shapiro1985},
\citet{SelfLiang1987}, and \citet{Geyer1994}; see also
\citet{SilvapulleSen2005}.  When the relevant tangent set is convex, the
limit is typically a chi-bar-square distribution.  Singular semi-algebraic
hypotheses can instead generate nonconvex tangent sets and limiting laws
outside that family \citep{Drton2009}.

Most likelihood-ratio calculations at semidefinite boundaries concern
convex local problems, notably tests of variance components
\citep{BaeyCournedeKuhn2019}.  Those results describe the regular boundary
stratum of the rank problem considered here, but not the singular strata
contained in a composite bounded-rank null.  More generally, likelihood-ratio
approximations can change rapidly near boundaries and singularities, so a
pointwise limit at the singular point need not describe nearby sequences well
\citep{MitchellAllmanRhodes2018}.  This distinction is central when the
singularity is an interface between rank strata.

We study the positive-semidefinite rank hypothesis
\begin{equation}
H_0:\ \Sigma\succeq0,\quad \rank(\Sigma)\le r,
\qquad
H_1:\ \Sigma\succeq0,
\label{eq:intro-hypotheses}
\end{equation}
where $\Sigma$ is a real symmetric $q\times q$ matrix and $0\le r<q$.  At a
null point of rank exactly $r$, the bounded-rank set is locally a smooth
stratum and classical conic likelihood-ratio theory gives a chi-bar-square
limit.  The null also contains matrices of rank $s<r$.  At these points the
local null permits the creation of positive eigenvalues only up to the
remaining rank budget $r-s$, and the resulting tangent set is nonconvex.

The statistical importance of lower-rank strata is already visible in the
work of \citet{ChenFang2019} on general matrix-rank inference.  For the
composite hypothesis $\rank(\Pi)\le r$, they show that calibration as though
the rank were always exactly $r$ can be invalid and that the top-rank limiting
law need not stochastically dominate the lower-rank laws.  They also study
local perturbations and construct bootstrap procedures for unrestricted
rectangular matrices.  The present problem is different: it concerns the
likelihood ratio under a positive-semidefinite constraint, and this additional
geometry both changes the lower-stratum limit and makes possible
least-favourable dominance results that fail in the unrestricted rank
problem.

The mathematical ingredients needed to expose that geometry are available
from variational analysis and convex geometry.  Tangent-cone formulas for
bounded-rank matrix sets have been developed for determinantal varieties and,
more recently, for symmetric and positive-semidefinite rank constraints
\citep{SchneiderUschmajew2015,OlikierMlinaricUschmajew2026,YangGaoYuan2025}.
Moreau decomposition and conic intrinsic volumes provide the classical
projection and chi-bar-square tools \citep{Moreau1962,McCoyTropp2014}.
Likewise, perturbation of optimization values and differentiation of moving
probability contents have established foundations
\citep{BonnansShapiro2000,Uryasev1994,HantouteHenrionPerezAros2017}.  We do
not claim these geometric or variational ingredients as new.  The statistical
question is how they combine, after nuisance profiling, to determine
likelihood-ratio calibration on an entire stratified semidefinite null and
along sequences that move between its strata.

The first contribution is an explicit active-block reduction of the general
Chernoff--Shapiro tangent-set limit.  After profiling regular nuisance
parameters and all matrix directions common to the null and alternative, the
limit depends only on the block acting on the kernel of the true matrix.  A
single reduced Gaussian experiment then describes every fixed rank stratum.
The same experiment, with a deterministic Gaussian shift, describes null
paths that cross a rank interface at the $n^{-1/2}$ scale.  Thus the reduction
identifies precisely where the classical chi-bar-square description survives
and where a lower-stratum nonconvex distance functional replaces it.

The second contribution concerns calibration over the composite null.  Under
isotropy, eigenvalue interlacing and compression yield a pathwise ordering:
the top-stratum statistic dominates every fixed lower stratum and every
admissible local null transition.  Consequently, the top-stratum critical
value is least favourable over this enlarged local null family.  Under
arbitrary anisotropy, we prove the corresponding transition dominance when
the active corank $q-r$ equals one.  We do not establish a general
least-favourable theorem for anisotropic active corank at least two; this is
the main unresolved interface problem.

The third contribution is a sensitivity analysis on the top stratum.  We
first specialize a standard envelope argument to the projection value for a
linearly transformed semidefinite cone.  Under explicit regular-level and
Gaussian-tail conditions, we then obtain a positive-level Hadamard derivative
for the distribution and critical value.  The result is deliberately
conditional: it is not asserted that every linearly transformed semidefinite
cone satisfies the required level-set continuity.  In active dimension two,
explicit chi-bar-square weights give an independent verification of the
quantile derivative.

Gaussian covariance models illustrate the statistical content of the
reduction.  One example shows how profiling an unknown residual variance
changes the active covariance and the chi-bar-square weights.  Further
experiments compare exact finite-sample likelihood ratios with lower-stratum
and rank-transition limits, display an anisotropic nonconvex lower-stratum
law, and verify the orientation derivative and a one-dimensional Armijo
ascent.

Section~\ref{sec:framework} introduces the statistical framework and the
geometric notation.  Section~\ref{sec:stratified} derives the stratified
limits and least-favourable comparisons.  Section~\ref{sec:sensitivity}
develops positive-level sensitivity on the top stratum.
Section~\ref{sec:applications} gives model calculations and numerical
validations, and Section~\ref{sec:discussion} discusses the scope of the
results and the remaining anisotropic interface problem.
Detailed proofs and computational formulas are collected in the
supplementary material following the references; each expanded proof is
identified locally in the main text.

\section{Framework and notation}
\label{sec:framework}

\subsection{Matrix spaces and elementary geometry}
\label{subsec:matrix-geometry}

All spaces are finite dimensional.  Let
\begin{equation}
\mathcal E=\mathbb R^{d_\psi}\oplus\Smat^q,
\qquad
\theta=(\psi,\Sigma),
\label{eq:framework-parameter-space}
\end{equation}
where $\psi\in\mathbb R^{d_\psi}$ is a regular nuisance component and
$\Sigma\in\Smat^q$ is a real symmetric matrix.  For integers $d\ge1$,
$\Smat^d$ denotes the space of real symmetric $d\times d$ matrices,
$\Splus^d$ its positive-semidefinite cone, and $\Smat_{++}^d$ its
positive-definite cone.  On $\Smat^d$ we use the Frobenius inner product and
norm
\begin{equation}
\langle A,B\rangle_F=\tr(AB),
\qquad
\|A\|_F=\langle A,A\rangle_F^{1/2}.
\label{eq:main-Frobenius-conventions}
\end{equation}
The Euclidean--Frobenius product on $\mathcal E$ is
\begin{equation}
\langle(a,A),(b,B)\rangle_{\mathcal E}
=a^\top b+\langle A,B\rangle_F,
\qquad
\|(a,A)\|_{\mathcal E}^2=\|a\|_2^2+\|A\|_F^2.
\label{eq:framework-product-inner-product}
\end{equation}
Direct sums appearing below carry the analogous product inner product.
For a linear operator $\mathcal L$ between Euclidean spaces,
$\mathcal L^*$ denotes its adjoint and $\|\mathcal L\|_{\mathrm{op}}$ its
operator norm.  If $\mathcal J$ is self-adjoint and positive definite, then
\begin{equation}
\|x\|_{\mathcal J}^2=\langle x,\mathcal Jx\rangle.
\label{eq:main-information-norm}
\end{equation}
For such an operator $\mathcal S$, $\mathcal S^{1/2}$ is its unique
self-adjoint positive-definite square root and
$\mathcal S^{-1/2}=(\mathcal S^{1/2})^{-1}$.  The notation
$\mathcal N_E(m,\mathcal V)$ denotes the Gaussian law on a Euclidean space
$E$ with mean $m$ and covariance operator $\mathcal V$; $\Id$ is the
identity operator on the relevant space and $I_d$ the $d\times d$ identity
matrix.  A standard Gaussian matrix $Y\in\Smat^d$ has independent
$N(0,1)$ coordinates in the Frobenius-orthonormal basis $E_{ii}$ and
$F_{ij}=(E_{ij}+E_{ji})/\sqrt2$, $i<j$, where $E_{ij}$ is the matrix unit.
Thus the off-diagonal matrix entries of $Y$ have variance $1/2$.
For a nonempty closed subset $A$ of a Euclidean space with norm
$\|\cdot\|$, define
\begin{equation}
	\dist(x,A)=\inf_{z\in A}\|x-z\|.
	\label{eq:main-distance-definition}
\end{equation}
When the ambient space is a matrix space equipped with the Frobenius norm,
we write
\begin{equation}
	\dist_F(X,A)
	=
	\inf_{Z\in A}\|X-Z\|_F.
\end{equation} If $A$ is closed and convex,
$\Pi_A(x)$ is its unique metric projection.  For a convex cone $C$,
\begin{equation}
C^\circ=\{y:\langle y,z\rangle\le0\ \text{for every }z\in C\}
\label{eq:main-polar-definition}
\end{equation}
is its polar cone; $\partial C$ and $\operatorname{int}(C)$ denote its
boundary and interior.  The Bouligand tangent cone to a set $A$ at $x\in A$
is
\begin{equation}
T_A(x)=
\left\{h:\exists\ t_n\downarrow0,\ x_n\in A
\text{ such that }(x_n-x)/t_n\to h\right\}.
\label{eq:main-Bouligand-definition}
\end{equation}
For nonempty compact sets $A,B$, the Hausdorff distance is
\begin{equation}
d_{\mathrm H}(A,B)=
\max\left\{
\sup_{a\in A}\dist(a,B),
\sup_{b\in B}\dist(b,A)
\right\}.
\label{eq:main-Hausdorff-definition}
\end{equation}
Convergence on bounded sets means Hausdorff convergence after intersection
with every closed bounded ball.

We order eigenvalues as
$\lambda_1(M)\ge\cdots\ge\lambda_d(M)$ and write $x_+=\max(x,0)$.
The orthogonal group is
$\operatorname O(d)=\{O\in\mathbb R^{d\times d}:O^\top O=I_d\}$.
For random variables, $X_n\Rightarrow X$ and
$X_n\xrightarrow{\Pr}X$ denote convergence in distribution and in
probability, and $X\stackrel d=Y$ denotes equality in distribution.  We write
$X\preceq_{\mathrm{st}}Y$ when $\Pr(X>t)\le\Pr(Y>t)$ for every real $t$.
For a distribution function $F$ and $\alpha\in(0,1)$,
\begin{equation}
q_\alpha(F)=F^{-1}(1-\alpha)
:=\inf\{c:F(c)\ge1-\alpha\}
\label{eq:main-quantile-convention}
\end{equation}
is its upper-$\alpha$ critical value.  The symbol $\mathbf1_A$ denotes the
indicator of an event or set $A$.

\subsection{Statistical experiment and rank constraint}
\label{subsec:statistical-framework}

The model is assumed to admit a regular local extension to an open
neighbourhood of the true parameter
$\theta_0=(\psi_0,\Sigma_0)$.  The null and alternative parameter sets are
$\Theta_0\subseteq\Theta_1$, with the restrictions on the matrix component
given by \eqref{eq:intro-hypotheses}.  We write $\Pr_\theta$ for the data law
at parameter $\theta$ and $\ell_n(\theta)$ for the sample log-likelihood.
The likelihood-ratio statistic is
\begin{equation}
\Lambda_n
=2\left\{
\sup_{\theta\in\Theta_1}\ell_n(\theta)
-
\sup_{\theta\in\Theta_0}\ell_n(\theta)
\right\}.
\label{eq:LRT}
\end{equation}
Throughout Sections~\ref{sec:framework} and~\ref{sec:stratified},
$\Sigma_0\succeq0$ has rank $s\le r$.  Its kernel, kernel dimension, and
orthogonal projector are
\begin{equation}
K_s=\ker(\Sigma_0),
\qquad
k_s=q-s,
\qquad
P_s:\mathbb R^q\to K_s.
\label{eq:framework-kernel-notation}
\end{equation}
Choose an orthonormal basis matrix $U_s\in\mathbb R^{q\times k_s}$ for
$K_s$.  The active compression and its adjoint are
\begin{equation}
\mathcal A_{P_s}(H)=U_s^\top H U_s,
\qquad
\mathcal A_{P_s}^*(B)=U_sBU_s^\top.
\label{eq:framework-active-compression}
\end{equation}
The number of additional positive eigenvalues allowed locally is
$m_s=r-s$, and for $0\le m\le k$ we set
\begin{equation}
\mathcal K_m^k
=\{B\in\Splus^k:\rank(B)\le m\}.
\label{eq:framework-rank-cone}
\end{equation}
These definitions isolate the block on which the null and alternative tangent
sets can differ.

\subsection{Uniform LAN and nuisance profiling}
\label{subsec:LAN-profile}

\begin{assumption}[Uniform LAN]
\label{ass:LAN}
There exist random elements $Z_n\in\mathcal E$ and a self-adjoint
positive-definite operator $\mathcal I$ such that, for every compact
$C\subset\mathcal E$,
\begin{equation}
\sup_{h\in C}
\left|
\ell_n(\theta_0+n^{-1/2}h)-\ell_n(\theta_0)
-\langle h,Z_n\rangle
+\frac12\langle h,\mathcal Ih\rangle
\right|
\xrightarrow{\Pr}0,
\label{eq:LAN}
\end{equation}
and $Z_n\Rightarrow Z\sim\mathcal N_{\mathcal E}(0,\mathcal I)$, meaning
that $\mathbb EZ=0$ and
$\operatorname{Cov}\{\langle u,Z\rangle,\langle v,Z\rangle\}
=\langle u,\mathcal Iv\rangle$ for $u,v\in\mathcal E$.
\end{assumption}

Partition $Z=(Z_\psi,Z_\Sigma)$ and $\mathcal I$ according to
$\mathcal E=\mathbb R^{d_\psi}\oplus\Smat^q$, and define
\begin{equation}
Z_\Sigma^{\mathrm{eff}}
=Z_\Sigma-
\mathcal I_{\Sigma\psi}\mathcal I_{\psi\psi}^{-1}Z_\psi,
\qquad
\mathcal I_{\mathrm{eff}}
=\mathcal I_{\Sigma\Sigma}
-
\mathcal I_{\Sigma\psi}\mathcal I_{\psi\psi}^{-1}
\mathcal I_{\psi\Sigma}.
\label{eq:efficient-information}
\end{equation}
Then $\mathcal I_{\mathrm{eff}}$ is positive definite and
$Z_\Sigma^{\mathrm{eff}}\sim
\mathcal N_{\Smat^q}(0,\mathcal I_{\mathrm{eff}})$.  With
\begin{equation}
G=\mathcal I_{\mathrm{eff}}^{-1}Z_\Sigma^{\mathrm{eff}},
\label{eq:G}
\end{equation}
profiling the nuisance direction reduces the Gaussian criterion, up to an
additive random constant, to
$-\frac12\|H-G\|_{\mathcal I_{\mathrm{eff}}}^2$.  This is the standard
Schur-complement calculation; Lemma~\ref{lem:supp-schur-profile} gives the
complete calculation and makes the dependence on the nuisance direction
explicit.

\section{Stratified active-block likelihood-ratio limits}
\label{sec:stratified}

\subsection{Tangent geometry on every rank stratum}

With the kernel notation and active compression introduced in
\eqref{eq:framework-kernel-notation}--\eqref{eq:framework-active-compression},
the tangent cone to the unrestricted PSD alternative
is classical:
\begin{equation}
T_1(P_s)
=
\{H\in\Smat^q:\mathcal A_{P_s}(H)\succeq0\}.
\label{eq:alternative-tangent}
\end{equation}
For the bounded-rank null, the lower strata require an additional rank
restriction.  The underlying tangent geometry is known in variational
analysis.  Formulas for the real determinantal variety were used by
\citet{SchneiderUschmajew2015}; \citet{OlikierMlinaricUschmajew2026}
identified a gap in that proof and supplied complete alternatives, while
\citet{YangGaoYuan2025} gives a unified treatment that includes symmetric and
positive-semidefinite bounded-rank sets.  We state the positive-semidefinite
specialization in the active-block form needed for the likelihood-ratio
reduction and include a self-contained proof based on a Schur complement and
an explicit factor path.

\begin{theorem}[Tangent cone to the bounded-rank PSD null]
\label{thm:null-tangent}
Let $\Sigma_0\succeq0$ have rank $s\le r$.  Then the Bouligand tangent cone
to
$\{\Sigma\succeq0:\rank(\Sigma)\le r\}$ at $\Sigma_0$ is
\begin{equation}
T_{0,s}^{(r)}(P_s)
=
\left\{
H\in\Smat^q:
\mathcal A_{P_s}(H)\succeq0,
\ \rank\{\mathcal A_{P_s}(H)\}\le r-s
\right\}.
\label{eq:null-tangent}
\end{equation}
Equivalently,
\begin{equation}
T_{0,s}^{(r)}(P_s)
=
\{H:\mathcal A_{P_s}(H)\in\mathcal K_{m_s}^{k_s}\},
\qquad m_s=r-s.
\label{eq:null-tangent-short}
\end{equation}
For $s=r$, this reduces to the tangent space
$\{H:\mathcal A_{P_r}(H)=0\}$ of the rank-$r$ stratum.  For $s<r$ the
null tangent cone is nonconvex.
\end{theorem}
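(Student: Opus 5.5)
We prove the two inclusions separately, after putting $\Sigma_0$ in block form. Choose an orthonormal basis $[V_s,U_s]$ of $\mathbb R^q$ with $V_s$ spanning the range of $\Sigma_0$. Then
\[
\Sigma_0=\begin{pmatrix}D&0\\0&0\end{pmatrix},\qquad D\in\Smat^s_{++},
\]
and every $H$ is written as $\begin{pmatrix}H_{11}&H_{12}\\H_{12}^\top&H_{22}\end{pmatrix}$ with $H_{22}=\mathcal A_{P_s}(H)$. Since the Bouligand tangent cone is invariant under this orthogonal change of basis, it suffices to work in these coordinates.

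\emph{Inclusion ``$\subseteq$''.} Let $\Sigma_n=\Sigma_0+t_nH_n$ with $\Sigma_n\succeq0$, $\rank\Sigma_n\le r$, $t_n\downarrow0$ and $H_n\to H$. For large $n$ the upper-left block $D+t_nH_{n,11}$ is positive definite. Hence the rank of $\Sigma_n$ equals $s$ plus the rank of its Schur complement
\[
S_n=t_nH_{n,22}-t_n^2H_{n,12}^\top(D+t_nH_{n,11})^{-1}H_{n,12},
\]
and positive semidefiniteness of $\Sigma_n$ is equivalent to $S_n\succeq0$. Therefore $S_n/t_n\succeq0$ with $\rank(S_n/t_n)\le r-s$, and $S_n/t_n\to H_{22}$. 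The set $\mathcal K_{m_s}^{k_s}$ is closed, because the PSD cone is closed and rank is lower semicontinuous. Hence $H_{22}\in\mathcal K_{m_s}^{k_s}$.

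\emph{Inclusion ``$\supseteq$''.} Given $H$ with $H_{22}\succeq0$ and $\rank H_{22}\le m_s$, we build an explicit factor path. Factor $H_{22}=WW^\top$ with $W\in\mathbb R^{k_s\times m_s}$. Put $L=\begin{pmatrix}D^{1/2}\\0\end{pmatrix}$, choose $X$ with $D^{1/2}X^\top=H_{12}$ and $D^{1/2}Y^\top+YD^{1/2}=H_{11}$ (a solvable Lyapunov equation, since $D\succ0$), and define
\[
F(t)=\begin{pmatrix}D^{1/2}+tY & 0\\ tX & \sqrt t\,W\end{pmatrix},\qquad \Sigma(t)=F(t)F(t)^\top .
\]
Each $\Sigma(t)$ is PSD with rank at most $s+m_s=r$. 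Expanding,
\[
\Sigma(t)=\Sigma_0+t\begin{pmatrix}H_{11}&H_{12}\\H_{12}^\top&WW^\top\end{pmatrix}+O(t^2),
\]
so $(\Sigma(t)-\Sigma_0)/t\to H$. The $\sqrt t$ scaling on $W$ is what makes the new eigenvalues enter at first order in $t$. We expect this construction to be the main obstacle, because it must match all three blocks simultaneously while staying in the rank budget. The Lyapunov equation resolves the $H_{11}$ block; if $X$ is defined as $H_{12}^\top D^{-1/2}$, the off-diagonal block works out directly.

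\emph{Special cases.} For $s=r$ we have $m_s=0$, so $\mathcal K_0^{k_s}=\{0\}$ and the cone becomes $\{H:\mathcal A_{P_r}(H)=0\}$. For $s<r$ we have $m_s\ge1$ and $k_s=q-s\ge m_s+1$. Take two orthogonal rank-$m_s$ projectors $B_1,B_2\in\mathcal K_{m_s}^{k_s}$ whose ranges together span a subspace of dimension $m_s+1$; for example, let their ranges be spanned by $e_1,\dots,e_{m_s}$ and by $e_2,\dots,e_{m_s+1}$. Then $(B_1+B_2)/2$ has rank $m_s+1$. The lifts $\mathcal A_{P_s}^*(B_i)$ lie in the cone, but their midpoint does not, so the cone is nonconvex.
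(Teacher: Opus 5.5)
Your proof is correct and follows the paper's argument essentially step for step. Necessity uses the Schur-complement rank count together with closedness of $\mathcal K_{m_s}^{k_s}$. Sufficiency uses the factor path with a Lyapunov solve for the $(1,1)$ block, $X=H_{12}^\top D^{-1/2}$, and $\sqrt t\,W$ in the kernel block. Your explicit two-projector example for nonconvexity, which uses $k_s-m_s=q-r\ge1$, usefully supplies a step that the paper's proof only asserts.
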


\begin{proof}
Choose an orthogonal basis adapted to
$\operatorname{range}(\Sigma_0)\oplus K_s$, so that
\begin{equation}
\Sigma_0=
\begin{pmatrix}
\Lambda&0\\0&0
\end{pmatrix},
\qquad
\Lambda\in\Smat_{++}^s,
\qquad
H=
\begin{pmatrix}
A&B\\B^{\top}&C
\end{pmatrix}.
\label{eq:block-tangent-proof}
\end{equation}
We first prove necessity.  Let $t_n\downarrow0$ and
$H_n\to H$ be such that
$\Sigma_n=\Sigma_0+t_nH_n\succeq0$ and
$\rank(\Sigma_n)\le r$.  For $n$ large, the upper-left block
$\Lambda+t_nA_n$ is positive definite.  The Schur complement
\begin{equation}
R_n
=
t_nC_n-t_n^2B_n^{\top}
(\Lambda+t_nA_n)^{-1}B_n
\label{eq:Schur-tangent}
\end{equation}
is positive semidefinite and satisfies
\begin{equation}
\rank(R_n)
=
\rank(\Sigma_n)-s
\le r-s.
\end{equation}
Dividing \eqref{eq:Schur-tangent} by $t_n$ gives
\begin{equation}
C_n-t_nB_n^{\top}
(\Lambda+t_nA_n)^{-1}B_n
\longrightarrow C.
\end{equation}
The set of positive-semidefinite matrices of rank at most $r-s$ is closed;
hence $C\succeq0$ and $\rank(C)\le r-s$.

For sufficiency, suppose that $C\succeq0$ and
$\rank(C)=m\le r-s$.  Write $C=WW^{\top}$ with
$W\in\mathbb R^{k_s\times m}$.  Choose
$E\in\mathbb R^{s\times s}$ solving
\begin{equation}
\Lambda^{1/2}E^{\top}+E\Lambda^{1/2}=A,
\label{eq:E-equation}
\end{equation}
and set $F=B^{\top}\Lambda^{-1/2}$.  Define the factor matrix
\begin{equation}
X(t)
=
\left[
\begin{array}{cc}
\Lambda^{1/2}+tE & 0\\
tF & \sqrt t\,W
\end{array}
\right].
\label{eq:factor-path}
\end{equation}
Then $\Sigma(t)=X(t)X(t)^{\top}$ is positive semidefinite and
$\rank\{\Sigma(t)\}\le s+m\le r$.  Expanding
\eqref{eq:factor-path} gives
\begin{equation}
\Sigma(t)
=
\Sigma_0+t
\begin{pmatrix}
A&B\\B^{\top}&C
\end{pmatrix}
+O(t^2).
\end{equation}
Thus $H$ belongs to the Bouligand tangent cone.  Since the lower-right
block is precisely $\mathcal A_{P_s}(H)$, the claimed formula follows.
\end{proof}

The proof above is complete.  Theorem~\ref{thm:supp-null-tangent} in the
supplement restates the same tangent-cone identity with the block argument
expanded.  Proposition~\ref{prop:supp-local-Hausdorff} proves the additional,
strictly stronger uniform local-Hausdorff approximation used below to verify
the set-convergence part of Assumption~\ref{ass:local}; that uniform property
does not follow merely by restating the pointwise tangent-cone formula.

Theorem~\ref{thm:null-tangent} identifies the precise obstruction to a
single chi-bar-square calibration over the full null.  Only the top stratum
has a linear null tangent.  Lower strata compare a convex alternative cone
to a nested nonconvex cone.

\begin{assumption}[Local set convergence and localisation]
\label{ass:local}
The rescaled null and alternative parameter sets converge on bounded sets to
$\mathbb R^{d_\psi}\oplus T_{0,s}^{(r)}(P_s)$ and
$\mathbb R^{d_\psi}\oplus T_1(P_s)$, respectively, and the corresponding
local constrained maximisers are stochastically bounded.
\end{assumption}

For the direct semidefinite rank constraint, the required local Hausdorff
convergence is proved in Proposition~\ref{prop:supp-local-Hausdorff} of the supplementary material by a uniform version of the factor construction in
Theorem~\ref{thm:null-tangent}.  This is consistent with the general
Chernoff regularity of semi-algebraic hypotheses established by
\citet{Drton2009}.  We retain the assumption in the main theorem to permit
smooth model embeddings and to keep probabilistic and geometric conditions
separate.

\subsection{Stratified canonical reduction}

The adjoint of $\mathcal A_{P_s}$ is
$\mathcal A_{P_s}^{*}(B)=U_sBU_s^{\top}$.  Define the reduced active
covariance
\begin{equation}
\mathcal S_{P_s}
=
\mathcal A_{P_s}\mathcal I_{\mathrm{eff}}^{-1}
\mathcal A_{P_s}^{*}.
\label{eq:SPs}
\end{equation}
It is self-adjoint and positive definite on $\Smat^{k_s}$.  If
$B_s=\mathcal A_{P_s}(G)$, then
$B_s\sim\mathcal N_{\Smat^{k_s}}(0,\mathcal S_{P_s})$ and
\begin{equation}
\inf_{\mathcal A_{P_s}(H)=B}
\|H-G\|_{\mathcal I_{\mathrm{eff}}}^2
=
\left\langle
B-B_s,\mathcal S_{P_s}^{-1}(B-B_s)
\right\rangle.
\label{eq:quotient}
\end{equation}
This quotient identity profiles every off-kernel block and is the same on
all strata.

For the likelihood-ratio statistic $\Lambda_n$ defined in
\eqref{eq:LRT}, define
\begin{equation}
\mathcal C_{P_s}
=
\mathcal S_{P_s}^{-1/2}(\Splus^{k_s}),
\qquad
\mathcal D_{P_s,m_s}
=
\mathcal S_{P_s}^{-1/2}(\mathcal K_{m_s}^{k_s}).
\label{eq:CD}
\end{equation}
The first set is a closed convex cone; the second is a closed, generally
nonconvex, subcone.

\begin{theorem}[Stratified active-block limit]
\label{thm:stratified-limit}
Under Assumptions~\ref{ass:LAN} and \ref{ass:local}, if
$\rank(\Sigma_0)=s\le r$, then
\begin{equation}
\Lambda_n
\Rightarrow
\Delta_{s,r}(P_s)
:=
\dist_F^2(Y,\mathcal D_{P_s,m_s})
-
\dist_F^2(Y,\mathcal C_{P_s}),
\qquad
Y\sim\mathcal N_{\Smat^{k_s}}(0,\Id).
\label{eq:stratified-limit}
\end{equation}
The law is independent of the selected orthonormal basis of $K_s$.
Here and below, a standard Gaussian element of $\Smat^{k_s}$ has
independent $N(0,1)$ coefficients in the Frobenius-orthonormal basis
$E_{ii}$ and $F_{ij}=(E_{ij}+E_{ji})/\sqrt2$; consequently, its matrix
off-diagonal entries have variance $1/2$.
\end{theorem}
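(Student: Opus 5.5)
The argument follows the Chernoff--Shapiro route in three reductions: a general conic limit for $\Lambda_n$, then the nuisance profile, then the quotient \eqref{eq:quotient}, ending with a whitening of the active block.

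\textbf{Step 1 (generic conic limit).} Write $\theta=\theta_0+n^{-1/2}h$. By Assumption~\ref{ass:LAN}, uniformly on compacts,
\[
\ell_n(\theta_0+n^{-1/2}h)-\ell_n(\theta_0)=\tfrac12\|\mathcal I^{-1}Z_n\|_{\mathcal I}^2-\tfrac12\|h-\mathcal I^{-1}Z_n\|_{\mathcal I}^2+o_p(1).
\]
Assumption~\ref{ass:local} says the local maximisers are $O_p(1)$, so each supremum in \eqref{eq:LRT} can be restricted to a large ball. The rescaled sets converge on bounded sets to $\mathbb R^{d_\psi}\oplus T_{0,s}^{(r)}(P_s)$ and $\mathbb R^{d_\psi}\oplus T_1(P_s)$. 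The map $(x,A)\mapsto\inf_{h\in A\cap\text{ball}}\|h-x\|^2_{\mathcal I}$ is continuous jointly in $x$ and in Hausdorff distance. The continuous mapping theorem then gives
\[
\Lambda_n\Rightarrow \inf_{h\in\mathbb R^{d_\psi}\oplus T_0}\|h-\mathcal I^{-1}Z\|_{\mathcal I}^2-\inf_{h\in\mathbb R^{d_\psi}\oplus T_1}\|h-\mathcal I^{-1}Z\|_{\mathcal I}^2 .
\]
The two infima are attained because the cones are closed. The truncation step, showing the ball-restricted infimum equals the global one with probability tending to one, is the standard technical point. I expect it to be the main care needed, though it follows from stochastic boundedness of the maximisers together with tightness of $Z_n$. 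This is also where nonconvexity of $T_0$ could cause trouble. It does not, because only distances (not projections) enter, and distance functions are $1$-Lipschitz for arbitrary closed sets.

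\textbf{Step 2 (profiling).} Minimise out the unrestricted $\psi$-coordinate using Lemma~\ref{lem:supp-schur-profile}. Both infima reduce, up to the same additive random constant, to $\inf_{H\in T}\|H-G\|^2_{\mathcal I_{\mathrm{eff}}}$ with $G$ as in \eqref{eq:G}, so the constant cancels in the difference. By Theorem~\ref{thm:null-tangent} and \eqref{eq:alternative-tangent}, each cone has the form $\{H:\mathcal A_{P_s}(H)\in K\}$ with $K=\mathcal K^{k_s}_{m_s}$ or $K=\Splus^{k_s}$. Write each infimum as $\inf_{B\in K}\inf_{\mathcal A_{P_s}(H)=B}$ and apply \eqref{eq:quotient}. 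I would verify \eqref{eq:quotient} itself as a minimum-norm problem under a linear constraint. Since $\mathcal A_{P_s}$ is surjective, the minimiser is $H=G+\mathcal I_{\mathrm{eff}}^{-1}\mathcal A^*_{P_s}\mathcal S_{P_s}^{-1}(B-B_s)$, and the minimal value is $\|B-B_s\|^2_{\mathcal S_{P_s}^{-1}}$. This gives
\[
\inf_{B\in K}\langle B-B_s,\mathcal S_{P_s}^{-1}(B-B_s)\rangle .
\]

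\textbf{Step 3 (whitening and invariance).} Set $Y=\mathcal S_{P_s}^{-1/2}B_s$. Since $B_s=\mathcal A_{P_s}(G)$ is Gaussian with covariance $\mathcal A_{P_s}\mathcal I_{\mathrm{eff}}^{-1}\mathcal A^*_{P_s}=\mathcal S_{P_s}$, we get $Y\sim\mathcal N(0,\Id)$. The quadratic form becomes $\|\mathcal S_{P_s}^{-1/2}B-Y\|_F^2$, and its infimum over $B\in K$ equals $\dist_F^2(Y,\mathcal S_{P_s}^{-1/2}K)$. With \eqref{eq:CD} this yields \eqref{eq:stratified-limit}.

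For basis independence, replace $U_s$ by $U_sO$ with $O\in\operatorname O(k_s)$. This conjugates $\mathcal A_{P_s}$ by the orthogonal map $\mathcal R_O:B\mapsto O^\top BO$ on $\Smat^{k_s}$. Then $\mathcal S_{P_s}$ becomes $\mathcal R_O\mathcal S_{P_s}\mathcal R_O^*$, and its inverse square root conjugates the same way. Both $\Splus^{k_s}$ and $\mathcal K^{k_s}_{m_s}$ are invariant under $\mathcal R_O$. Hence both cones in \eqref{eq:CD} are mapped by $\mathcal R_O$, an isometry, and the standard Gaussian law of $Y$ is invariant under it, so the law of $\Delta_{s,r}(P_s)$ is unchanged.
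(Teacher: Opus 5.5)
Your proposal is correct and follows essentially the same route as the paper. The steps match: localisation to a ball; uniform LAN together with local Hausdorff convergence, giving the Gaussian difference of infima over $\mathbb R^{d_\psi}\oplus T_{0,s}^{(r)}(P_s)$ and $\mathbb R^{d_\psi}\oplus T_1(P_s)$; Schur-complement profiling; the affine quotient with minimiser $H_B$; whitening by $\mathcal S_{P_s}^{-1/2}$; and the $\mathcal Q_O$ conjugation argument for basis independence. The only technical difference is that you pass to the limit through an extended continuous-mapping argument based on the Lipschitz dependence of distances on the Hausdorff metric, whereas the paper applies a Skorohod representation to the pair $(Z_n,R_{n,R})$ together with a moving-suprema lemma (Proposition~\ref{prop:supp-supremum-limit}); the two devices are interchangeable here.
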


\begin{proof}
By Assumption~\ref{ass:local}, the constrained local maximisers may be
restricted, with arbitrarily large probability, to a deterministic compact
ball.  On that ball the remainder in \eqref{eq:LAN} is uniformly negligible.
Local convergence of the rescaled parameter sets and strict concavity of the
Gaussian criterion therefore give
\begin{equation}
\Lambda_n
\Rightarrow
\inf_{H\in T_{0,s}^{(r)}(P_s)}
\|H-G\|_{\mathcal I_{\mathrm{eff}}}^2
-
\inf_{H\in T_1(P_s)}
\|H-G\|_{\mathcal I_{\mathrm{eff}}}^2.
\label{eq:pre-active-limit}
\end{equation}
The joint convergence of the two suprema is detailed in Proposition~\ref{prop:supp-supremum-limit} of the supplementary material.

For completeness, the affine profile underlying \eqref{eq:quotient} has
minimiser
\begin{equation}
H_B
=
G+
\mathcal I_{\mathrm{eff}}^{-1}
\mathcal A_{P_s}^{*}
\mathcal S_{P_s}^{-1}(B-B_s).
\label{eq:HB-main}
\end{equation}
Indeed, $\mathcal A_{P_s}(H_B)=B$, and
$H_B-G$ is orthogonal in the $\mathcal I_{\mathrm{eff}}$ metric to
$\ker(\mathcal A_{P_s})$.  Substitution gives \eqref{eq:quotient}.
Applying it to the two sets in Theorem~\ref{thm:null-tangent} transforms
\eqref{eq:pre-active-limit} into
\begin{equation}
\begin{split}
&\inf_{B\in\mathcal K_{m_s}^{k_s}}
\left\langle B-B_s,
\mathcal S_{P_s}^{-1}(B-B_s)\right\rangle\\
&\hspace{2cm}-
\inf_{B\in\Splus^{k_s}}
\left\langle B-B_s,
\mathcal S_{P_s}^{-1}(B-B_s)\right\rangle.
\end{split}
\label{eq:active-two-distances}
\end{equation}
Since $B_s$ has covariance $\mathcal S_{P_s}$,
$Y=\mathcal S_{P_s}^{-1/2}B_s$ is standard Gaussian.  The change of
variable $C=\mathcal S_{P_s}^{-1/2}B$ turns
\eqref{eq:active-two-distances} into \eqref{eq:stratified-limit}.

Finally, if $U_s$ is replaced by $U_sO$ with $O\in\operatorname O(k_s)$,
the compression is conjugated by
$\mathcal Q_O(B)=O^{\top}BO$.  Both sets in \eqref{eq:CD} are therefore
mapped by the same orthogonal transformation, and the standard Gaussian law
is invariant under $\mathcal Q_O$.  The limiting distribution is basis
independent.
\end{proof}

The display above gives the geometric part of the proof.  The detailed
probabilistic passage from uniform LAN to the joint pair of constrained
Gaussian suprema is Proposition~\ref{prop:supp-supremum-limit}; nuisance
profiling and the affine active-block profile are expanded in
Lemmas~\ref{lem:supp-schur-profile} and~\ref{lem:supp-affine-profile}.
Together these results supply the full proof of
Theorem~\ref{thm:stratified-limit}.

Theorem~\ref{thm:stratified-limit} is the central reduction.  It separates
the model-specific derivation of $\mathcal I_{\mathrm{eff}}$ and $P_s$ from
a universal active-space problem.  It also shows why the top and lower
strata must be distinguished.

\subsection{Local alternatives and null rank transitions}

The active representation also gives the local power experiment without any
additional model-specific calculation.  Consider the local sequence
\begin{equation}
\theta_n
=
\theta_0+n^{-1/2}(a,H),
\label{eq:local-alternative}
\end{equation}
inside the ambient regular model, where $a\in\mathbb R^{d_\psi}$ and
$H\in\Smat^q$ are fixed nuisance and matrix directions.  Under the LAN
assumption this is a contiguous sequence of experiments.  Le Cam's third lemma shifts the mean of
the efficient score by $\mathcal I_{\mathrm{eff}}H$; the nuisance component
$a$ cancels from this mean.  Hence $G$ in \eqref{eq:G} has limiting mean
$H$, the active block $B_s$ has mean $\mathcal A_{P_s}(H)$, and the whitened
mean is
\begin{equation}
\mu_s(H)
=
\mathcal S_{P_s}^{-1/2}\mathcal A_{P_s}(H).
\label{eq:local-mean}
\end{equation}

\begin{corollary}[Stratified local-alternative limit]
\label{cor:local-alternative}
Under \eqref{eq:local-alternative}, the limit in
Theorem~\ref{thm:stratified-limit} remains valid with
\begin{equation}
Y\sim\mathcal N_{\Smat^{k_s}}\{\mu_s(H),\Id\}.
\label{eq:noncentral-stratified}
\end{equation}
In particular, the asymptotic power against $H$ at critical value $c$ is
\begin{equation}
\Pr\left[
\dist_F^2\{Y,\mathcal D_{P_s,m_s}\}
-
\dist_F^2\{Y,\mathcal C_{P_s}\}>c
\right].
\label{eq:local-power}
\end{equation}
\end{corollary}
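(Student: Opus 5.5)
The plan is to rerun the proof of Theorem~\ref{thm:stratified-limit} under the local sequence \eqref{eq:local-alternative}, changing only the law of the limiting score. Let $Q_n$ denote the law at $\theta_n$ and $P_n$ the law at $\theta_0$.

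\textbf{Step 1 (contiguity).} By Assumption~\ref{ass:LAN} with compact set $C=\{(a,H)\}$,
\[
\log(dQ_n/dP_n)=\langle (a,H),Z_n\rangle-\tfrac12\|(a,H)\|_{\mathcal I}^2+o_{P_n}(1).
\]
Since $Z_n\Rightarrow\mathcal N_{\mathcal E}(0,\mathcal I)$, the log-likelihood ratio is asymptotically $N(-\tfrac12\sigma^2,\sigma^2)$ with $\sigma^2=\|(a,H)\|_{\mathcal I}^2$. Le Cam's first lemma then gives mutual contiguity, so every $o_{P_n}(1)$ term is also $o_{Q_n}(1)$.

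\textbf{Step 2 (Le Cam's third lemma).} Under $P_n$, the pair $(Z_n,\log dQ_n/dP_n)$ is jointly asymptotically Gaussian. The covariance of $Z_n$ with the log-ratio is $\mathcal I(a,H)$. Hence, under $Q_n$,
\[
Z_n\Rightarrow\mathcal N_{\mathcal E}\{\mathcal I(a,H),\mathcal I\}.
\]
Substituting into \eqref{eq:efficient-information}, the mean of $Z_\Sigma^{\mathrm{eff}}$ becomes
\[
\mathcal I_{\Sigma\psi}a+\mathcal I_{\Sigma\Sigma}H-\mathcal I_{\Sigma\psi}\mathcal I_{\psi\psi}^{-1}(\mathcal I_{\psi\psi}a+\mathcal I_{\psi\Sigma}H)=\mathcal I_{\mathrm{eff}}H.
\]
The nuisance direction $a$ cancels, and the covariance is unchanged. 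So $G$ has mean $H$, and $B_s=\mathcal A_{P_s}(G)$ has mean $\mathcal A_{P_s}(H)$ with covariance $\mathcal S_{P_s}$. Consequently $Y=\mathcal S_{P_s}^{-1/2}B_s\sim\mathcal N\{\mu_s(H),\Id\}$.

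\textbf{Step 3 (functional limit).} The deterministic part of the proof of Theorem~\ref{thm:stratified-limit} carries over verbatim. This covers the local set convergence, the uniform negligibility of the LAN remainder on compacts (transferred to $Q_n$ by contiguity), and the reduction \eqref{eq:pre-active-limit}--\eqref{eq:active-two-distances}. In that proof $\Lambda_n=\Phi(Z_n)+o_{P_n}(1)$ for a map $\Phi$ that is continuous: it is a difference of squared distances to closed sets, and each squared distance is continuous in $G$. The continuous mapping theorem under $Q_n$ then gives the limit with the shifted $Y$.

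The power formula \eqref{eq:local-power} follows at every continuity point $c$ of the limit law.

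\textbf{Main obstacle.} The delicate point is stochastic boundedness of the constrained local maximisers under $Q_n$, which Assumption~\ref{ass:local} asserts only under $\theta_0$. I would handle it by contiguity: a sequence that is $O_{P_n}(1)$ is $O_{Q_n}(1)$, since contiguity preserves tightness. I would also verify that the joint-convergence argument of Proposition~\ref{prop:supp-supremum-limit} uses only the representation through $Z_n$ plus $o_{P}(1)$ terms, so it transfers without further change.
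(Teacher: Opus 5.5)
Your proposal is correct and follows essentially the same route as the paper. Both arguments get contiguity from LAN and apply Le Cam's third lemma, which shifts the score mean by $\mathcal I(a,H)$. The nuisance direction $a$ then cancels in the efficient score, so $G$ has mean $H$ and $Y$ has mean $\mu_s(H)$. The geometric reduction of Theorem~\ref{thm:stratified-limit} is reused unchanged. The paper leaves several details implicit, and your version fills them in: the transfer of the LAN remainder and of localisation to $\Pr_{\theta_n}$ by contiguity, and the continuity-point qualification on $c$ in the power formula.
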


The score shift and nuisance cancellation used in this corollary are derived
in Subsection~\ref{subsec:supp-local-transitions} of the supplement.

On the top stratum this is a noncentral conic projection law.  On lower
strata it is a translated nonconvex distance-difference law.  Formula
\eqref{eq:local-mean} shows again that the original model enters local power
through the same reduced covariance and active compression as null
calibration.

A local alternative need not leave the null.  This observation turns the
preceding power calculation into a transition theorem at a rank interface.

\begin{corollary}[Null rank-transition limit]
\label{cor:null-transition}
Let $t\mapsto\theta(t)\in\Theta_0$ be a null path such that
\begin{equation}
\theta(t)=\theta_0+t(a,H)+o(t),
\qquad t\downarrow0,
\label{eq:null-transition-path}
\end{equation}
and put $\theta_n=\theta(n^{-1/2})$.  Under $\Pr_{\theta_n}$,
\begin{equation}
\Lambda_n\Rightarrow
\Delta_{s,r}^{\mathrm{tr}}(P_s;H)
:=
\dist_F^2\{Y+\mu_s(H),\mathcal D_{P_s,m_s}\}
-
\dist_F^2\{Y+\mu_s(H),\mathcal C_{P_s}\},
\label{eq:null-transition-limit}
\end{equation}
where $Y$ is standard Gaussian and $\mu_s(H)$ is given by
\eqref{eq:local-mean}.  Necessarily
$\mathcal A_{P_s}(H)\in\mathcal K_{m_s}^{k_s}$; conversely, every element of
$\mathcal K_{m_s}^{k_s}$ occurs as the active first-order increment of a
null path.
\end{corollary}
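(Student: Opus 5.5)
The corollary has three claims: the limit law along the null path, the necessity $\mathcal A_{P_s}(H)\in\mathcal K_{m_s}^{k_s}$, and the realisability of every element of $\mathcal K_{m_s}^{k_s}$ by a null path. I would prove them in that order, treating the first as a special case of Corollary~\ref{cor:local-alternative} and the last two as consequences of Theorem~\ref{thm:null-tangent}.

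\emph{Limit law.} Put $h=(a,H)$ and $\theta_n=\theta(n^{-1/2})=\theta_0+n^{-1/2}h_n$ with $h_n\to h$, since the $o(t)$ term divided by $t=n^{-1/2}$ vanishes.

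\begin{itemize}
\item \emph{Reduction to the fixed direction.} Uniform LAN on compacts (Assumption~\ref{ass:LAN}) gives
\[
\ell_n(\theta_n)-\ell_n(\theta_0)=\langle h,Z_n\rangle-\tfrac12\langle h,\mathcal I h\rangle+o_{\Pr}(1),
\]
because $\langle h_n-h,Z_n\rangle=o_{\Pr}(1)$ by tightness of $Z_n$. So the log-likelihood ratio of $\Pr_{\theta_n}$ against $\Pr_{\theta_0}$ has the same $\mathcal N(-\tfrac12\|h\|_{\mathcal I}^2,\|h\|_{\mathcal I}^2)$ limit as for the fixed sequence \eqref{eq:local-alternative}.
\item \emph{Contiguity and third lemma.} This gives contiguity. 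Joint convergence of $(Z_n,\text{log-ratio})$ under $\Pr_{\theta_0}$ then lets Le Cam's third lemma shift $Z$ by $\mathcal I h$.
\item \emph{Transport of the reduction.} By contiguity, the $o_{\Pr}(1)$ remainders, the stochastic boundedness of local maximisers, and the convergence of $\Lambda_n$ to the continuous functional of $G$ in Theorem~\ref{thm:stratified-limit} all carry over to $\Pr_{\theta_n}$.
\item \emph{Identify the shift.} As in Corollary~\ref{cor:local-alternative}, $G$ gets mean $H$ and the nuisance component cancels, since $\mathcal I_{\Sigma\psi}a-\mathcal I_{\Sigma\psi}\mathcal I_{\psi\psi}^{-1}\mathcal I_{\psi\psi}a=0$. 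Hence $B_s$ has mean $\mathcal A_{P_s}(H)$, the whitened variable is $Y+\mu_s(H)$, and substituting into \eqref{eq:stratified-limit} yields \eqref{eq:null-transition-limit}.
\end{itemize}

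\emph{Necessity.} Because $\theta(t)\in\Theta_0$, the matrix component satisfies $\Sigma(t)\succeq0$, $\rank\Sigma(t)\le r$, and $(\Sigma(t)-\Sigma_0)/t\to H$. So $H$ lies in the Bouligand tangent cone at $\Sigma_0$, and Theorem~\ref{thm:null-tangent} gives $\mathcal A_{P_s}(H)\in\mathcal K_{m_s}^{k_s}$. The necessity part of that proof only used sequences, so it applies directly.

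\emph{Converse.} Given $C\in\mathcal K_{m_s}^{k_s}$, take $H=\mathcal A_{P_s}^*(C)=U_sCU_s^\top$, which has $A=B=0$ in the adapted basis. Use the factor path \eqref{eq:factor-path} with $E=0$, $F=0$ and $C=WW^\top$, and set $\psi(t)=\psi_0$. This gives exactly $\Sigma(t)=\Sigma_0+tH$, which is PSD with rank at most $s+m_s=r$, so it is a null path with active increment $C$.

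The only subtle step is confirming that Assumption~\ref{ass:local}'s localisation transfers under the moving measures, and contiguity handles it. I would also check that $\Theta_0$ contains $(\psi_0,\Sigma(t))$ for small $t$. This holds because $\Theta_0$ restricts only the matrix component, via \eqref{eq:intro-hypotheses}, and $\psi_0$ is an interior value of the regular nuisance parameter.
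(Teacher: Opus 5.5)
Your proposal is correct and follows the paper's own argument. The limit comes from the LAN expansion along the path and Le Cam's third lemma, with the constrained sets still centred at $\theta_0$; necessity comes from the tangent-cone theorem; and the converse comes from the factor path, where your choice $E=F=0$ (giving the exact null path $\Sigma_0+tU_sCU_s^\top$) is simply its most elementary instance. You also spell out details the paper leaves implicit, namely the reduction from $h_n\to h$ to the fixed direction and the contiguity transfer of the LAN remainder and of the stochastic boundedness of the maximisers.
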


Subsection~\ref{subsec:supp-local-transitions} gives the detailed Le Cam
argument for this translated experiment and proves that the admissible active
drifts are exactly the elements of $\mathcal K_{m_s}^{k_s}$.

The last assertion follows from the factor path used in
Theorem~\ref{thm:null-tangent}.  Off-kernel first-order blocks may change the
path but disappear after active profiling.  Thus the transition family is
indexed canonically by the admissible active drift.

\subsection{Top-stratum chi-bar-square law}

If $s=r$, then $m_s=0$ and
$\mathcal D_{P_r,0}=\{0\}$.  Moreau's decomposition \citep{Moreau1962} gives
\begin{equation}
\Delta_{r,r}(P_r)
=
\|\Pi_{\mathcal C_{P_r}}(Y)\|_F^2.
\label{eq:top-projection}
\end{equation}
Let $d_r=(q-r)(q-r+1)/2$.

\begin{corollary}[Top-stratum chi-bar-square representation]
\label{cor:chibar}
At a rank-$r$ null point,
\begin{equation}
\Pr\{\Delta_{r,r}(P_r)\le c\}
=
\sum_{j=0}^{d_r}
v_j(\mathcal C_{P_r})F_{\chi_j^2}(c),
\qquad c\ge0,
\label{eq:chibar}
\end{equation}
where $F_{\chi_j^2}$ is the distribution function of a chi-square
variable with $j$ degrees of freedom, $\chi_0^2$ is the point mass at zero,
and $v_0(C),\ldots,v_{d_r}(C)$ are the nonnegative conic intrinsic-volume
weights of a closed convex cone $C$; they sum to one and determine its
chi-bar-square distribution.  This is the conic Steiner representation of
\citet{McCoyTropp2014}.
\end{corollary}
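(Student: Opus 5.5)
The plan is to start from the top-stratum specialisation of Theorem~\ref{thm:stratified-limit} and then apply the conic Steiner formula of \citet{McCoyTropp2014}.

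First I identify the limit. For $s=r$ we have $m_r=0$, so $\mathcal K_0^{k_r}=\{0\}$ and $\mathcal D_{P_r,0}=\{0\}$. Hence $\dist_F^2(Y,\mathcal D_{P_r,0})=\|Y\|_F^2$. The set $\mathcal C_{P_r}=\mathcal S_{P_r}^{-1/2}(\Splus^{k_r})$ is the image of a closed convex cone under an invertible linear map, so it is itself a closed convex cone in $\Smat^{k_r}$. Moreau's decomposition then gives
\[
Y=\Pi_{\mathcal C_{P_r}}(Y)+\Pi_{\mathcal C_{P_r}^\circ}(Y),
\qquad
\bigl\langle \Pi_{\mathcal C_{P_r}}(Y),\Pi_{\mathcal C_{P_r}^\circ}(Y)\bigr\rangle_F=0 .
\]
Consequently $\|Y\|_F^2-\dist_F^2(Y,\mathcal C_{P_r})=\|\Pi_{\mathcal C_{P_r}}(Y)\|_F^2$, which is \eqref{eq:top-projection}. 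The ambient dimension is $\dim\Smat^{k_r}=k_r(k_r+1)/2=d_r$, and $Y$ is standard Gaussian on this Euclidean space.

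Second, I invoke the Gaussian Steiner/chi-bar-square identity. For a closed convex cone $C$ in a $d$-dimensional Euclidean space and a standard Gaussian $Y$, the variable $\|\Pi_C(Y)\|^2$ is distributed as a mixture $\sum_j v_j(C)\chi^2_j$. The weights $v_j(C)$ are the conic intrinsic volumes, which are nonnegative and sum to one. If a self-contained argument is preferred to citing \citet{McCoyTropp2014}, I would sketch it as follows.

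\begin{itemize}
\item Decompose $\mathbb R^d$ by the dimension $j$ of the face $F$ of $C$ whose relative interior contains $\Pi_C(Y)$.
\item On the event that $\Pi_C(Y)\in\operatorname{relint}F$, the point $Y$ lies in $\operatorname{relint}F+N_F$, where $N_F$ is the normal cone at that face. This region is a product of a set in $\operatorname{span}F$ and a set in its orthogonal complement.
\item Rotation invariance and independence of the Gaussian components in $\operatorname{span}F$ and $(\operatorname{span}F)^\perp$ then give $\Pr\bigl(\|\Pi_C Y\|^2\le c,\ \text{face dim}=j\bigr)=v_j(C)F_{\chi_j^2}(c)$. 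The reason is that, conditionally on the face, $\Pi_CY$ is the $\operatorname{span}F$-component of $Y$ restricted to $F$. Its squared norm is $\chi^2_j$, independent of the event that the direction of that component lies in $F$ and the orthogonal part lies in $N_F$.
\item For polyhedral cones this is exact. For the non-polyhedral $\mathcal C_{P_r}$ I would pass to the limit through polyhedral approximations in the Hausdorff metric on bounded sets. Both sides are continuous in this metric: the projection is 1-Lipschitz in the cone, and intrinsic volumes are weakly continuous, as established by McCoy--Tropp.
\end{itemize}

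Summing over $j=0,\dots,d_r$ yields \eqref{eq:chibar} for $c\ge0$. The case $j=0$ contributes the atom at zero, $\Pr(Y\in\mathcal C^\circ_{P_r})=v_0$.

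The main obstacle is the non-polyhedral approximation step, specifically ensuring that the weights $v_j$ are well defined and continuous. I would not reprove it. Instead I would cite the conic Steiner formula of \citet{McCoyTropp2014} directly for general closed convex cones, so the proof reduces to the Moreau identification and the observation that $\mathcal C_{P_r}$ is a closed convex cone in a $d_r$-dimensional space.
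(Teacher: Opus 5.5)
Your proposal is correct and follows the paper's route: setting $s=r$ gives $m_r=0$ and $\mathcal D_{P_r,0}=\{0\}$, Moreau's decomposition turns the limit of Theorem~\ref{thm:stratified-limit} into $\|\Pi_{\mathcal C_{P_r}}(Y)\|_F^2$ for a standard Gaussian $Y$ on the $d_r$-dimensional space $\Smat^{k_r}$, and the chi-bar-square mixture is then the conic Steiner formula of McCoy--Tropp applied to the closed convex cone $\mathcal C_{P_r}$. Your face-decomposition sketch goes beyond what the paper gives; if you keep it, note that the polyhedral approximation step needs continuity of $C\mapsto\Pi_C(y)$ under local Hausdorff convergence of cones, not the $1$-Lipschitz property of $\Pi_C$ in $y$.
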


We next compare two reduced calibration problems on the same active space
$\Smat^k$.  Let $\mathcal S_1$ and $\mathcal S_2$ be self-adjoint
positive-definite operators, and for $i=1,2$ define
\begin{equation}
\mathcal C_i=\mathcal S_i^{-1/2}(\Splus^k),
\qquad
\mathcal D_{i,m}=\mathcal S_i^{-1/2}(\mathcal K_m^k).
\label{eq:two-reduced-calibrations}
\end{equation}
If, for some $a>0$ and orthogonal operator
$\mathcal Q:\Smat^k\to\Smat^k$,
\begin{equation}
\mathcal S_2
=
a\,\mathcal Q\mathcal S_1\mathcal Q^*,
\qquad
\mathcal Q(\Splus^k)=\Splus^k,
\qquad
\mathcal Q(\mathcal K_m^k)=\mathcal K_m^k,
\label{eq:sufficient-equivalence}
\end{equation}
then $\mathcal C_2=\mathcal Q\mathcal C_1$ and
$\mathcal D_{2,m}=\mathcal Q\mathcal D_{1,m}$: positive scalar dilations do
not change cones.  Hence the two Gaussian distance-difference statistics
have the same law.  This sufficient condition is not necessary; equality of
the limiting laws requires only equality in distribution of the corresponding
distance functionals.

\subsection{Continuity of stratified calibration}

Continuity is needed both for compact least-favourable problems and for
plug-in calibration.  Let $\mathfrak P_k$ denote the positive-definite
self-adjoint operators on $\Smat^k$.  For fixed $m\le k$, write
\begin{equation}
\Delta_m(\mathcal S,Y)
=
\dist_F^2\{Y,\mathcal S^{-1/2}(\mathcal K_m^k)\}
-
\dist_F^2\{Y,\mathcal S^{-1/2}(\Splus^k)\}.
\label{eq:generic-stratum-stat}
\end{equation}
Let $F_{m,\mathcal S}$ be the distribution function of
$\Delta_m(\mathcal S,Y)$ for standard Gaussian $Y$, and define
\begin{equation}
q_{\alpha,m}(\mathcal S)
:=q_\alpha(F_{m,\mathcal S})
=\inf\{c:F_{m,\mathcal S}(c)\ge1-\alpha\}.
\label{eq:generic-stratum-quantile}
\end{equation}

\begin{proposition}[Continuity of the stratified law]
\label{prop:law-continuity}
If $\mathcal S_n\to\mathcal S$ in operator norm with all operators positive
definite, then
\begin{equation}
\Delta_m(\mathcal S_n,Y)
\longrightarrow
\Delta_m(\mathcal S,Y)
\quad\hbox{for every }Y\in\Smat^k.
\label{eq:pointwise-continuity}
\end{equation}
Consequently the distribution functions converge at every continuity point
of the limiting law.  If the $(1-\alpha)$ quantile is positive and the
limiting distribution is strictly increasing there, then the corresponding
quantiles converge.
\end{proposition}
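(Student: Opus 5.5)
The proof splits into three parts: pointwise convergence of each squared distance, convergence of the distribution functions, and convergence of quantiles.

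\textbf{Step 1: pointwise continuity.} Write each squared distance as an infimum over the fixed sets $\mathcal K_m^k$ and $\Splus^k$. By the change of variable $C=\mathcal S^{-1/2}B$,
\begin{equation}
\dist_F^2\{Y,\mathcal S^{-1/2}(\mathcal K)\}
=\inf_{B\in\mathcal K}\|Y-\mathcal S^{-1/2}B\|_F^2 ,
\end{equation}
for $\mathcal K\in\{\mathcal K_m^k,\Splus^k\}$. Both sets are closed cones containing $0$.

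Since $\mathcal S_n\to\mathcal S$ in operator norm with positive-definite limit, $\mathcal S_n^{-1/2}\to\mathcal S^{-1/2}$ in operator norm. The inverse square root is continuous on $\mathfrak P_k$, for instance by the holomorphic functional calculus or by Lipschitz continuity of $x\mapsto x^{-1/2}$ on a compact interval away from zero. The same convergence gives uniform bounds $\|\mathcal S_n^{1/2}\|_{\mathrm{op}}\le M$ for all large $n$.

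Now localise the minimisers. Taking $B=0$ shows the infimum is at most $\|Y\|_F^2$. Any near-minimiser therefore satisfies $\|\mathcal S_n^{-1/2}B\|_F\le2\|Y\|_F$, hence $\|B\|_F\le 2M\|Y\|_F$. So every infimum can be restricted to the compact set
\begin{equation}
\mathcal K\cap\{\|B\|_F\le R\},\qquad R=2M\|Y\|_F ,
\end{equation}
uniformly in $n$. On this compact set the integrand converges uniformly:
\begin{equation}
\bigl|\|Y-\mathcal S_n^{-1/2}B\|_F^2-\|Y-\mathcal S^{-1/2}B\|_F^2\bigr|
\le \|\mathcal S_n^{-1/2}-\mathcal S^{-1/2}\|_{\mathrm{op}}\,R\,\bigl(2\|Y\|_F+cR\bigr)\to0 .
\end{equation}
Infima of uniformly convergent functions on a common set converge. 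Taking the difference of the two distances gives \eqref{eq:pointwise-continuity} for every $Y$.

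\textbf{Step 2: convergence of laws.} Pointwise (hence almost sure) convergence of $\Delta_m(\mathcal S_n,Y)\to\Delta_m(\mathcal S,Y)$ with the same $Y$ implies convergence in distribution. The Portmanteau theorem then gives $F_{m,\mathcal S_n}(c)\to F_{m,\mathcal S}(c)$ at every continuity point $c$ of $F_{m,\mathcal S}$.

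\textbf{Step 3: quantiles.} This is the only delicate point, because the limit law may have an atom at $0$; for example, the top-stratum law has the atom $v_0$. The hypothesis places the quantile $c^*=q_{\alpha,m}(\mathcal S)>0$, and on $(0,\infty)$ the law has no atoms. The reason is that $\{Y:\Delta_m(\mathcal S,Y)=c\}$ is Lebesgue-null for $c>0$ by the semi-algebraic structure. Simpler still, the argument below only needs continuity points, and these are dense.

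Given $\varepsilon>0$ with $c^*-\varepsilon>0$, choose continuity points $c_1\in(c^*-\varepsilon,c^*)$ and $c_2\in(c^*,c^*+\varepsilon)$. Strict increase of $F_{m,\mathcal S}$ at $c^*$ gives
\begin{equation}
F_{m,\mathcal S}(c_1)<1-\alpha<F_{m,\mathcal S}(c_2).
\end{equation}
The second inequality uses right-continuity at $c^*$ together with strict increase to the right. By Step 2, the same strict inequalities hold for $F_{m,\mathcal S_n}$ once $n$ is large. Hence $c_1<q_{\alpha,m}(\mathcal S_n)\le c_2$ for large $n$, which proves convergence of the quantiles.

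The main obstacle is Step 1's uniform localisation, which is needed because $\mathcal K_m^k$ is nonconvex. Without it one could not pass to the limit in the infima over unbounded sets. The uniform operator bound on $\mathcal S_n^{1/2}$ is what resolves it.
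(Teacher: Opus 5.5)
Your proof is correct and follows essentially the same route as the paper. The ingredients match: continuity of $\mathcal S\mapsto\mathcal S^{-1/2}$, then a localisation that makes both distance functionals converge, then weak convergence and the standard quantile argument. Your pull-back to the fixed cones $\mathcal K_m^k$ and $\Splus^k$, with uniform convergence of the objective on a common compact set, is the same estimate that underlies the paper's local Hausdorff convergence of the image cones. Your Step 2 is slightly cleaner than the paper's, since almost sure convergence for a common $Y$ already gives convergence in distribution without the domination bound $K(1+\|Y\|_F^2)$. One small correction to your closing remark: the localisation is needed because the cones are unbounded, not because $\mathcal K_m^k$ is nonconvex.
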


\begin{proof}
The inverse-square-root map is continuous on $\mathfrak P_k$.  On every
bounded ball, the images
$\mathcal S_n^{-1/2}(\mathcal K_m^k)$ and
$\mathcal S_n^{-1/2}(\Splus^k)$ converge in Hausdorff distance to their
limits.  Distance to a closed set is stable under such local Hausdorff
convergence, proving \eqref{eq:pointwise-continuity}.  The random variables
are bounded by a constant multiple of $1+\|Y\|_F^2$ when the operators range
in a compact subset of $\mathfrak P_k$, so dominated convergence gives weak
convergence.  Quantile convergence follows from strict increase at the
target level.
\end{proof}

\begin{corollary}[Plug-in critical values]
\label{cor:plugin-calibration}
Let $\widehat{\mathcal S}_n$ be a positive-definite estimator satisfying
$\widehat{\mathcal S}_n\to\mathcal S$ in probability.  Under the positive
and strictly increasing quantile condition in
Proposition~\ref{prop:law-continuity},
\begin{equation}
q_{\alpha,m}(\widehat{\mathcal S}_n)
\xrightarrow{\Pr}
q_{\alpha,m}(\mathcal S).
\label{eq:plugin-quantile}
\end{equation}
Consequently, whenever the likelihood-ratio statistic has the fixed-stratum
limit $\Delta_m(\mathcal S,Y)$, replacing the limiting critical value by the
plug-in value is asymptotically equivalent at continuity points of the law.
\end{corollary}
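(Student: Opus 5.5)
The plan is to reduce the statement to the deterministic continuity in Proposition~\ref{prop:law-continuity}, combined with a subsequence argument that converts almost-sure convergence into convergence in probability. Fix $\alpha$, and write $c^\ast=q_{\alpha,m}(\mathcal S)$ and $F=F_{m,\mathcal S}$. The first step is a purely deterministic claim: for every sequence of positive-definite operators $\mathcal S_n\to\mathcal S$ in operator norm, $q_{\alpha,m}(\mathcal S_n)\to c^\ast$. This follows from Proposition~\ref{prop:law-continuity} directly. To keep the argument self-contained, I would spell out the quantile step. Take $\varepsilon>0$ small enough that $c^\ast-\varepsilon>0$. Strict increase of $F$ at $c^\ast$ gives $F(c^\ast-\varepsilon)<1-\alpha<F(c^\ast+\varepsilon)$, where the right inequality uses strict increase together with right continuity. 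Since $F$ has at most countably many atoms, we may shrink $\varepsilon$ so that $c^\ast\pm\varepsilon$ are continuity points. Weak convergence then gives $F_{m,\mathcal S_n}(c^\ast\pm\varepsilon)\to F(c^\ast\pm\varepsilon)$, so eventually $c^\ast-\varepsilon< q_{\alpha,m}(\mathcal S_n)\le c^\ast+\varepsilon$. Positivity of the quantile matters because $F$ has an atom at $0$ whenever the cones share directions, and it is what lets us stay away from that atom.

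The second step transfers this to the random estimator. By the subsequence characterisation of convergence in probability, every subsequence of $\widehat{\mathcal S}_n$ has a further subsequence converging to $\mathcal S$ almost surely. Along that further subsequence, the first step applied pathwise gives almost-sure convergence of $q_{\alpha,m}(\widehat{\mathcal S}_n)$ to $c^\ast$. Hence every subsequence of the quantiles has an almost-surely convergent further subsequence, which is exactly \eqref{eq:plugin-quantile}. Measurability of $q_{\alpha,m}(\widehat{\mathcal S}_n)$ can be handled in either of two ways. One can note that $\mathcal S\mapsto F_{m,\mathcal S}(c)$ is continuous at continuity points, so the quantile is a Borel function of $\mathcal S$. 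Alternatively, one can phrase the conclusion with outer probabilities, which suffices.

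For the consequence, suppose $\Lambda_n\Rightarrow\Delta_m(\mathcal S,Y)$. Since $q_{\alpha,m}(\widehat{\mathcal S}_n)\to c^\ast$ in probability, Slutsky's lemma gives $\Lambda_n-q_{\alpha,m}(\widehat{\mathcal S}_n)\Rightarrow\Delta_m(\mathcal S,Y)-c^\ast$. When $c^\ast$ is a continuity point of $F$, which holds under strict increase at a positive level once atoms are excluded there, this yields $\Pr\{\Lambda_n>q_{\alpha,m}(\widehat{\mathcal S}_n)\}\to 1-F(c^\ast)$. The same limit holds with the fixed critical value $c^\ast$, so the two tests are asymptotically equivalent.

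The main obstacle is the quantile step, specifically making sure that $c^\ast$ is not an atom and that $F$ crosses $1-\alpha$ transversally. I would rely on the hypothesis of Proposition~\ref{prop:law-continuity} rather than prove absolute continuity on $(0,\infty)$. If needed, I would argue that atoms at positive levels are impossible, since $\Delta_m(\mathcal S,\cdot)$ is a difference of squared distances that is locally Lipschitz and nonconstant on open sets where it is positive. I would treat that argument only as a remark.
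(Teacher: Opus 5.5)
Your proposal is correct and follows the argument the paper leaves implicit. The paper gives no separate proof of this corollary; it relies on Proposition~\ref{prop:law-continuity} making $\mathcal S\mapsto q_{\alpha,m}(\mathcal S)$ continuous at $\mathcal S$. Your subsequence argument is the continuous-mapping theorem for convergence in probability, and your Slutsky step at a continuity point $c^\ast$ gives the asymptotic-equivalence consequence. Two minor points: $F(c^\ast-\varepsilon)<1-\alpha$ follows from the definition of the quantile rather than from strict increase, and positivity of $c^\ast$ is never actually used in your quantile step.
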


This result supplies the continuity needed for plug-in and compact
least-favourable constructions.  It does not imply differentiability, which
is substantially more delicate even on the top stratum.

\subsection{Lower strata and isotropic transition dominance}

For $s<r$, the null projection in \eqref{eq:stratified-limit} is a
rank-constrained approximation.  Under anisotropy it is a nonconvex
quadratic problem.  In the isotropic case both fixed-stratum and transition
laws become explicit.  For $C\in\mathcal K_m^k$, define
\begin{equation}
\Delta_m^{\mathrm{tr}}(C;Y)
=
\dist_F^2(Y+C,\mathcal K_m^k)
-
\dist_F^2(Y+C,\Splus^k).
\label{eq:isotropic-transition-stat}
\end{equation}
The case $C=0$ is the fixed-stratum law; an admissible $C\ne0$ is the
whitened drift in Corollary~\ref{cor:null-transition}.

\begin{proposition}[Isotropic spectral representation]
\label{prop:isotropic-spectral}
Suppose that $\mathcal S_{P_s}$ is proportional to the identity on
$\Smat^{k_s}$.  Let $C\in\mathcal K_{m_s}^{k_s}$ and let
$\lambda_1\ge\cdots\ge\lambda_{k_s}$ denote the eigenvalues of $Y+C$, where
$Y$ is standard Gaussian in $\Smat^{k_s}$.  Then
\begin{equation}
\Delta_{m_s}^{\mathrm{tr}}(C;Y)
=
\sum_{j=m_s+1}^{k_s}\{\lambda_j(Y+C)_+\}^2.
\label{eq:isotropic-spectral}
\end{equation}
\end{proposition}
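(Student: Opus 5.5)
Under the isotropy hypothesis $\mathcal S_{P_s}=c\,\Id$ with $c>0$, the sets in \eqref{eq:CD} are unchanged, because $\Splus^{k_s}$ and $\mathcal K_{m_s}^{k_s}$ are cones and are invariant under the dilation $c^{-1/2}$. So $\mathcal C_{P_s}=\Splus^{k_s}$ and $\mathcal D_{P_s,m_s}=\mathcal K_{m_s}^{k_s}$, and the statistic is exactly \eqref{eq:isotropic-transition-stat}. Write $k=k_s$, $m=m_s$ and $X=Y+C$, and spectrally decompose $X=U\diag(\lambda_1,\ldots,\lambda_k)U^\top$ with $\lambda_1\ge\cdots\ge\lambda_k$. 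The plan is to compute both squared distances in closed form in terms of the $\lambda_j$ and then subtract. The fact that $C\in\mathcal K_m^k$ plays no role in the algebra; the identity holds for every symmetric $X$.

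\textbf{Step 1: distance to the semidefinite cone.} The distance to $\Splus^k$ is classical. By Moreau's decomposition, or directly from the Hoffman--Wielandt argument in Step 2 with the rank restriction dropped, we have $\Pi_{\Splus^k}(X)=U\diag(\lambda_j)_+U^\top$. Hence
\begin{equation}
\dist_F^2(X,\Splus^k)=\sum_{j=1}^k\{(\lambda_j)_-\}^2,
\qquad x_-=\max(-x,0).
\end{equation}

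\textbf{Step 2: distance to the rank-constrained cone.} This is the main point, since $\mathcal K_m^k$ is nonconvex and Moreau does not apply. Take any $B\in\mathcal K_m^k$ and let its eigenvalues be $\mu_1\ge\cdots\ge\mu_k$. Then $\mu_j\ge0$ for all $j$, and $\mu_j=0$ for $j>m$. The Hoffman--Wielandt inequality for symmetric matrices, which follows from von Neumann's trace inequality $\tr(XB)\le\sum_j\lambda_j\mu_j$, gives
\begin{equation}
\|X-B\|_F^2\ge\sum_{j=1}^k(\lambda_j-\mu_j)^2 .
\end{equation}
Next we minimise the right-hand side over admissible $\mu$, ignoring the ordering constraint, which can only lower the bound. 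The minimisation separates by coordinate. For $j\le m$, the constraint is $\mu_j\ge0$, and the minimum of $(\lambda_j-\mu_j)^2$ is $\{(\lambda_j)_-\}^2$. For $j>m$, $\mu_j=0$ is forced, giving $\lambda_j^2$.

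The unconstrained minimiser $\mu_j=(\lambda_j)_+$ for $j\le m$, and $\mu_j=0$ otherwise, is automatically nonincreasing. It is attained by
\begin{equation}
B^\star=U\diag\{(\lambda_1)_+,\ldots,(\lambda_m)_+,0,\ldots,0\}U^\top\in\mathcal K_m^k,
\end{equation}
for which equality holds. Therefore
\begin{equation}
\dist_F^2(X,\mathcal K_m^k)=\sum_{j\le m}\{(\lambda_j)_-\}^2+\sum_{j>m}\lambda_j^2 .
\end{equation}

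\textbf{Step 3: subtraction.} Subtracting the two formulas, the terms with $j\le m$ cancel. For $j>m$ we use $\lambda_j^2-\{(\lambda_j)_-\}^2=\{(\lambda_j)_+\}^2$. This gives $\sum_{j>m}\{\lambda_j(Y+C)_+\}^2$, which is \eqref{eq:isotropic-spectral}.

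The only delicate step is the lower bound in Step 2. There one must invoke the sorted-pairing form of the trace inequality so that the $m$ largest eigenvalues of $X$ are matched with the possibly nonzero eigenvalues of $B$. Once that is in place, the rest is coordinatewise algebra.
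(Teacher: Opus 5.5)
Your proof is correct and follows the same route as the paper: both observe that isotropic scaling leaves the two cones unchanged, project $Y+C$ onto $\Splus^{k_s}$ and onto $\mathcal K_{m_s}^{k_s}$ by spectral truncation, and subtract the squared residuals. The one difference is that you justify the truncation onto the nonconvex set $\mathcal K_{m_s}^{k_s}$ through the von Neumann trace inequality (Hoffman--Wielandt), whereas the paper, in the main text and in the supplement, only asserts that step.
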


\begin{proof}
Positive scaling of $\mathcal S_{P_s}$ does not change either cone.  Under
the Frobenius metric, projection onto $\Splus^{k_s}$ replaces negative
eigenvalues by zero.  Projection onto $\mathcal K_{m_s}^{k_s}$ additionally
retains only the $m_s$ largest positive eigenvalues.  Applying this spectral
truncation to $Y+C$ and subtracting the squared residuals gives
\eqref{eq:isotropic-spectral}.
\end{proof}

Subsection~\ref{subsec:supp-spectral-projection} gives the corresponding
spectral approximation argument in expanded form.

The number of retained tail terms is
$k_s-m_s=q-r$, independent of $s$.  Interlacing orders the central laws, and
a compression to the kernel of $C$ controls the whole transition family.

\begin{theorem}[Top-stratum dominance under isotropy]
\label{thm:isotropic-dominance}
Assume that the reduced covariance is proportional to the identity on
every stratum and set $p=q-r$.  In this theorem write
$\Delta_{s,r}:=\Delta_{s,r}(P_s)$; its law is independent of $P_s$ under
isotropy.  Let $Y_{k_s}$ denote the standard Gaussian matrix in
$\Smat^{k_s}$ used to construct that stratum's statistic.  The fixed-stratum
limits can be coupled so that
\begin{equation}
\Delta_{0,r}\le\Delta_{1,r}\le\cdots\le\Delta_{r,r}
\quad\hbox{almost surely}.
\label{eq:pathwise-dominance}
\end{equation}
Moreover, for every $s\le r$ and every
$C\in\mathcal K_{m_s}^{k_s}$, there is a coupling with a standard Gaussian
$G_p\in\Smat^p$ such that
\begin{equation}
\Delta_{m_s}^{\mathrm{tr}}(C;Y_{k_s})
\le
T_p(G_p)
:=
\sum_{j=1}^{p}\{\lambda_j(G_p)_+\}^2
\quad\hbox{almost surely}.
\label{eq:isotropic-transition-dominance}
\end{equation}
The variable $T_p(G_p)$ has the rank-$r$ top-stratum law.  Hence the
rank-$r$ critical value controls every fixed stratum and every local null
rank transition in the isotropic experiment.
\end{theorem}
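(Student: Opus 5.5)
Both parts rest on Proposition~\ref{prop:isotropic-spectral} together with Cauchy interlacing for principal submatrices. Write $k_s=q-s$ and $m_s=r-s$, so that $k_s-m_s=p$ on every stratum.

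For the fixed strata we build every $Y_{k_s}$ from one matrix. Take a single standard Gaussian $Y_q\in\Smat^q$ and let $Y_{k_s}$ be its trailing $k_s\times k_s$ principal submatrix. In the Frobenius-orthonormal basis $E_{ii},F_{ij}$, a principal submatrix of a standard Gaussian matrix is again standard Gaussian. With $C=0$, Proposition~\ref{prop:isotropic-spectral} gives
\[
\Delta_{s,r}=\sum_{j=m_s+1}^{k_s}\{\lambda_j(Y_{k_s})_+\}^2=\sum_{i=1}^{p}\{\lambda_{k_s-p+i}(Y_{k_s})_+\}^2 .
\]
So $\Delta_{s,r}$ is a sum over the $p$ smallest eigenvalues of $Y_{k_s}$. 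Now pass from $Y_{k_{s+1}}$ to $Y_{k_s}$, which is one dimension larger. Cauchy interlacing gives
\[
\lambda_{j+1}(Y_{k_s})\le\lambda_j(Y_{k_{s+1}})\qquad\text{for every } j .
\]
Take $j=k_{s+1}-p+i=k_s-p+i-1$. Then $\lambda_{k_s-p+i}(Y_{k_s})\le\lambda_{k_{s+1}-p+i}(Y_{k_{s+1}})$ for $i=1,\dots,p$. The map $x\mapsto(x_+)^2$ is nondecreasing, so summing over $i$ yields $\Delta_{s,r}\le\Delta_{s+1,r}$. Chaining these inequalities gives \eqref{eq:pathwise-dominance}.

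For the transition bound fix $C\in\mathcal K_{m_s}^{k_s}$ of rank $m\le m_s$. Rotate so that $C=\diag(C_1,0)$ with $C_1\in\Smat^m_{++}$; the law of $Y$ is orthogonally invariant, so this costs nothing. Let $N$ be the orthonormal basis of $\ker C$, which has dimension $k_s-m\ge p$. Choose a $p$-dimensional subspace $V\subseteq\ker C$ with basis matrix $V_p\in\mathbb R^{k_s\times p}$, and set
\[
G_p=V_p^\top (Y+C)V_p=V_p^\top YV_p .
\]
This is a compression of a standard Gaussian, so $G_p$ is standard Gaussian in $\Smat^p$. For the comparison we need, for $i=1,\dots,p$,
\[
\lambda_{m_s+i}(Y+C)\le\lambda_i(G_p).
\]
By Courant--Fischer, $\lambda_{m_s+i}(Y+C)$ is the minimum over subspaces $W$ of codimension $m_s+i-1$ of the maximal Rayleigh quotient on $W$. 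Take $W=\operatorname{span}(V_p u : u\in U)\oplus V^{\perp}\cap\ldots$; this choice does not work directly. The clean route is the Poincaré separation theorem. It gives $\lambda_i(V_p^\top MV_p)\ge\lambda_{i+k_s-p}(M)$, and the shift $k_s-p=m_s$ is exactly what we need. Hence, for any $p$-dimensional $V$,
\[
\lambda_i(G_p)\ge\lambda_{m_s+i}(Y+C),
\]
and squaring the positive parts and summing gives \eqref{eq:isotropic-transition-dominance}. Note that this step does not actually need $V\subseteq\ker C$. The kernel choice is what makes $V_p^\top(Y+C)V_p=V_p^\top YV_p$, which is standard Gaussian with no drift.

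By Proposition~\ref{prop:isotropic-spectral} with $s=r$, $k_r=p$, $m_r=0$ and $C=0$, the variable $T_p(G_p)$ has the top-stratum law. Pathwise domination implies stochastic domination, so the rank-$r$ critical value controls every fixed stratum and every local transition.

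The step needing the most care is the index bookkeeping in the Poincaré separation inequality. One must check that the retained tail starts at index $m_s+1$ and that the compression dimension is exactly $p$. One must also verify that choosing $V$ inside $\ker C$ is possible, that is, $k_s-\rank C\ge k_s-m_s=p$. This follows directly from $C\in\mathcal K_{m_s}^{k_s}$.
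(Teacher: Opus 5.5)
Your proposal is correct and follows the paper's proof: for the fixed strata you couple through nested principal submatrices and apply Cauchy interlacing (trailing rather than leading blocks makes no difference). For the transitions you compress to a $p$-dimensional subspace of $\ker C$ and apply the Poincar\'e separation inequality $\lambda_{m_s+i}(Y+C)\le\lambda_i(V_p^\top YV_p)$; the preliminary rotation of $C$ and the aborted Courant--Fischer sentence are unnecessary and can be deleted.
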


\begin{proof}
For \eqref{eq:pathwise-dominance}, couple standard Gaussian matrices of
successive dimensions as principal submatrices and apply Cauchy interlacing,
as shown in Section~\ref{sec:supp-isotropic} of the supplementary material.

For \eqref{eq:isotropic-transition-dominance}, write $k=k_s$ and $m=m_s$.
Because $\rank(C)\le m$, choose a $p$-dimensional subspace
$L\subseteq\ker(C)$ and an orthonormal basis matrix $V\in\mathbb R^{k\times p}$.
The separation inequalities for a compression give
\begin{equation}
\lambda_{m+j}(Y+C)
\le
\lambda_j\{V^\top(Y+C)V\}
=
\lambda_j(V^\top YV),
\qquad j=1,\ldots,p.
\end{equation}
The compression $V^\top YV$ is standard Gaussian in $\Smat^p$.  Taking
positive parts, squaring, and summing proves the result.
\end{proof}

The fixed-stratum coupling and the transition compression are detailed,
respectively, in Subsections~\ref{subsec:supp-fixed-ordering} and
\ref{subsec:supp-isotropic-transition} of the supplement.

\begin{corollary}[Isotropic calibration along rank interfaces]
\label{cor:isotropic-transition-calibration}
Let $c_{\alpha,r}^{\mathrm{top}}$ be a positive upper $\alpha$ critical value
of the isotropic top-stratum law.  Along every null path covered by
Corollary~\ref{cor:null-transition},
\begin{equation}
\limsup_{n\to\infty}
\Pr_{\theta_n}\{\Lambda_n>c_{\alpha,r}^{\mathrm{top}}\}
\le\alpha.
\label{eq:isotropic-transition-size}
\end{equation}
The same conclusion holds uniformly over compact drift families whenever
the LAN and localisation assumptions hold uniformly over those families.
\end{corollary}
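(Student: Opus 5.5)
The plan is to combine Corollary~\ref{cor:null-transition}, which gives the limit along the path, with the pathwise bound \eqref{eq:isotropic-transition-dominance} of Theorem~\ref{thm:isotropic-dominance}, and then pass to probabilities via the portmanteau theorem.

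Fix a null path as in Corollary~\ref{cor:null-transition}, based at a null point of rank $s\le r$.

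\emph{Step 1: identify the limit.} By Corollary~\ref{cor:null-transition}, under $\Pr_{\theta_n}$ we have $\Lambda_n\Rightarrow\Delta_{s,r}^{\mathrm{tr}}(P_s;H)$, and the whitened drift satisfies $\mu_s(H)=\mathcal S_{P_s}^{-1/2}\mathcal A_{P_s}(H)$ with $\mathcal A_{P_s}(H)\in\mathcal K_{m_s}^{k_s}$. Under isotropy, $\mathcal S_{P_s}=\sigma^2\Id$, so $\mu_s(H)=\sigma^{-1}\mathcal A_{P_s}(H)$. This matrix again lies in $\mathcal K_{m_s}^{k_s}$, because that set is a cone. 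Positive scaling leaves both $\mathcal C_{P_s}$ and $\mathcal D_{P_s,m_s}$ unchanged, so the limit equals $\Delta_{m_s}^{\mathrm{tr}}(C;Y_{k_s})$ with $C=\mu_s(H)$.

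\emph{Step 2: stochastic dominance.} Theorem~\ref{thm:isotropic-dominance} supplies a coupling in which $\Delta_{m_s}^{\mathrm{tr}}(C;Y_{k_s})\le T_p(G_p)$ almost surely. Moreover, $T_p(G_p)$ has the rank-$r$ top-stratum law, which I denote $F^{\mathrm{top}}$. It follows that $\Pr\{\Delta^{\mathrm{tr}}>c\}\le \Pr\{T_p>c\}$ for every real $c$.

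\emph{Step 3: portmanteau.} The set $(c,\infty)$ with $c=c_{\alpha,r}^{\mathrm{top}}$ is open, so the portmanteau theorem would give a $\liminf$ bound, not the $\limsup$ we need. I therefore use the closed set $[c,\infty)$ instead:
\[
\limsup_n\Pr_{\theta_n}\{\Lambda_n>c\}\le\limsup_n\Pr_{\theta_n}\{\Lambda_n\ge c\}\le\Pr\{\Delta^{\mathrm{tr}}\ge c\}\le\Pr\{T_p\ge c\}.
\]
The last inequality holds because $\Pr\{\Delta^{\mathrm{tr}}\ge c\}=\lim_{c'\uparrow c}\Pr\{\Delta^{\mathrm{tr}}>c'\}$ and Step 2 applies at each $c'$.

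\emph{Step 4: no atom at $c$.} This is the main obstacle. For $c>0$, the law of $T_p$ has no atom at $c$. By Corollary~\ref{cor:chibar} it is a finite mixture of $\chi^2_j$ laws in which only $j=0$ carries an atom, and that atom sits at $0$. Hence $\Pr\{T_p\ge c\}=1-F^{\mathrm{top}}(c)$. By the definition of the quantile and right-continuity, $F^{\mathrm{top}}(c)\ge1-\alpha$, so the bound is at most $\alpha$. This is exactly why the statement requires the critical value to be positive.

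\emph{Uniform version.} For compact drift families, the plan is to argue by contradiction. Suppose there is a sequence of paths along which the size exceeds $\alpha+\varepsilon$. Pass to a subsequence whose drifts $C_n\to C$ in the compact family; closedness of $\mathcal K_{m}^{k}$ keeps $C$ admissible. Uniform LAN and localisation then give weak convergence of $\Lambda_n$ to $\Delta^{\mathrm{tr}}_{m_s}(C;Y)$, using continuity of $C\mapsto\Delta^{\mathrm{tr}}_{m_s}(C;Y)$, which holds since distances are Lipschitz. Steps 2–4 then apply to this limit and give a contradiction. Strata with different $s$ are finitely many, so they can be handled one at a time.
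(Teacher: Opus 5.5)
Your proposal is correct and takes the paper's route: the paper proves this corollary by combining the translated limit of Corollary~\ref{cor:null-transition} with the pathwise compression bound of Theorem~\ref{thm:isotropic-dominance}, and it treats the uniform version by remarking that uniform LAN, localisation and third-lemma approximations carry the argument over to compact drift sets. Your closed-set portmanteau step and your observation that the chi-bar-square law of $T_p(G_p)$ has no atom at a positive level supply the detail the paper leaves implicit, and they explain why the critical value must be positive.
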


Throughout the numerical sections, if $F$ is the distribution function of
the law under discussion, $q_{0.05}=F^{-1}(0.95)$ denotes its upper $5\%$
critical value; similarly $q_{0.10}=F^{-1}(0.90)$.
Table~\ref{tab:strata-isotropic} illustrates the separation for $q=3$ and
$r=2$.  The lower-stratum values were obtained from $10^6$ standard Gaussian
matrices.  On the rank-zero stratum the statistic has more than $95\%$ mass
at zero, so its $95\%$ quantile is zero.

\begin{table}[t]
\caption{Isotropic stratified limits for $q=3$, $r=2$; Monte Carlo standard
errors for positive quantiles are below $1.5\times10^{-3}$.}
\label{tab:strata-isotropic}
\centering
\begin{tabular}{cccccc}
\toprule
$s$ & $k_s$ & $m_s$ & statistic & $q_{0.10}$ & $q_{0.05}$\\
\midrule
$2$ & $1$ & $0$ & $(\lambda_1^+)^2$ & $1.6424$ & $2.7055$\\
$1$ & $2$ & $1$ & $(\lambda_2^+)^2$ & $0.0356$ & $0.2304$\\
$0$ & $3$ & $2$ & $(\lambda_3^+)^2$ & $0$ & $0$\\
\bottomrule
\end{tabular}
\end{table}

The isotropic theorem is not an anisotropic ordering principle.  There is,
however, one anisotropic case in which the geometry alone is sufficient.

\begin{proposition}[Anisotropic transition dominance in active corank one]
\label{prop:corank-one-transition}
Let $k-m=1$, let $\mathcal S$ be any positive-definite operator on
$\Smat^k$, and put
\begin{equation}
\mathcal C=\mathcal S^{-1/2}(\Splus^k),
\qquad
\mathcal D=\mathcal S^{-1/2}(\mathcal K_{k-1}^k)=\partial\mathcal C.
\end{equation}
For every null drift $\mu\in\mathcal D$ and standard Gaussian $Y$,
\begin{equation}
\dist_F^2(Y+\mu,\mathcal D)-\dist_F^2(Y+\mu,\mathcal C)
\preceq_{\mathrm{st}}(Z_+)^2,
\qquad Z\sim N(0,1).
\label{eq:corank-one-transition-dominance}
\end{equation}
Indeed, the two variables can be coupled so that the left-hand side is at
most $(Z_+)^2$ almost surely.  Thus top-stratum calibration controls all
local null transitions whenever $q-r=1$, without an isotropy assumption.
\end{proposition}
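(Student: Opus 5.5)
The plan is a direct geometric argument on the whitened active space $\Smat^k$, with no spectral computation. It rests on three ingredients: the identification $\mathcal D=\partial\mathcal C$, a case split according to whether $Y+\mu$ lies in $\mathcal C$, and a supporting hyperplane to $\mathcal C$ at the drift $\mu$. The coupling variable will be $Z=-\langle\nu,Y\rangle_F$ for a suitable unit normal $\nu$ that depends only on $\mu$. This $Z$ is exactly $N(0,1)$ because $Y$ is standard Gaussian in the Frobenius metric.

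First I would justify $\mathcal D=\partial\mathcal C$. In $\Smat^k$, the boundary of $\Splus^k$ consists exactly of the singular positive-semidefinite matrices, that is $\mathcal K_{k-1}^k$. The map $\mathcal S^{-1/2}$ is a linear homeomorphism, so it carries boundary onto boundary and interior onto interior. Hence $\mathcal C$ is a closed convex cone with nonempty interior and $\mathcal D=\partial\mathcal C$.

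Next, put $X=Y+\mu$ and split into two cases.
\begin{itemize}
\item If $X\notin\mathcal C$, the nearest point of the closed set $\mathcal C$ to $X$ lies on $\partial\mathcal C$. Hence $\dist_F(X,\mathcal D)=\dist_F(X,\mathcal C)$ and the difference is $0\le (Z_+)^2$, whatever $Z$ is.
\item If $X\in\mathcal C$, the difference equals $\dist_F^2(X,\partial\mathcal C)$, and this quantity must be bounded.
\end{itemize}

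For the second case I use the supporting hyperplane theorem at $\mu\in\partial\mathcal C$. It gives a unit $\nu$ with $\langle\nu,z\rangle\le\langle\nu,\mu\rangle$ for all $z\in\mathcal C$. Since $\mathcal C$ is a cone, we may take $z=t\mu$ for all $t\ge0$; this forces $\langle\nu,\mu\rangle=0$ and $\nu\in\mathcal C^\circ$. If $\mu=0$, any nonzero unit element of $\mathcal C^\circ$ works; such an element exists because $\mathcal C\ne\Smat^k$.

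Now take $X\in\mathcal C$ and follow the ray $t\mapsto X+t\nu$ for $t\ge0$. Along it, $\langle\nu,X+t\nu\rangle=\langle\nu,X\rangle+t$, which is positive once $t>-\langle\nu,X\rangle$. At such $t$ the point lies outside $\mathcal C$. Since the ray starts in the closed set $\mathcal C$, it must cross $\partial\mathcal C$ at some $t^*\le-\langle\nu,X\rangle$. Therefore
\begin{equation}
\dist_F(X,\partial\mathcal C)\le t^*\le -\langle\nu,X\rangle=-\langle\nu,Y\rangle=Z .
\end{equation}
Here we used $\langle\nu,\mu\rangle=0$. As $X\in\mathcal C$, we also have $Z=-\langle\nu,X\rangle\ge0$, so $Z=Z_+$. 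Combining the two cases gives the almost-sure bound by $(Z_+)^2$, and this implies the stated stochastic dominance.

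Finally, the top-stratum law with $q-r=1$ is $(Z_+)^2$ by Theorem~\ref{thm:stratified-limit}, since $\mathcal C$ is then a half-line in $\Smat^1$. Together with Corollary~\ref{cor:null-transition}, this yields the calibration consequence.

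The only delicate point is the second case. One cannot bound the distance to the nonconvex set $\partial\mathcal C$ by the distance to the supporting hyperplane $\{\langle\nu,\cdot\rangle=0\}$ directly, because that hyperplane is not contained in $\partial\mathcal C$. The ray argument avoids this issue: it produces an actual point of $\partial\mathcal C$ within distance $Z$ of $X$.
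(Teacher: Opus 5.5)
Your proof is correct and follows the paper's argument step for step. The shared steps are: identify $\mathcal D=\partial\mathcal C$ via the homeomorphism $\mathcal S^{-1/2}$; note that the distance difference vanishes off $\mathcal C$; and inside $\mathcal C$ bound the distance to the boundary by the first boundary-hitting time along a unit supporting normal at $\mu$, which yields the coupling with $Z=-\langle\nu,Y\rangle_F$. Your extra observation that $\langle\nu,\mu\rangle_F=0$, so that $Z\ge 0$ on $\{Y+\mu\in\mathcal C\}$, only makes explicit what the paper's supporting inequality $\langle n,z-\mu\rangle_F\le 0$ already gives.
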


\begin{proof}
The set $\mathcal D$ is the boundary of the closed convex cone $\mathcal C$.
Choose a unit supporting normal $n$ at $\mu$, meaning
$\langle n,z-\mu\rangle\le0$ for every $z\in\mathcal C$.  Outside $\mathcal C$ the two
projections coincide on the boundary, while inside $\mathcal C$ the distance
to the boundary is no larger than the positive displacement along this
normal.  The latter is the positive part of a standard normal coordinate.
The full argument is given in Section~\ref{subsec:supp-corank-one} of the supplementary material.
\end{proof}

\subsection{Computation on anisotropic lower strata}

The active reduction remains computationally useful even when $s<r$.  For a
given standard Gaussian draw $Y$, the alternative term in
\eqref{eq:stratified-limit} is the convex semidefinite projection
\begin{equation}
\min_{B\succeq0}
\|\mathcal S_{P_s}^{-1/2}B-Y\|_F^2,
\label{eq:lower-alt-projection}
\end{equation}
whereas the null term adds $\rank(B)\le m_s$.  A convenient factorisation is
$B=ZZ^{\top}$ with $Z\in\mathbb R^{k_s\times m_s}$, giving
\begin{equation}
\min_{Z\in\mathbb R^{k_s\times m_s}}
\|\mathcal S_{P_s}^{-1/2}(ZZ^{\top})-Y\|_F^2.
\label{eq:lower-rank-factor}
\end{equation}
Problem \eqref{eq:lower-rank-factor} is nonconvex and invariant under
$Z\mapsto ZO$, $O\in\operatorname O(m_s)$.  Its global minimum is nevertheless
the correct null distance, and the active dimension $k_s=q-s$ is often much
smaller than the ambient statistical dimension.  In low dimensions it can
be computed by multistart factor optimisation; in the isotropic case the
spectral truncation in Proposition~\ref{prop:isotropic-spectral} gives the
global solution directly.

The nonconvexity has two statistical consequences.  First, the limiting law
is generally not a chi-bar-square mixture, because the rank-constrained set
is not convex and has no conic intrinsic-volume decomposition of the form
used in Corollary~\ref{cor:chibar}.  Second, sensitivity at lower strata may
fail when the best rank-$m_s$ approximation is nonunique, for example at an
eigenvalue tie in the isotropic problem.  Extending
Section~\ref{sec:sensitivity} to lower strata therefore requires a
set-valued or directional theory rather than a direct reuse of the smooth
top-stratum formula.

For Monte Carlo calibration, the two optimisation problems should be solved
with the same Gaussian draw and the statistic computed as their difference.
The inequality
\begin{equation}
0\le
\Delta_{s,r}(P_s)
\le
\dist_F^2\{Y,\mathcal D_{P_s,m_s}\}
\le K(1+\|Y\|_F^2)
\label{eq:lower-bound-computation}
\end{equation}
for a finite constant $K$ that can be chosen uniformly when $\mathcal S_{P_s}$ ranges over a compact
positive-definite set.  It supplies a useful numerical check and the
domination used in Proposition~\ref{prop:law-continuity}.

\subsection{A stratified envelope and the remaining interface problem}

For a null parameter $\vartheta$ of rank $s$, let
$q_{\alpha,s}(\vartheta)$ be the upper-$\alpha$ critical value of its
fixed-stratum limit law.  Let $\mathcal K_s$ be a compact subset of the
rank-$s$ stratum that is rank separated, meaning that the smallest positive
eigenvalue of the matrix component is bounded away from zero uniformly over
$\mathcal K_s$.  Define
\begin{equation}
c_\alpha^{\mathrm{str}}
=
\max_{0\le s\le r}
\sup_{\vartheta\in\mathcal K_s}q_{\alpha,s}(\vartheta).
\label{eq:stratified-envelope}
\end{equation}

\begin{remark}[Uniformity on separated compact strata]
\label{rem:stratified-uniformity}
Suppose that the LAN expansion, local-set convergence, stochastic
localisation, and convergence of the likelihood-ratio distribution are
uniform on each $\mathcal K_s$.  If $c_\alpha^{\mathrm{str}}$ is a continuity
point of the corresponding limit laws, then
\begin{equation}
\limsup_{n\to\infty}
\sup_{\vartheta\in\cup_{s=0}^r\mathcal K_s}
\Pr_\vartheta\{\Lambda_n>c_\alpha^{\mathrm{str}}\}
\le\alpha.
\label{eq:stratified-size}
\end{equation}
\end{remark}

Rank separation excludes eigenvalues that vanish at the local statistical
rate.  Corollary~\ref{cor:null-transition} supplies the missing transition
experiment for such paths.  Theorem~\ref{thm:isotropic-dominance} then closes
the interface problem under isotropy, and
Proposition~\ref{prop:corank-one-transition} closes it under arbitrary
anisotropy when $q-r=1$.  In a general anisotropic active block of dimension
at least two, neither the fixed-stratum envelope nor the endpoint of a
particular transition is presently known to dominate every translated
nonconvex law.  Within the present framework, full uniformity on the
untrimmed null therefore remains unresolved in that genuinely anisotropic
regime.

\section{Positive-level sensitivity on the top stratum}
\label{sec:sensitivity}

The lower-stratum statistic involves a nonconvex rank-constrained projection.
We therefore develop sensitivity only on the top stratum $s=r$, where the
canonical limit is the convex projection in \eqref{eq:top-projection}.  A
useful simplification occurs here: the derivative of the optimal value does
not require differentiability of the projection point.  Uniqueness,
coercivity (the objective tends to infinity as $\|B\|_F\to\infty$), and
an envelope theorem are sufficient.

\subsection{Projection-value derivative}

For maps between finite-dimensional normed spaces, $\mathrm df(x)[h]$
denotes the Fr\'echet derivative: the unique linear map in $h$ such that
$f(x+h)=f(x)+\mathrm df(x)[h]+o(\|h\|)$.  Let $\mathfrak P_k$ be the cone of positive-definite self-adjoint operators
on $\Smat^k$.  For $\mathcal S\in\mathfrak P_k$, set
$\mathcal R=\mathcal S^{-1/2}$ and define
\begin{equation}
V(\mathcal R,Y)
=
\frac12\min_{B\succeq0}\|\mathcal RB-Y\|_F^2,
\qquad
T(\mathcal R,Y)
=
\|Y\|_F^2-2V(\mathcal R,Y).
\label{eq:value-problem}
\end{equation}
The minimiser $B^*$ is unique because $\mathcal R$ is invertible and the
quadratic objective is strongly convex, meaning that its Hessian is bounded
below by a positive multiple of the identity.  The following Karush--Kuhn--
Tucker conditions are the first-order optimality conditions: there is a dual
matrix $\Lambda^*\succeq0$ such that
\begin{equation}
\mathcal R^*(\mathcal RB^*-Y)-\Lambda^*=0,
\quad
B^*\succeq0,
\quad
\Lambda^*\succeq0,
\quad
\langle B^*,\Lambda^*\rangle=0.
\label{eq:KKT}
\end{equation}

\begin{proposition}[Operator differentiability of the projection value]
\label{prop:value-sensitivity}
For every invertible $\mathcal R$ and every $Y\in\Smat^k$, the minimiser
$B^*(\mathcal R,Y)$ in \eqref{eq:value-problem} is unique and continuous in
$(\mathcal R,Y)$.  The maps $V$ and $T$ are continuously Fr\'echet
differentiable with respect to $\mathcal R$, and
\begin{equation}
\mathrm dT(\mathcal R,Y)[\dot{\mathcal R}]
=
-2\left\langle
\mathcal RB^*-Y,
\dot{\mathcal R}B^*
\right\rangle.
\label{eq:dT}
\end{equation}
No strong-regularity assumption on the KKT solution mapping is required.
\end{proposition}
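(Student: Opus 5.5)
The plan is a standard Danskin/envelope argument on a compactified feasible set. Fix $(\mathcal R_0,Y_0)$ with $\mathcal R_0$ invertible. Write $f(\mathcal R,Y,B)=\frac12\|\mathcal RB-Y\|_F^2$.

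\textbf{Uniqueness and a uniform bound.} For $\mathcal R$ invertible, $f$ is strongly convex in $B$ with modulus $\sigma_{\min}(\mathcal R)^2>0$. It therefore has a unique minimiser over the closed convex cone $\Splus^k$.

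Next, I would localise. Take a neighbourhood $N$ of $(\mathcal R_0,Y_0)$ on which $\sigma_{\min}(\mathcal R)\ge\sigma>0$ and $\|Y\|_F\le M$. Comparing with the feasible point $B=0$ gives $f(\mathcal R,Y,B^*)\le\frac12\|Y\|_F^2$. Hence $\|\mathcal RB^*\|_F\le2\|Y\|_F$, and so $\|B^*\|_F\le 2M/\sigma=:\rho$ on $N$. So on $N$ the problem may be restricted to the compact convex set $\mathcal B=\{B\succeq0:\|B\|_F\le\rho+1\}$, and the minimiser lies in its relative interior with respect to the ball constraint.

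\textbf{Continuity of the minimiser.} Continuity of $B^*$ follows by a subsequence argument. Take $(\mathcal R_n,Y_n)\to(\mathcal R,Y)$. The minimisers $B_n^*$ are bounded, so some subsequence converges to a limit $\bar B$. Passing to the limit in $f(\mathcal R_n,Y_n,B_n^*)\le f(\mathcal R_n,Y_n,B)$ for each fixed $B\succeq0$ shows that $\bar B$ is a minimiser for $(\mathcal R,Y)$. By uniqueness $\bar B=B^*(\mathcal R,Y)$, so the whole sequence converges. This is Berge's maximum theorem combined with uniqueness.

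\textbf{Differentiability via Danskin.} The map $\mathcal R\mapsto f(\mathcal R,Y,B)$ is a polynomial. Its partial derivative $\partial_{\mathcal R}f[\dot{\mathcal R}]=\langle\mathcal RB-Y,\dot{\mathcal R}B\rangle$ is jointly continuous on $N\times\mathcal B$. Danskin's theorem for a minimum over a compact set (Bonnans--Shapiro, Thm.~4.13) then gives the directional derivative
\[
V'(\mathcal R;\dot{\mathcal R})=\min_{B\in\text{argmin}}\partial_{\mathcal R}f[\dot{\mathcal R}].
\]
Because the argmin is the singleton $\{B^*\}$, this directional derivative is linear in $\dot{\mathcal R}$.

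To avoid quoting Danskin, I would give the two-sided bound directly. Write $\mathcal R_t=\mathcal R+t\dot{\mathcal R}$. The upper bound $V(\mathcal R_t)-V(\mathcal R)\le f(\mathcal R_t,Y,B^*)-f(\mathcal R,Y,B^*)$ holds because $B^*$ is feasible for the perturbed problem. The lower bound $V(\mathcal R_t)-V(\mathcal R)\ge f(\mathcal R_t,Y,B_t^*)-f(\mathcal R,Y,B_t^*)$ holds by optimality of $B^*$ at $\mathcal R$.

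Both right-hand sides equal $t\langle\mathcal RB-Y,\dot{\mathcal R}B\rangle+O(t^2\|\dot{\mathcal R}\|^2\|B\|^2)$ with $B=B^*$ or $B=B_t^*$. Continuity of $B^*$ forces the two expansions to agree to first order. Doing this with $\dot{\mathcal R}$ ranging over the unit sphere, uniformly, yields Fréchet differentiability. Here the $o(\|h\|)$ term is controlled by $\sup_{\|\dot{\mathcal R}\|\le1}|\langle\cdot\rangle(B^*_{\mathcal R+h})-\langle\cdot\rangle(B^*)|\to0$, which follows from continuity of $B^*$ and the uniform bound $\rho$.

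The derivative $\mathrm dV[\dot{\mathcal R}]=\langle\mathcal RB^*-Y,\dot{\mathcal R}B^*\rangle$ is continuous in $(\mathcal R,Y)$, again by continuity of $B^*$. This gives $C^1$. Since $T=\|Y\|_F^2-2V$, we get $\mathrm dT=-2\,\mathrm dV$, which is \eqref{eq:dT}.

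\textbf{Main obstacle and the final remark.} The only delicate point is uniformity of the remainder for the Fréchet (rather than Gateaux) derivative. This is exactly where joint continuity of $B^*$ and the uniform bound on $N$ are used. No differentiability of $B^*$, and hence no strong regularity of the KKT system \eqref{eq:KKT}, ever enters, which justifies the final remark of the statement.
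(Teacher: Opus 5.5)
Your proposal is correct and follows essentially the same route as the paper's proof (in its supplementary expansion): a uniform coercivity bound from comparison with $B=0$, then continuity of the unique minimiser via the maximum theorem, then the two-sided envelope comparison using $B^*(\mathcal R,Y)$ and $B^*(\mathcal R+t\dot{\mathcal R},Y)$ as feasible points, with continuity of the minimiser closing the gap. Your explicit control of the remainder uniformly over unit directions spells out the Fr\'echet (rather than G\^ateaux) step that the paper leaves implicit.
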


\begin{proof}
On a sufficiently small neighbourhood of an invertible $\mathcal R_0$, the
smallest singular value of $\mathcal R$ is bounded below by a positive
constant.  Hence the objective in \eqref{eq:value-problem} is uniformly
strongly convex and uniformly coercive in $B$.  The minimiser is therefore
unique and remains in a common compact set when $(\mathcal R,Y)$ ranges over
a compact neighbourhood.  Standard parametric-minimisation arguments imply
continuity of $B^*(\mathcal R,Y)$.  Since the feasible cone is fixed and the
objective is continuously differentiable, the envelope theorem gives
\begin{equation}
\mathrm dV(\mathcal R,Y)[\dot{\mathcal R}]
=
\left\langle
\mathcal RB^*-Y,
\dot{\mathcal R}B^*
\right\rangle.
\end{equation}
Continuity of the right-hand side yields continuous Fr\'echet
differentiability.  Equation \eqref{eq:dT} follows from the definition of
$T$.
\end{proof}

The value-differentiation step is an application of standard
parametric-optimisation and envelope arguments; see
\citet{BonnansShapiro2000}.  It is important to distinguish differentiability
of the optimal value from differentiability of the metric projection or of
the full primal--dual solution map \citep{Shapiro2016}.  Strong regularity of
generalized equations goes back to \citet{Robinson1980}; its
semidefinite-programming formulation is discussed by \citet{ChanSun2008}.
Related statistical perturbation questions for semidefinite programs are
treated by \citet{Shapiro2019}.  Differentiation and generalized derivatives
of probability functions under moving inequality-defined sets are developed,
from complementary viewpoints, by \citet{Uryasev1994} and
\citet{HantouteHenrionPerezAros2017}.
The critical-value calculation below only requires the value derivative in
Proposition~\ref{prop:value-sensitivity}; strong regularity is needed only if
one also wants derivatives of $(B^*,\Lambda^*)$.

For linear operators $\mathcal L_1,\mathcal L_2$ on $\Smat^k$, the
Hilbert--Schmidt inner product is
$\langle\mathcal L_1,\mathcal L_2\rangle_{\mathrm{HS}}
=\tr(\mathcal L_1^*\mathcal L_2)$, with associated norm
$\|\cdot\|_{\mathrm{HS}}$.  Let $E^*=\mathcal RB^*-Y$ and define
\begin{equation}
M_T(\mathcal R,Y)
=
-\{E^*\otimes B^*+B^*\otimes E^*\},
\label{eq:MT}
\end{equation}
where $(A\otimes B)(H)=\langle B,H\rangle A$.  For self-adjoint
perturbations,
\begin{equation}
\mathrm dT(\mathcal R,Y)[\dot{\mathcal R}]
=
\langle M_T(\mathcal R,Y),\dot{\mathcal R}\rangle_{\mathrm{HS}}.
\label{eq:dT-HS}
\end{equation}

\subsection{Distribution and quantile derivatives}

Let
\begin{equation}
F_{\mathcal R}(c)
=
\Pr\{T(\mathcal R,Y)\le c\},
\qquad
q_\alpha(\mathcal R)
=
F_{\mathcal R}^{-1}(1-\alpha),
\label{eq:Fq}
\end{equation}
where
$F^{-1}(u)=\inf\{c\in\mathbb R:F(c)\ge u\}$.
The law may have an atom at zero, so differentiation is restricted to
positive levels.

The positive-level geometry is simpler than it first appears.  If
$C=\mathcal R(\Splus^k)$, then
\begin{equation}
T(\mathcal R,Y)
=
\|\Pi_C(Y)\|_F^2
=
\dist_F^2(Y,C^{\circ}).
\label{eq:T-polar-distance}
\end{equation}
Squared distance to a closed convex set is continuously differentiable.
Therefore
\begin{equation}
\nabla_YT(\mathcal R,Y)
=
2\Pi_C(Y)
=
2\mathcal RB^*(\mathcal R,Y).
\label{eq:gradient-Y}
\end{equation}
On the level set $T=c>0$,
$\|\nabla_YT\|_F=2\sqrt c$.  Thus every positive level is regular;
no separate transversality assumption is needed.

Let $d=k(k+1)/2$, let
$\phi(Y)=(2\pi)^{-d/2}\exp(-\|Y\|_F^2/2)$ be the standard Gaussian density
in Frobenius coordinates, and let $\mathcal H^{d-1}$ denote
$(d-1)$-dimensional Hausdorff surface measure.  For a self-adjoint direction
$D$, define the level-set functional
\begin{equation}
h_{\widetilde{\mathcal R},D}(t)
=
\int_{\{Y:T(\widetilde{\mathcal R},Y)=t\}}
\frac{
\mathrm dT(\widetilde{\mathcal R},Y)[D]
}{
\|\nabla_YT(\widetilde{\mathcal R},Y)\|_F
}
\phi(Y)\,\mathrm d\mathcal H^{d-1}(Y),
\label{eq:level-functional}
\end{equation}

\begin{assumption}[Conditional level-set regularity]
\label{ass:surface-domination}
For the positive level $c$ and operator $\mathcal R$ under consideration,
there are neighbourhoods $\mathcal U$ of $\mathcal R$ and
$J\subset(0,\infty)$ of $c$ such that:
\begin{enumerate}
\item $h_{\widetilde{\mathcal R},D}(t)$ is finite for
$\widetilde{\mathcal R}\in\mathcal U$, $t\in J$, and every
self-adjoint $D$;

\item the map in \eqref{eq:level-functional} is linear and continuous in
$D$, and
\begin{equation}
\sup_{\|D\|_{\mathrm{HS}}\le1}
\left|
h_{\widetilde{\mathcal R},D}(t)-h_{\mathcal R,D}(c)
\right|
\longrightarrow0
\label{eq:uniform-level-continuity}
\end{equation}
whenever $(\widetilde{\mathcal R},t)\to(\mathcal R,c)$;

\item the surface integrals in \eqref{eq:level-functional} are uniformly
tight in the Gaussian tails over
$\widetilde{\mathcal R}\in\mathcal U$, $t\in J$, and
$\|D\|_{\mathrm{HS}}\le1$.
\end{enumerate}
\end{assumption}

\begin{theorem}[Conditional positive-level Hadamard shape derivative]
\label{thm:quantile-sensitivity}
Under Assumption~\ref{ass:surface-domination}, the map
$\mathcal R\mapsto F_{\mathcal R}(c)$ is Hadamard differentiable,
tangentially to the self-adjoint operator space, and
\begin{equation}
\mathrm d_HF_{\mathcal R}(c)[D]
=
-h_{\mathcal R,D}(c)
=
-
\frac{1}{2\sqrt c}
\int_{\{Y:T(\mathcal R,Y)=c\}}
\mathrm dT(\mathcal R,Y)[D]
\phi(Y)\,\mathrm d\mathcal H^{d-1}(Y).
\label{eq:dF}
\end{equation}
That is, for every $t_n\to0$ and every sequence of self-adjoint
$D_n\to D$ such that $\mathcal R+t_nD_n$ remains invertible,
\begin{equation}
\frac{
F_{\mathcal R+t_nD_n}(c)-F_{\mathcal R}(c)
}{t_n}
\longrightarrow
\mathrm d_HF_{\mathcal R}(c)[D].
\label{eq:Hadamard-definition}
\end{equation}
If $q_\alpha(\mathcal R)>0$, the continuous component has a density
$f_{\mathcal R}$ that is continuous and positive at
$q_\alpha(\mathcal R)$, and Assumption~\ref{ass:surface-domination}
holds on a neighbourhood of that level, then
$\mathcal R\mapsto q_\alpha(\mathcal R)$ is Hadamard differentiable
with
\begin{equation}
\mathrm d_Hq_\alpha(\mathcal R)[D]
=
-
\frac{
\mathrm d_HF_{\mathcal R}(q_\alpha)[D]
}{
f_{\mathcal R}(q_\alpha)
}.
\label{eq:dq}
\end{equation}
\end{theorem}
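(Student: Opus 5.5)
The plan is to write the distribution function as an integral of the level function along the parameter path, and to differentiate that representation. Fix a sequence $t_n\to0$ and self-adjoint $D_n\to D$, and put $\mathcal R_u=\mathcal R+uD_n$ for $u\in[0,t_n]$. All these operators lie in $\mathcal U$ for large $n$. For a fixed $Y$, Proposition~\ref{prop:value-sensitivity} makes $u\mapsto T(\mathcal R_u,Y)$ continuously differentiable, with derivative $\mathrm dT(\mathcal R_u,Y)[D_n]$.

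Rather than differentiate an indicator, I would smooth it. Take $\chi_\varepsilon$, a smooth nonincreasing approximation of $\mathbf 1_{(-\infty,c]}$, and consider $F^\varepsilon_{\mathcal R}(c)=\mathbb E\,\chi_\varepsilon\{T(\mathcal R,Y)\}$. Differentiating under the expectation in $u$ gives the following; the step is justified by the growth bound $|\mathrm dT[D]|\le K\|D\|_{\mathrm{HS}}(1+\|Y\|_F^2)$ obtained from \eqref{eq:dT} and the coercivity bound on $B^*$.
\begin{equation}
F^\varepsilon_{\mathcal R_{t_n}}(c)-F^\varepsilon_{\mathcal R}(c)=\int_0^{t_n}\mathbb E\bigl[\chi_\varepsilon'\{T(\mathcal R_u,Y)\}\,\mathrm dT(\mathcal R_u,Y)[D_n]\bigr]\,\mathrm du .
\end{equation}

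Next I would apply the coarea formula to $Y\mapsto T(\mathcal R_u,Y)$. This map is $C^1$ in $Y$ by \eqref{eq:gradient-Y}, and $\|\nabla_YT\|_F=2\sqrt t$ on the level set $\{T=t\}$ with $t>0$. The inner expectation therefore equals $\int\chi_\varepsilon'(t)\,h_{\mathcal R_u,D_n}(t)\,\mathrm dt$. Once $\chi'_\varepsilon$ is supported in $J$, part 1 of Assumption~\ref{ass:surface-domination} makes this finite, and part 3 controls the Gaussian tails when exchanging integrals. Letting $\varepsilon\to0$ then gives
\begin{equation}
F_{\mathcal R_{t_n}}(c)-F_{\mathcal R}(c)=-\int_0^{t_n}h_{\mathcal R_u,D_n}(c)\,\mathrm du .
\end{equation}
This limit needs two facts. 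First, $h$ is continuous in $t$ by part 2. Second, $c>0$ is not an atom of the law, which follows because the level sets have $\mathcal H^{d-1}$-finite weighted measure and the gradient does not vanish.

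Dividing by $t_n$ and using linearity in $D$, I would bound $|h_{\mathcal R_u,D_n}(c)-h_{\mathcal R,D}(c)|$ by
\begin{equation}
\sup_{\|D'\|\le1}|h_{\mathcal R_u,D'}(c)-h_{\mathcal R,D'}(c)|\,\|D_n\|+|h_{\mathcal R,D_n-D}(c)| .
\end{equation}
Both terms tend to $0$ by \eqref{eq:uniform-level-continuity} and the continuity in part 2. This yields \eqref{eq:Hadamard-definition} together with \eqref{eq:dF}, where the factor $1/(2\sqrt c)$ comes from the gradient norm.

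For the quantile, I would use the standard inverse-map argument. Write $c_n=q_\alpha(\mathcal R_{t_n})$. Continuity from Proposition~\ref{prop:law-continuity} gives $c_n\to c:=q_\alpha(\mathcal R)$. The same integral representation, now applied jointly in $(u,t)$ near $(0,c)$ and using the uniform continuity on $J$, gives
\begin{equation}
F_{\mathcal R_{t_n}}(c_n)-F_{\mathcal R}(c_n)=t_n\{\mathrm d_HF_{\mathcal R}(c)[D]+o(1)\}.
\end{equation}
A continuous positive density at $c$ gives $F_{\mathcal R}(c_n)-F_{\mathcal R}(c)=f_{\mathcal R}(c)(c_n-c)+o(|c_n-c|)$. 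Because $F_{\mathcal R_{t_n}}$ is continuous near $c$, we also have $F_{\mathcal R_{t_n}}(c_n)=1-\alpha=F_{\mathcal R}(c)$. Subtracting these expansions gives $(c_n-c)/t_n\to-\mathrm d_HF/f$, which is \eqref{eq:dq}.

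The main obstacle is the smoothing and coarea step: making rigorous the exchange of the $u$-derivative, the level-set disintegration, and the limit $\varepsilon\to0$ uniformly along the moving operators. This is exactly where Assumption~\ref{ass:surface-domination} is consumed. I would first try dominated convergence using the tail tightness in part 3, and fall back on the Uryasev-type formula if needed.
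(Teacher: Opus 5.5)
\emph{Verdict.} Your proof is correct in substance, modulo one reordering of limits described in the second paragraph, and it follows a genuinely different route from the paper's.

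\emph{Comparison of routes.} The paper works locally and geometrically. It truncates to a ball and covers the compact regular level set by implicit-function graph charts that are uniform in the operator. It then differentiates the moving boundary graph through a partition of unity (Lemma~\ref{lem:supp-regular-sublevel}), and uses the Gaussian-tail part~3 of Assumption~\ref{ass:surface-domination} to send the truncation radius to infinity (Proposition~\ref{prop:supp-Hadamard-probability}). The quantile formula is then quoted from the Hadamard implicit-function theorem. You instead mollify the indicator, integrate $u\mapsto T(\mathcal R+uD_n,Y)$ along the segment, and disintegrate with the coarea formula for the $C^{1,1}$ map $T(\mathcal R_u,\cdot)$. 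This turns the difference quotient into an average of $-h_{\mathcal R_u,D_n}(t)$ over $(u,t)$ in a shrinking neighbourhood of $(0,c)$. That makes transparent why continuity of the level functional at the base point is exactly the right hypothesis. Your argument is global: it needs no charts, no truncation and no uniform tube expansion. Moreover, the Fubini and coarea exchanges are already justified by the bound $|\mathrm dT(\widetilde{\mathcal R},Y)[D]|\le K\|D\|_{\mathrm{HS}}\|Y\|_F^2$ together with Gaussian integrability. So part~3 of the assumption is in fact never needed in your route, although you invoke it. In exchange, the paper's route yields a reusable local lemma for general $G_\vartheta$ and weights, with an explicit normal boundary velocity $-\dot G/\|\nabla G\|$. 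Your direct inverse-map argument for the quantile is an unpacked version of the implicit-function step and is sound.

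\emph{Step to reorder.} For fixed $n$, you let $\varepsilon\to0$ to obtain the exact identity $F_{\mathcal R_{t_n}}(c)-F_{\mathcal R}(c)=-\int_0^{t_n}h_{\mathcal R_u,D_n}(c)\,\mathrm du$. That requires continuity of $t\mapsto h_{\mathcal R_u,D_n}(t)$ at $c$ for every $u\neq0$. Part~2 of the assumption only gives continuity at the single point $(\mathcal R,c)$, so the identity is not justified as written. The fix is to keep $\varepsilon>0$:
\begin{itemize}
\item Take $\chi_\varepsilon$ with $\chi_\varepsilon=1$ on $(-\infty,c]$ and $-\chi_\varepsilon'$ a probability density on $[c,c+\varepsilon]\subset J$.
\item Bound $\bigl|t_n^{-1}\{F^\varepsilon_{\mathcal R_{t_n}}(c)-F^\varepsilon_{\mathcal R}(c)\}+h_{\mathcal R,D}(c)\bigr|$ by $\sup\{|h_{\mathcal R_u,D_n}(t)-h_{\mathcal R,D}(c)|:u\in[0,t_n],\ t\in[c,c+\varepsilon]\}$. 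Your triangle-inequality estimate makes this supremum small for $n$ large and $\varepsilon$ small.
\item Only then let $\varepsilon\downarrow0$, using dominated convergence, since $\chi_\varepsilon\to\mathbf 1_{(-\infty,c]}$ pointwise.
\end{itemize}
This also makes the discussion of atoms at $c$ unnecessary. It is precisely the averaging you already carry out at the moving level $c_n$ in the quantile step.
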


\begin{proof}[Proof sketch]
Use the regular positive-level tube generated by
\eqref{eq:gradient-Y}.  The continuous Fr\'echet expansion of
$T(\mathcal R,Y)$ in the operator argument moves the boundary by
$-t\,\mathrm dT(\mathcal R,Y)[D]/\|\nabla_YT\|_F$ in its normal
direction.  The uniform continuity in
\eqref{eq:uniform-level-continuity} and the Gaussian-tail condition make
this expansion uniform for perturbation sequences $D_n\to D$.  Integration
in tubular coordinates gives \eqref{eq:dF} and
\eqref{eq:Hadamard-definition}.  The quantile formula follows from the
Hadamard implicit-function theorem.  This is only a proof sketch.  Lemma~\ref{lem:supp-regular-sublevel}
constructs the regular moving sublevel-set argument, and
Proposition~\ref{prop:supp-Hadamard-probability} applies it to the Gaussian
projection statistic and gives the complete proof.
\end{proof}

The theorem is deliberately conditional: Assumption~\ref{ass:surface-domination}
contains the uniform regularity needed to differentiate the probability of a
moving sublevel set.  We do not assert it uniformly over all
$\mathcal R\in\mathfrak P_k$.  Section~\ref{sec:supp-sensitivity} of the supplementary material proves a general regular-sublevel lemma and verifies each remaining ingredient
for the present statistic; continuity of the level functional is retained as
the explicit model-dependent condition.  For the family
$\mathcal C_\gamma$ in Section~\ref{sec:applications}, the distribution is an
explicit differentiable chi-bar-square mixture, providing an independent
verification of the derivative used numerically.

\subsection{Chain rule and geometric least-favourable orientation}

Return to a rank-$r$ null point and put $k=q-r$.  We represent the
Grassmann manifold $\Gr(k,q)$ as the set of rank-$k$ orthogonal projectors
$P=P^\top=P^2$ on $\mathbb R^q$.  Its tangent space at $P$ is
\begin{equation}
T_P\Gr(k,q)=
\{\dot P\in\Smat^q:P\dot PP=0,\ (I-P)\dot P(I-P)=0\},
\label{eq:Grassmann-tangent-definition}
\end{equation}
so tangent matrices have only off-diagonal blocks relative to
$\operatorname{range}(P)\oplus\ker(P)$.  Choose a smooth local orthonormal
frame $U(P)\in\mathbb R^{q\times k}$ satisfying $U(P)^\top U(P)=I_k$ and
$U(P)U(P)^\top=P$.  For $\dot P\in T_P\Gr(k,q)$, the horizontal lift used
here is $\dot U=\dot P U$.  Define
\begin{equation}
\mathcal A_P(H)=U(P)^\top H U(P),
\quad
\mathcal S_P=\mathcal A_P\mathcal I_{\mathrm{eff}}^{-1}\mathcal A_P^*,
\quad
\mathcal R_P=\mathcal S_P^{-1/2}.
\label{eq:Grassmann-active-definitions}
\end{equation}
At fixed $\mathcal I_{\mathrm{eff}}$,
\begin{equation}
\dot{\mathcal S}_P
=
\dot{\mathcal A}_P\mathcal I_{\mathrm{eff}}^{-1}\mathcal A_P^*
+
\mathcal A_P\mathcal I_{\mathrm{eff}}^{-1}\dot{\mathcal A}_P^*,
\label{eq:dSP}
\end{equation}
where
$\dot{\mathcal A}_P(H)=\dot U^{\top}HU+U^{\top}H\dot U$.
If $\mathcal L_P=\mathcal S_P^{1/2}$, let
$\dot{\mathcal L}_P$ denote the directional derivative of the operator
square-root map at $\mathcal S_P$ in direction $\dot{\mathcal S}_P$.  It is
the unique solution of
\begin{equation}
\mathcal L_P\dot{\mathcal L}_P
+
\dot{\mathcal L}_P\mathcal L_P
=
\dot{\mathcal S}_P,
\qquad
\dot{\mathcal R}_P
=
-\mathcal R_P\dot{\mathcal L}_P\mathcal R_P.
\label{eq:dR}
\end{equation}
Combining \eqref{eq:dq}--\eqref{eq:dR} identifies the Hadamard directional
derivative $\mathrm d_Hq_\alpha(P)[\dot P]$.  If the level-set functional in
\eqref{eq:dF} can be evaluated and its pullback admits an ambient Riesz
representative $\Gamma_P\in\Smat^q$, meaning that the ambient differential
has the form $\langle\Gamma_P,\dot P\rangle_F$, then, with the commutator
$[A,B]=AB-BA$, the intrinsic gradient for the Frobenius metric on projectors
is
\begin{equation}
\operatorname{grad}q_\alpha(P)
=
[P,[P,\Gamma_P]].
\label{eq:grassmann-gradient}
\end{equation}
Thus the commutator formula is the geometric projection of an available
ambient differential.  A general Monte Carlo estimator of the surface
functional, and hence a fully operational high-dimensional Grassmannian
algorithm, is beyond the scope of this paper; the numerical example below
uses an independent closed-form derivative.

For a compact admissible set $\mathcal G\subset\Gr(k,q)$, define the
geometric least-favourable value
\begin{equation}
q_\alpha^{\mathrm{geo}}
=
\sup_{P\in\mathcal G}q_\alpha(P).
\label{eq:geo-LF}
\end{equation}
Continuity gives existence of a maximiser.  At an interior differentiable
maximiser $P_*$, \eqref{eq:grassmann-gradient} vanishes, equivalently
$(I-P_*)\Gamma_{P_*}P_*=0$.  This problem varies orientation at fixed
information; it is not the same as the stratified envelope
\eqref{eq:stratified-envelope}, where the true rank and the information may
both vary.

\section{Applications and numerical validation}
\label{sec:applications}

\subsection{General Gaussian covariance-component reduction}

The model-specific front end can be made explicit for a broad class of
Gaussian covariance models.  Let $X_1,\ldots,X_n$ be independent and
identically distributed with
\begin{equation}
X_i\sim\mathcal N_m\{0,V(\psi,\Sigma)\},
\qquad
V(\psi,\Sigma)=R(\psi)+L\Sigma L^{\top},
\label{eq:general-gaussian-model}
\end{equation}
where $R(\psi_0)\succ0$ and $L\in\mathbb R^{m\times q}$ is fixed.  Define
the local covariance derivative maps
\begin{equation}
\mathcal D_\psi a
=\mathrm dR(\psi_0)[a],
\qquad
\mathcal D_\Sigma H=LHL^{\top},
\label{eq:derivative-maps}
\end{equation}
where $\mathrm dR(\psi_0)[a]$ is the directional derivative of the smooth
map $R$ at $\psi_0$ in direction $a$, and equip $\Smat^m$ with the Gaussian Fisher inner product
\begin{equation}
\langle A,B\rangle_{V_0}
=
\frac12\tr(V_0^{-1}AV_0^{-1}B),
\qquad
V_0=V(\psi_0,\Sigma_0).
\label{eq:Fisher-inner}
\end{equation}
Let $\mathcal D_\psi^*$ and $\mathcal D_\Sigma^*$ denote the adjoints with
respect to this inner product and the Euclidean--Frobenius inner products.
If $\mathcal D_\psi^*\mathcal D_\psi$ is invertible, define the
$V_0$-orthogonal projector onto the nuisance covariance tangent space by
\begin{equation}
\Pi_\psi
=
\mathcal D_\psi
(\mathcal D_\psi^*\mathcal D_\psi)^{-1}
\mathcal D_\psi^*.
\label{eq:nuisance-projector}
\end{equation}

\begin{proposition}[Efficient information in Gaussian covariance models]
\label{prop:Gaussian-Ieff}
For \eqref{eq:general-gaussian-model},
\begin{equation}
\mathcal I_{\mathrm{eff}}
=
\mathcal D_\Sigma^*(I-\Pi_\psi)\mathcal D_\Sigma.
\label{eq:Gaussian-Ieff}
\end{equation}
Equivalently,
\begin{equation}
\langle H,\mathcal I_{\mathrm{eff}}K\rangle
=
\left\langle
(I-\Pi_\psi)\mathcal D_\Sigma H,
(I-\Pi_\psi)\mathcal D_\Sigma K
\right\rangle_{V_0}.
\label{eq:Gaussian-Ieff-form}
\end{equation}
The operator is positive definite precisely when
$(I-\Pi_\psi)\mathcal D_\Sigma$ is injective.
\end{proposition}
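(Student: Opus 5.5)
The plan is to compute the full Fisher information of the Gaussian model at $\theta_0$ in block form and then apply the Schur-complement definition \eqref{eq:efficient-information}, recognising the result as a projection formula in the Fisher geometry.

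\textbf{Step 1 (Fisher information as a Gram operator).} For a centred Gaussian family $X\sim\mathcal N_m\{0,V(\theta)\}$, the score in direction $h$ is
\[
\tfrac12\tr\{V_0^{-1}\dot V V_0^{-1}XX^\top\}-\tfrac12\tr(V_0^{-1}\dot V),
\qquad \dot V=\mathrm dV(\theta_0)[h].
\]
The covariance of two such scores is the classical
\[
\tfrac12\tr(V_0^{-1}\dot V_1V_0^{-1}\dot V_2)=\langle \dot V_1,\dot V_2\rangle_{V_0}.
\]
By \eqref{eq:derivative-maps}, $\mathrm dV(\theta_0)[(a,H)]=\mathcal D_\psi a+\mathcal D_\Sigma H$. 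Hence $\mathcal I=\mathcal D^*\mathcal D$ with $\mathcal D=(\mathcal D_\psi,\mathcal D_\Sigma)$, which gives the blocks
\[
\mathcal I_{\psi\psi}=\mathcal D_\psi^*\mathcal D_\psi,\quad
\mathcal I_{\psi\Sigma}=\mathcal D_\psi^*\mathcal D_\Sigma,\quad
\mathcal I_{\Sigma\psi}=\mathcal D_\Sigma^*\mathcal D_\psi,\quad
\mathcal I_{\Sigma\Sigma}=\mathcal D_\Sigma^*\mathcal D_\Sigma,
\]
with all adjoints taken for $\langle\cdot,\cdot\rangle_{V_0}$ as in the statement. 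I take the LAN form of Assumption~\ref{ass:LAN} for i.i.d.\ Gaussian data as standard; only the identification of $\mathcal I$ is needed here.

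\textbf{Step 2 (Schur complement).} Substituting into \eqref{eq:efficient-information} gives
\[
\mathcal I_{\mathrm{eff}}
=\mathcal D_\Sigma^*\mathcal D_\Sigma-\mathcal D_\Sigma^*\mathcal D_\psi(\mathcal D_\psi^*\mathcal D_\psi)^{-1}\mathcal D_\psi^*\mathcal D_\Sigma
=\mathcal D_\Sigma^*(I-\Pi_\psi)\mathcal D_\Sigma,
\]
which is \eqref{eq:Gaussian-Ieff}. Invertibility of $\mathcal I_{\psi\psi}$ is exactly the stated hypothesis on $\mathcal D_\psi^*\mathcal D_\psi$. Next, $\Pi_\psi$ is self-adjoint and idempotent in $\langle\cdot,\cdot\rangle_{V_0}$, since $\Pi_\psi^2=\Pi_\psi$ follows by direct multiplication and self-adjointness is evident. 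Therefore $I-\Pi_\psi$ is also an orthogonal projector, so $\mathcal D_\Sigma^*(I-\Pi_\psi)\mathcal D_\Sigma=\{(I-\Pi_\psi)\mathcal D_\Sigma\}^*\{(I-\Pi_\psi)\mathcal D_\Sigma\}$, and this yields \eqref{eq:Gaussian-Ieff-form}.

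\textbf{Step 3 (positive definiteness).} From \eqref{eq:Gaussian-Ieff-form},
\[
\langle H,\mathcal I_{\mathrm{eff}}H\rangle=\|(I-\Pi_\psi)\mathcal D_\Sigma H\|_{V_0}^2.
\]
This expression is nonnegative, and it vanishes if and only if $(I-\Pi_\psi)\mathcal D_\Sigma H=0$. In finite dimensions, a positive semidefinite operator is positive definite exactly when its quadratic form has trivial kernel. So $\mathcal I_{\mathrm{eff}}$ is positive definite precisely when $(I-\Pi_\psi)\mathcal D_\Sigma$ is injective.

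\textbf{Main obstacle.} I expect the only real care to lie in Step 1. The Fisher metric must be written with the correct factor $1/2$, and the adjoints must be taken consistently with respect to $\langle\cdot,\cdot\rangle_{V_0}$ on $\Smat^m$ and the Frobenius/Euclidean products on the parameter side. The remaining steps are algebraic manipulation.
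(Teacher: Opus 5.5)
Your proposal is correct and follows essentially the same route as the paper's proof. Both identify the information blocks as the Gram operators $\mathcal D_\psi^*\mathcal D_\psi$, $\mathcal D_\psi^*\mathcal D_\Sigma$ and $\mathcal D_\Sigma^*\mathcal D_\Sigma$ from the Gaussian score covariance $\frac12\tr(V_0^{-1}AV_0^{-1}B)$, substitute them into the Schur complement, and use that $I-\Pi_\psi$ is a $\langle\cdot,\cdot\rangle_{V_0}$-orthogonal projector to get the quadratic-form identity and the injectivity criterion; your explicit checks that $\Pi_\psi$ is self-adjoint and idempotent simply fill in details the paper leaves implicit.
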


\begin{proof}
The full information blocks are
$\mathcal I_{\psi\psi}=\mathcal D_\psi^*\mathcal D_\psi$,
$\mathcal I_{\psi\Sigma}=\mathcal D_\psi^*\mathcal D_\Sigma$, and
$\mathcal I_{\Sigma\Sigma}=\mathcal D_\Sigma^*\mathcal D_\Sigma$.
Substitution in the Schur complement \eqref{eq:efficient-information}
gives \eqref{eq:Gaussian-Ieff}.  Since $I-\Pi_\psi$ is an orthogonal
projector, \eqref{eq:Gaussian-Ieff-form} and the injectivity statement
follow.
\end{proof}

Subsection~\ref{subsec:supp-Gaussian-information} records the trace
calculation and the adjoint formulas underlying this proposition.

This formula includes linear mixed-effects covariance components after
profiling residual covariance parameters.  Gaussian mean parameters are
Fisher-orthogonal to covariance directions and therefore do not alter
\eqref{eq:Gaussian-Ieff}.  The example below is deliberately simple enough
to permit an exact finite-sample likelihood calculation while retaining a
nontrivial covariance nuisance projection.

\subsection{A Gaussian covariance component with residual-variance nuisance}

Let $e_j$ denote the $j$th standard coordinate vector, set
$J=(e_1,e_2)\in\mathbb R^{3\times2}$, and consider
\begin{equation}
X_i\sim\mathcal N_3\{0,V(\tau,\Sigma)\},
\qquad
V(\tau,\Sigma)=\tau I_3+J\Sigma J^{\top},
\qquad
\tau>0,
\quad
\Sigma\succeq0.
\label{eq:nuisance-model}
\end{equation}
We test $H_0:\Sigma=0$ against $H_1:\Sigma\succeq0$ at
$(\tau_0,\Sigma_0)=(1,0)$.  The third coordinate identifies the residual
variance, so the nuisance and semidefinite components are not confounded.

In the orthonormal basis
$E_{11},E_{22},F_{12}=(e_1e_2^{\top}+e_2e_1^{\top})/\sqrt2$ of
$\Smat^2$, the unprofiled matrix information is $\frac12\Id$, the nuisance
information is $3/2$, and the cross-information vector is
$(1/2,1/2,0)^{\top}$.  Hence
\begin{equation}
\mathcal I_{\mathrm{eff}}
=
\begin{pmatrix}
1/3 & -1/6 & 0\\
-1/6 & 1/3 & 0\\
0 & 0 & 1/2
\end{pmatrix}.
\label{eq:nuisance-Ieff}
\end{equation}
For $B=x_1E_{11}+x_2E_{22}+x_3F_{12}\in\Smat^2$, define the
orthonormal Lorentz coordinates
$u=(x_1+x_2)/\sqrt2$, $v=(x_1-x_2)/\sqrt2$, and $w=x_3$.  In these
coordinates,
\begin{equation}
\mathcal I_{\mathrm{eff}}
=
\diag(1/6,1/2,1/2),
\qquad
\mathcal S
=
\diag(6,2,2).
\label{eq:nuisance-S}
\end{equation}
Thus the whitened cone is the circular cone
$u\ge (v^2+w^2)^{1/2}/\sqrt3$, with half-angle $\pi/3$.  Its intrinsic
volumes are
\begin{equation}
(v_0,v_1,v_2,v_3)
=
\left(
\frac{2-\sqrt3}{4},
\frac14,
\frac{\sqrt3}{4},
\frac14
\right),
\label{eq:nuisance-weights}
\end{equation}
where $v_2=1/2-v_0=\sqrt3/4$.  Thus
$(v_0,v_1,v_2,v_3)=(0.06699,0.25,0.43301,0.25)$, and the asymptotic
$5\%$ critical value is $6.1252334478$.

The finite-sample likelihood can be profiled exactly.  Let
$\widehat V_n=n^{-1}\sum_{i=1}^nX_iX_i^\top$ be the sample covariance,
let $d_1\ge d_2$ be the eigenvalues of its upper $2\times2$ block, and let
$d_3=(\widehat V_n)_{33}$ be the third sample variance.  Under the null,
$\widehat\tau_0=(d_1+d_2+d_3)/3$.  Under the alternative, for fixed
$t=\tau$ the optimal upper-block eigenvalues are
$\max(d_j,t)$, and the profiled negative log-likelihood is
\begin{equation}
g(t)
=
\log t+\frac{d_3}{t}
+
\sum_{j=1}^2
\left
\{
\log\max(d_j,t)
+
\frac{d_j}{\max(d_j,t)}
\right\}.
\label{eq:profile-g}
\end{equation}
Consequently,
\begin{equation}
\Lambda_n
=
n\left
\{
3\log\widehat\tau_0+3-
\min_{t>0}g(t)
\right\}.
\label{eq:exact-LRT-example}
\end{equation}

Table~\ref{tab:finite-size} reports rejection probabilities under the null
using the asymptotic critical value.  Each row uses $10^6$ independent
replications; Monte Carlo standard errors are about $2.2\times10^{-4}$.
The test is mildly conservative at small samples and approaches the nominal
level.

\begin{table}[t]
\caption{Finite-sample size in model \eqref{eq:nuisance-model} at nominal
level $5\%$ using critical value $6.1252334478$.}
\label{tab:finite-size}
\centering
\begin{tabular}{rrrr}
\toprule
$n$ & rejection probability & Monte Carlo s.e. & empirical $0.95$ quantile\\
\midrule
20   & 0.048668 & 0.000215 & 6.067122\\
50   & 0.048358 & 0.000215 & 6.054099\\
100  & 0.048242 & 0.000214 & 6.047978\\
250  & 0.048757 & 0.000215 & 6.070754\\
1000 & 0.049050 & 0.000216 & 6.085415\\
\bottomrule
\end{tabular}
\end{table}

This example demonstrates that nuisance profiling is not merely cosmetic:
without the Schur complement, the active covariance would be proportional
to the identity and the weights would be those of the self-dual
$\Splus^2$ cone rather than \eqref{eq:nuisance-weights}.

\subsection{A lower-stratum likelihood ratio and rank transitions}
\label{subsec:lower-stratum-finite}

To validate the nonconvex fixed-stratum limit, let
$X_1,\ldots,X_n$ be independent and identically distributed according to
\begin{equation}
X_i\sim\mathcal N_3(0,I_3+\Sigma),
\qquad
H_0:\ \Sigma\succeq0,\ \rank(\Sigma)\le1,
\qquad
H_1:\ \Sigma\succeq0,
\label{eq:lower-stratum-model}
\end{equation}
with known residual covariance $I_3$.  At $\Sigma_0=0$ the true rank is
$s=0<r=1$.  If $d_1\ge d_2\ge d_3$ are the eigenvalues of the sample
covariance $n^{-1}\sum_iX_iX_i^\top$, direct spectral profiling gives the
finite-sample likelihood ratio
\begin{equation}
\Lambda_n
=
n\sum_{j=2}^3
\{d_j-1-\log d_j\}\mathbf 1_{\{d_j>1\}}.
\label{eq:lower-stratum-LRT}
\end{equation}
The central fixed-stratum limit is
\begin{equation}
\Delta_{0,1}^{(0)}
=
\sum_{j=2}^3\{\lambda_j(Y)_+\}^2,
\label{eq:lower-central-limit}
\end{equation}
where $Y$ is standard Gaussian in $\Smat^3$ and its eigenvalues are
ordered decreasingly.  Its simulated $0.95$ quantile is $1.369475$.  For the
rank-one top stratum, the seed-9002 Monte Carlo diagnostic is $5.490850$.
The same active-dimension-two law is the $\gamma=1$ member of the analytic
chi-bar-square family below, whose $0.95$ quantile is
$5.4845131865391$.  The analytic value is used for rejection decisions; the
Monte Carlo value is retained only as a reproducibility diagnostic.

Now take the null triangular array
\begin{equation}
\Sigma_n=\diag(c/\sqrt n,0,0),
\qquad c\ge0.
\label{eq:transition-array}
\end{equation}
The local-alternative result gives the transition law
\begin{equation}
\Delta_{0,1}^{(c)}
=
\sum_{j=2}^3
\left[
\lambda_j\left(
Y+\frac{c}{\sqrt2}E_{11}
\right)_+
\right]^2.
\label{eq:transition-limit}
\end{equation}
Its $0.95$ quantiles are $1.369475$, $2.731123$, and $3.988097$ for
$c=0,2,4$, respectively.  Thus the central lower-stratum critical value is
not uniform along the interface.  Theorem~\ref{thm:isotropic-dominance}
shows, for every $c\ge0$, that the top-stratum value is conservative; the
simulation below illustrates this pathwise result in the exact likelihood.

Table~\ref{tab:lower-transition} compares the exact finite-sample
likelihood ratio with the corresponding transition law.  Transition
quantiles use $10^6$ Gaussian draws; each finite-sample row uses
$5\times10^5$ Wishart draws.  The column labelled $s=0$ uses the central
critical value $1.369475$, the column labelled top uses the analytic value
$5.4845131865391$, and the final column uses the $c$-specific transition
quantile.  Rejection probabilities are rounded to four decimal places.

\begin{table}[t]
\caption{Lower-stratum and transition calibration in
\eqref{eq:lower-stratum-model}.}
\label{tab:lower-transition}
\centering
\small
\begin{tabular}{rr@{\quad}rr@{\quad}rrr}
\toprule
$n$ & $c$ & empirical $q_{0.05}$ & limit $q_{0.05}$
& rejection at $s=0$ & rejection at top & rejection at transition\\
\midrule
50   & 0 & 1.077873 & 1.369475 & 0.0336 & 0.0003 & 0.0336\\
50   & 2 & 2.210077 & 2.731123 & 0.1089 & 0.0028 & 0.0313\\
50   & 4 & 3.091132 & 3.988097 & 0.1788 & 0.0092 & 0.0263\\
200  & 0 & 1.219466 & 1.369475 & 0.0413 & 0.0004 & 0.0413\\
200  & 2 & 2.454491 & 2.731123 & 0.1319 & 0.0040 & 0.0393\\
200  & 4 & 3.512720 & 3.988097 & 0.2186 & 0.0130 & 0.0360\\
1000 & 0 & 1.289099 & 1.369475 & 0.0452 & 0.0005 & 0.0452\\
1000 & 2 & 2.613732 & 2.731123 & 0.1461 & 0.0046 & 0.0453\\
1000 & 4 & 3.766082 & 3.988097 & 0.2431 & 0.0161 & 0.0433\\
\bottomrule
\end{tabular}
\end{table}

The experiment confirms both aspects of the stratified theory: the
nonconvex lower-stratum law is the correct pointwise approximation, and a
local eigenvalue of order $n^{-1/2}$ changes the limit continuously away
from that central law.  The accompanying R script, the exact random seeds,
and the correspondence between generated CSV files and reported tables are
documented in Section~\ref{sec:supp-reproducibility} of the supplementary material.

\subsection{An anisotropic active-dimension-two family}

Consider the Gaussian matrix-location observation
$Y\sim\mathcal N_{\Smat^3}(0,\mathcal V)$, where the covariance operator
$\mathcal V$ is diagonal in the Frobenius-orthonormal basis $E_{ii}$ and
$F_{ij}$, with unit variances except
\begin{equation}
\mathcal VF_{12}=\gamma_{12}F_{12},
\qquad
\mathcal VF_{13}=\gamma_{13}F_{13}.
\label{eq:anisotropic-V}
\end{equation}
For the kernel plane
$K_\theta=\operatorname{span}\{e_1,\cos\theta\,e_2+\sin\theta\,e_3\}$, the reduced operator acts on $\Smat^2$.  Identifying
$B=x_1E_{11}+x_2E_{22}+x_3F_{12}$ with the Lorentz coordinates
$u=(x_1+x_2)/\sqrt2$, $v=(x_1-x_2)/\sqrt2$, and $w=x_3$, that operator is
\begin{equation}
\mathcal S_\theta
=
\diag\{1,1,\gamma(\theta)\},
\qquad
\gamma(\theta)
=
\gamma_{12}\cos^2\theta+
\gamma_{13}\sin^2\theta.
\label{eq:gamma-theta}
\end{equation}
The canonical cone is
\begin{equation}
\mathcal C_\gamma
=
\{(u,v,w):u\ge(v^2+\gamma w^2)^{1/2}\}.
\label{eq:Cgamma}
\end{equation}
Its solid angle $\Omega(\gamma)$, meaning the surface area of
$\mathcal C_\gamma\cap\{x\in\mathbb R^3:\|x\|_2=1\}$ on the unit sphere,
is
\begin{equation}
\Omega(\gamma)
=
\int_0^{2\pi}
\left
\{
1-
\frac{a_\gamma(\phi)}
{\sqrt{1+a_\gamma(\phi)^2}}
\right\}\,\mathrm d\phi,
\qquad
 a_\gamma(\phi)
=
\sqrt{\cos^2\phi+\gamma\sin^2\phi}.
\label{eq:Omega}
\end{equation}
The weights are
\begin{equation}
\begin{aligned}
v_3(\gamma)&=\Omega(\gamma)/(4\pi),
&v_0(\gamma)&=\Omega(\gamma^{-1})/(4\pi),\\
v_1(\gamma)&=1/2-v_3(\gamma),
&v_2(\gamma)&=1/2-v_0(\gamma).
\end{aligned}
\label{eq:gamma-weights}
\end{equation}
Moreover,
\begin{equation}
\Omega'(\gamma)
=
-\frac12
\int_0^{2\pi}
\frac{\sin^2\phi}
{a_\gamma(\phi)\{1+a_\gamma(\phi)^2\}^{3/2}}
\,\mathrm d\phi<0.
\label{eq:Omega-prime}
\end{equation}
Thus orientation changes the limiting law whenever
$\gamma_{12}\ne\gamma_{13}$.  Moreover, if
$\gamma_2>\gamma_1$, then
$\mathcal C_{\gamma_2}\subset\mathcal C_{\gamma_1}$.  For the same
Gaussian draw, Moreau's identity \citep{Moreau1962} therefore gives
$\|\Pi_{\mathcal C_{\gamma_2}}Y\|_F^2\le
\|\Pi_{\mathcal C_{\gamma_1}}Y\|_F^2$.  Hence every upper quantile is nonincreasing in $\gamma$.  The weak
ordering is all that is required below.  Strict decrease holds at the
reported confidence level over the evaluated range, as confirmed by the
explicit derivative and numerical values, but no general statement about
strict ordering of every positive quantile is used.

\begin{table}[t]
\caption{Selected anisotropic chi-bar-square calibrations.}
\label{tab:gamma}
\centering
\begin{tabular}{c@{\quad}cccc@{\quad}c}
\toprule
$\gamma$ & $v_0$ & $v_1$ & $v_2$ & $v_3$ & $q_{0.05}$\\
\midrule
0.25 & 0.087789 & 0.299751 & 0.412211 & 0.200249 & 5.877087\\
1.00 & 0.146447 & 0.353553 & 0.353553 & 0.146447 & 5.484513\\
4.00 & 0.200249 & 0.412211 & 0.299751 & 0.087789 & 5.005473\\
\bottomrule
\end{tabular}
\end{table}

The same elliptic cone also gives a genuine anisotropic lower-stratum
experiment.  Take $q=2$, $r=1$, and $s=0$.  In the Lorentz coordinates of
\eqref{eq:Cgamma}, the alternative tangent is $\mathcal C_\gamma$ and the
null tangent is its rank-one boundary
\begin{equation}
\mathcal D_\gamma=\partial\mathcal C_\gamma.
\label{eq:lower-anisotropic-boundary}
\end{equation}
Consequently,
\begin{equation}
\Delta_\gamma^{\mathrm{low}}(Y)
=
\dist^2(Y,\mathcal D_\gamma)-\dist^2(Y,\mathcal C_\gamma).
\label{eq:lower-anisotropic-stat}
\end{equation}
For $Y\notin\mathcal C_\gamma$, the projection onto the cone lies on its
boundary and the two distances coincide.  For
$Y\in\operatorname{int}(\mathcal C_\gamma)$, the second distance vanishes
and the statistic is the squared distance to the boundary.  Hence the atom
at zero has mass $1-v_3(\gamma)$.  Table~\ref{tab:lower-anisotropic} reports
$10^6$ Gaussian replications for each value of $\gamma$.  The top-stratum
critical value in active dimension one is $2.705543$.

\begin{table}[t]
\caption{Anisotropic lower-stratum law for $q=2$, $r=1$, $s=0$.}
\label{tab:lower-anisotropic}
\centering
\small
\begin{tabular}{c@{\quad}cc@{\quad}cc@{\quad}c}
\toprule
$\gamma$ & empirical atom & theoretical atom & mean & $q_{0.05}$
& rejection at top\\
\midrule
0.25 & 0.800194 & 0.799751 & 0.071468 & 0.444867 & 0.001945\\
1.00 & 0.853997 & 0.853553 & 0.042965 & 0.227754 & 0.000832\\
4.00 & 0.911943 & 0.912211 & 0.012152 & 0.035953 & 0.000007\\
\bottomrule
\end{tabular}
\end{table}

This example directly exercises the nonconvex lower-stratum projection under
anisotropy.  Its law is visibly different from the top-stratum
chi-bar-square law.  Because $q-r=1$, Proposition~\ref{prop:corank-one-transition}
shows that the top critical value is conservative not only at the central
lower stratum reported in the table, but along every local null transition
for each of these operators.  No analogous ordering is asserted for a
general anisotropic active block of dimension at least two.

\subsection{Derivative and ascent checks}

For the family \eqref{eq:Cgamma}, the quantile derivative can be evaluated
without estimating a surface integral.  Let $F_{\chi_j^2}$ and
$f_{\chi_j^2}$ denote the distribution function and density of
$\chi_j^2$ for $j\ge1$, with $F_{\chi_0^2}(c)=\mathbf1_{\{c\ge0\}}$.
If $F_\gamma(c)=\sum_{j=0}^3v_j(\gamma)F_{\chi_j^2}(c)$, then
\begin{equation}
q_\alpha'(\gamma)
=
-
\frac{
\sum_{j=0}^3v_j'(\gamma)F_{\chi_j^2}\{q_\alpha(\gamma)\}
}{
\sum_{j=1}^3v_j(\gamma)f_{\chi_j^2}\{q_\alpha(\gamma)\}
}.
\label{eq:qprime-gamma}
\end{equation}
At $\gamma=1$ and $\alpha=0.05$, this gives
$q_{0.05}'(1)=-0.339124730022$.  Table~\ref{tab:FD} compares this value with
central finite differences.  The benchmark is obtained by analytic
differentiation of the mixture weights and CDF, not by a smaller-step
difference.  At $h=10^{-5}$ the centered finite difference differs by
$5.34\times10^{-11}$.  The observed quadratic error decay validates the
operator-to-quantile differentiation pipeline against a family-specific
deterministic formula.

\begin{table}[t]
\caption{Finite-difference validation of $q_{0.05}'(1)$.}
\label{tab:FD}
\centering
\begin{tabular}{rrrr}
\toprule
$h$ & central difference & analytic derivative & absolute error\\
\midrule
$10^{-1}$ & $-0.339853065$ & $-0.339124730$ & $7.283\times10^{-4}$\\
$5\times10^{-2}$ & $-0.339306318$ & $-0.339124730$ & $1.816\times10^{-4}$\\
$10^{-2}$ & $-0.339131987$ & $-0.339124730$ & $7.257\times10^{-6}$\\
$2\times10^{-3}$ & $-0.339125020$ & $-0.339124730$ & $2.903\times10^{-7}$\\
$10^{-4}$ & $-0.339124731$ & $-0.339124730$ & $6.973\times10^{-10}$\\
$10^{-5}$ & $-0.339124730075$ & $-0.339124730022$ & $5.340\times10^{-11}$\\
\bottomrule
\end{tabular}
\end{table}

Finally set $\gamma_{12}=0.25$ and $\gamma_{13}=4$.  Since
$q_{0.05}(\gamma)$ is nonincreasing, $K_0$ is a least-favourable endpoint
of the family \eqref{eq:gamma-theta}.  We run gradient ascent in $\theta$
with an Armijo line search: the initial trial step is one, rejected steps are
halved, the Armijo constant is $10^{-4}$, and the gradient tolerance is
$10^{-7}$.  The derivative is
\begin{equation}
\frac{\mathrm d}{\mathrm d\theta}q_{0.05}\{\gamma(\theta)\}
=
q_{0.05}'\{\gamma(\theta)\}
(\gamma_{13}-\gamma_{12})\sin(2\theta).
\label{eq:theta-gradient}
\end{equation}
All starts reach the same endpoint to within $2.6\times10^{-8}$ in the
canonical angle.  Two runs satisfy the gradient tolerance directly; in the
other two the Armijo search reaches the floating-point step floor with
$|\nabla q|<1.7\times10^{-7}$.  Table~\ref{tab:ascent} reports the executed
runs without relabelling these latter stops as formal convergence.

\begin{table}[t]
\caption{Armijo ascent for the one-dimensional kernel family.}
\label{tab:ascent}
\centering
\small
\begin{tabular}{rrrrrrl}
\toprule
$\theta_0$ & initial $q_{0.05}$ & iterations & backtracks & final $|\theta|$
& final $q_{0.05}$ & termination\\
\midrule
0.2 & 5.765555 & 32 & 64  & $1.52\times10^{-8}$ & 5.877087 & gradient tolerance\\
0.6 & 5.356733 & 47 & 408 & $2.51\times10^{-8}$ & 5.877087 & step floor\\
1.0 & 5.111931 & 47 & 420 & $2.51\times10^{-8}$ & 5.877087 & step floor\\
1.3 & 5.028070 & 31 & 55  & $1.38\times10^{-8}$ & 5.877087 & gradient tolerance\\
\bottomrule
\end{tabular}
\end{table}

The numerical experiments address four distinct claims: the nuisance model checks finite-sample calibration of a genuine likelihood
ratio; the fixed and transition experiments validate the stratified limits;
the anisotropic lower experiment exercises the nonconvex projection; and the
analytic/finite-difference comparison plus Armijo trace check the explicit
top-stratum sensitivity calculation.
The executable R code and generated numerical files accompany the arXiv submission; full-run replication counts, random seeds, and the correspondence with the tables above are documented in Section~\ref{sec:supp-reproducibility} of the supplementary material below.

\section{Discussion}
\label{sec:discussion}

The active-block reduction separates the universal rank geometry from the
model-specific information calculation.  At a top-stratum point it recovers
a chi-bar-square law.  At a lower-rank point the null tangent is a
rank-constrained nonconvex cone, but the same reduction still gives the
correct Gaussian distance functional.  Corollary~\ref{cor:null-transition}
shows that a rank change at the $n^{-1/2}$ scale does not require another
asymptotic construction: it translates this canonical experiment by the
active first-order increment.

This transition formulation sharpens the calibration question.  A critical
value computed from a central lower-stratum law is not stable along the
interface, as the exact Gaussian experiment demonstrates.  Under isotropy,
however, compression to the kernel of the local drift proves that the
rank-$r$ top-stratum law dominates every admissible transition, while
interlacing orders the fixed strata.  When $q-r=1$, the null active set is
the entire boundary of the alternative cone, and a supporting-hyperplane
argument gives the same conclusion for arbitrary anisotropy.  Within the present
framework, the remaining uniformity gap is therefore confined to anisotropic
active blocks of dimension at least two.

The reduced covariance $\mathcal S_{P_s}$ is a normal-form descriptor rather
than a minimal invariant.  Its value is modularity: nuisance profiling and
model geometry determine $\mathcal S_{P_s}$, after which fixed-stratum,
transition, and sensitivity calculations are performed in the active space.
On the top stratum, the projection-value derivative avoids strong regularity
of the full primal--dual solution map.  Differentiating probabilities still
requires the stated uniform level-set and Gaussian-tail conditions; the
explicit active-dimension-two family provides an independent check of the
resulting quantile derivative.

Two extensions remain open.  First, generic anisotropic transition ordering
would require control of translated nonconvex projections beyond the
corank-one boundary case.  Second, lower-stratum sensitivity must accommodate
nonunique best rank-constrained approximations and is therefore inherently
set-valued.  The canonical transition family and the active covariance
operator provide a common framework for both questions.

\begin{acks}[Acknowledgments]
The author used ChatGPT
(OpenAI) to assist with editorial review, bibliographic checking, code review,
and independent computational cross-checks.  The author reviewed and verified
all AI-assisted suggestions and remains fully responsible for the mathematical
arguments, numerical results, references, originality, and integrity of the
manuscript.
\end{acks}

\clearpage
\setcounter{section}{0}
\setcounter{subsection}{0}
\setcounter{equation}{0}
\setcounter{theorem}{0}
\setcounter{table}{0}
\setcounter{figure}{0}
\renewcommand{\thesection}{S\arabic{section}}
\renewcommand{\thesubsection}{S\arabic{section}.\arabic{subsection}}
\renewcommand{\thetable}{S\arabic{table}}
\renewcommand{\thefigure}{S\arabic{figure}}
\makeatletter
\def\theHsection{supp.\arabic{section}}
\def\theHsubsection{supp.\arabic{section}.\arabic{subsection}}
\def\theHequation{supp.\arabic{section}.\arabic{equation}}
\def\theHtheorem{supp.\arabic{section}.\arabic{theorem}}
\def\theHtable{supp.\arabic{table}}
\def\theHfigure{supp.\arabic{figure}}
\makeatother

\begin{center}
{\LARGE\bfseries Supplement to ``Nonstandard likelihood-ratio limits under
semidefinite rank constraints''\par}
\vspace{1em}
{\large Didier Concordet\par}
\vspace{0.25em}
{\normalsize ORCID:
\href{https://orcid.org/0000-0003-3916-577X}{0000-0003-3916-577X}\par}
\vspace{1em}
\end{center}

\noindent\textbf{Supplementary abstract.}
This supplement contains expanded proofs and computational material for the
main article.  It gives the passage from uniform LAN to stratified Gaussian
suprema, the tangent cone and local set convergence for the PSD bounded-rank
null, the active-block quotient, and the null transition experiment.  It
also proves isotropic dominance for fixed strata and local rank transitions,
the anisotropic corank-one bound, and the conditional regular-sublevel lemma
supporting the positive-level Hadamard shape derivative.  Operator chain
rules and reproducible R calculations are recorded for the nuisance model,
rank transitions, anisotropic lower-stratum projections, the analytic
derivative check, and the Armijo orientation search.
\par\medskip

\noindent\textbf{Supplementary keywords:}
Constrained likelihood; low-rank semidefinite cone; probability
differentiation; supplementary proofs.

\section{From uniform LAN to stratified constrained suprema}
\label{sec:supp-LAN}

This section supplies the probabilistic part of the proof of
Theorem~\ref{thm:stratified-limit}, together with the nuisance-profile
calculation used throughout Section~\ref{sec:stratified}.  We work under the
framework of Section~\ref{sec:framework}, fix a true matrix of rank $s\le r$,
and retain the notation introduced there.

For a local direction $h=(a,H)\in\mathcal E$, $a\in\mathbb R^{d_\psi}$
is the nuisance direction and $H\in\Smat^q$ is the matrix direction.  Define the local log-likelihood
criterion
\begin{equation}
\mathbb L_n(h)=\ell_n(\theta_0+n^{-1/2}h)-\ell_n(\theta_0).
\label{eq:supp-local-criterion}
\end{equation}
Suppose that, under $\Pr_{\theta_0}$, this criterion satisfies
\begin{equation}
\mathbb L_n(h)
=
\langle h,Z_n\rangle
-
\frac12\langle h,\mathcal I h\rangle
+
\rho_n(h),
\label{eq:supp-LAN}
\end{equation}
where $\mathcal I$ is positive definite,
$Z_n\Rightarrow Z\sim\mathcal N_{\mathcal E}(0,\mathcal I)$, and
\begin{equation}
\sup_{h\in C}|\rho_n(h)|\xrightarrow{\Pr}0
\label{eq:supp-uniform-remainder}
\end{equation}
for every compact $C\subset\mathcal E$.

For $j\in\{0,1\}$, define the rescaled parameter sets
\begin{equation}
\mathcal H_{j,n}
=
\{h:\theta_0+n^{-1/2}h\in\Theta_j\},
\label{eq:supp-rescaled-sets}
\end{equation}
A closed set $\mathcal H_j$ is called their local limit when, for every
$R<\infty$,
\begin{equation}
d_{\mathrm H}(\mathcal H_{j,n}\cap\overline B_R,
\mathcal H_j\cap\overline B_R)\longrightarrow0,
\end{equation}
where $\overline B_R=\{h\in\mathcal E:\|h\|\le R\}$ is the closed ball in
$\mathcal E$.  To specify the application, let
\[
\mathcal M_r^+=\{\Sigma\in\Splus^q:\rank(\Sigma)\le r\},
\qquad K_s=\ker(\Sigma_0),\qquad k_s=q-s,
\]
let $P_s$ be the orthogonal projector onto $K_s$, choose an orthonormal
basis matrix $U_s\in\mathbb R^{q\times k_s}$ for $K_s$, and set
$\mathcal A_{P_s}(H)=U_s^\top HU_s$.  We use the abbreviations
\begin{equation}
T_{0,s}^{(r)}(P_s)=T_{\mathcal M_r^+}(\Sigma_0),
\qquad
T_1(P_s)=T_{\Splus^q}(\Sigma_0),
\label{eq:supp-tangent-abbreviations}
\end{equation}
whose explicit active-block forms are proved in
Section~\ref{sec:supp-tangent}.  The local sets are then
\begin{equation}
\mathcal H_0
=
\mathbb R^{d_\psi}\oplus T_{0,s}^{(r)}(P_s),
\qquad
\mathcal H_1
=
\mathbb R^{d_\psi}\oplus T_1(P_s).
\label{eq:supp-limit-sets}
\end{equation}

\subsection{A deterministic continuity lemma}

\begin{lemma}[Convergence of suprema on moving compact sets]
\label{lem:supp-moving-suprema}
Let $C_n,C$ be nonempty compact subsets of a finite-dimensional normed
space and suppose $d_{\mathrm H}(C_n,C)\to0$.  Let $f_n,f$ be continuous
and suppose $f_n\to f$ uniformly on one compact set containing $C$ and all
$C_n$ for large $n$.  Then
\begin{equation}
\sup_{x\in C_n}f_n(x)
\longrightarrow
\sup_{x\in C}f(x).
\end{equation}
\end{lemma}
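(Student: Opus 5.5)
The plan is to prove the two inequalities $\limsup_n \sup_{C_n} f_n \le \sup_C f$ and $\liminf_n \sup_{C_n} f_n \ge \sup_C f$ separately. Let $K$ be the common compact set, which contains $C$ and every $C_n$ for $n\ge n_0$. Put $\varepsilon_n=\sup_{x\in K}|f_n(x)-f(x)|\to0$ and $\delta_n=d_{\mathrm H}(C_n,C)\to0$. Since $f$ is continuous on the compact set $K$, it is uniformly continuous there. Let $\omega$ be its modulus of continuity on $K$, so that $|f(x)-f(y)|\le\omega(\|x-y\|)$ for $x,y\in K$ and $\omega(t)\to0$ as $t\downarrow0$.

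For the upper bound, take $n\ge n_0$ and any $x\in C_n$. Because $C$ is compact, there is $y\in C$ with $\|x-y\|=\dist(x,C)\le\delta_n$. Both $x$ and $y$ lie in $K$, so
\[
f_n(x)\le f(x)+\varepsilon_n\le f(y)+\omega(\delta_n)+\varepsilon_n\le \sup_C f+\omega(\delta_n)+\varepsilon_n .
\]
Taking the supremum over $x\in C_n$ gives $\sup_{C_n}f_n\le\sup_C f+\omega(\delta_n)+\varepsilon_n$.

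For the lower bound, compactness of $C$ and continuity of $f$ give a maximiser $x^*\in C$ of $f$. Because $C_n$ is compact, there is $x_n\in C_n$ with $\|x_n-x^*\|=\dist(x^*,C_n)\le\delta_n$. Then
\[
\sup_{C_n}f_n\ge f_n(x_n)\ge f(x_n)-\varepsilon_n\ge f(x^*)-\omega(\delta_n)-\varepsilon_n .
\]

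Combining the two bounds gives $|\sup_{C_n}f_n-\sup_Cf|\le\omega(\delta_n)+\varepsilon_n\to0$.

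There is no real obstacle. The only point needing care is that uniform continuity of $f$ has to be applied to pairs $x,y$ that both lie in $K$; this is exactly why the hypothesis requires a single compact set containing $C$ and all $C_n$ for large $n$. Continuity of the $f_n$ is not actually used beyond ensuring the suprema are finite; uniform convergence together with continuity of $f$ suffices.
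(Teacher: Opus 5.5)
Your proof is correct. It has the same two-inequality structure as the paper's, and your lower bound is the paper's argument: take a maximiser $x^*$ of $f$ on $C$ and approximate it by points of $C_n$.

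The upper bound is where you diverge. The paper takes near-maximisers $x_n\in C_n$ with $f_n(x_n)\ge\sup_{C_n}f_n-n^{-1}$. It then extracts convergent subsequences, using compactness of the common set and Hausdorff convergence to place the limit in $C$, and passes to the limit using uniform convergence and continuity of $f$. You instead send each $x\in C_n$ to a nearest point of $C$ and apply uniform continuity of $f$ on $K$ through its modulus $\omega$.

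Your route gives an explicit estimate, $|\sup_{C_n}f_n-\sup_Cf|\le\omega(\delta_n)+\varepsilon_n$ for $n\ge n_0$, and this makes uniformity transparent. The same bound holds simultaneously for any family of problems with common control of $\varepsilon_n$, $\delta_n$ and a shared modulus. That is the kind of uniformity the paper later asserts without detail, when it says the proof of Proposition~\ref{prop:supp-supremum-limit} is uniform over rank-separated compact sets. The paper's sequential argument is shorter and avoids introducing a modulus, but it gives no rate.

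Your closing remark can be sharpened slightly. Continuity of the $f_n$ is not needed even to make the suprema finite, since $\sup_{C_n}f_n\le\sup_Kf+\varepsilon_n<\infty$ for all large $n$.
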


\begin{proof}
Choose $x_n\in C_n$ satisfying
$f_n(x_n)\ge\sup_{C_n}f_n-n^{-1}$.  Every subsequence has a further
subsequence converging to a point $x\in C$, and uniform convergence gives
\begin{equation}
\limsup_n\sup_{C_n}f_n\le f(x)\le\sup_C f.
\end{equation}
Conversely, if $x^*$ maximises $f$ on $C$, Hausdorff convergence provides
$x_n^*\in C_n$ with $x_n^*\to x^*$.  Hence
\begin{equation}
\liminf_n\sup_{C_n}f_n
\ge
\lim_n f_n(x_n^*)
=
f(x^*)=
\sup_C f.
\end{equation}
\end{proof}

\subsection{Joint convergence of the constrained likelihood suprema}

For $R>0$, define
\begin{equation}
M_{j,n}(R)
=
\sup_{h\in\mathcal H_{j,n}\cap\overline B_R}
\{\ell_n(\theta_0+n^{-1/2}h)-\ell_n(\theta_0)\}.
\end{equation}
Let
\begin{equation}
M_j(R)
=
\sup_{h\in\mathcal H_j\cap\overline B_R}
\left\{
\langle h,Z\rangle-\frac12\langle h,\mathcal I h\rangle
\right\}.
\end{equation}

\begin{proposition}[Stratified Gaussian supremum limit]
\label{prop:supp-supremum-limit}
Assume \eqref{eq:supp-LAN}--\eqref{eq:supp-uniform-remainder}, local
Hausdorff convergence of $\mathcal H_{j,n}\cap\overline B_R$ to
$\mathcal H_j\cap\overline B_R$ for every $R$, and stochastic boundedness of the constrained local maximisers, meaning
that maximisers $\widehat h_{j,n}\in\mathcal H_{j,n}$ can be selected with
$\|\widehat h_{j,n}\|=O_{\Pr}(1)$ for $j=0,1$.  Then
\begin{equation}
\left(
\sup_{h\in\mathcal H_{0,n}}\mathbb L_n(h),
\sup_{h\in\mathcal H_{1,n}}\mathbb L_n(h)
\right)
\Rightarrow
\left(
\sup_{h\in\mathcal H_0}\mathbb G(h),
\sup_{h\in\mathcal H_1}\mathbb G(h)
\right),
\label{eq:supp-joint-suprema}
\end{equation}
where
\begin{equation}
\mathbb G(h)=\langle h,Z\rangle-\frac12\langle h,\mathcal I h\rangle.
\end{equation}
Consequently, twice the difference of the two likelihood suprema converges
to twice the difference of the Gaussian suprema.
\end{proposition}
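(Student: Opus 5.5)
The argument is a localisation on a large ball, followed by the continuous mapping theorem and Lemma~\ref{lem:supp-moving-suprema}. First, I would use Skorokhod's representation, or an almost-sure subsequence argument, applied to $Z_n\Rightarrow Z$. This realises $Z_n\to Z$ almost surely on a common probability space. In the same step, the remainder condition \eqref{eq:supp-uniform-remainder} can be strengthened to $\sup_{\overline B_R}|\rho_n|\to0$ almost surely along subsequences. This is legitimate because we only need convergence in distribution, and by the subsequence criterion it suffices to show every subsequence has a further subsequence along which the pair converges in law.

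\emph{Truncated problem.} Fix $R$. On $\overline B_R$ the criterion $\mathbb L_n$ converges uniformly to $\mathbb G$: the linear term $\langle h,Z_n\rangle$ converges uniformly to $\langle h,Z\rangle$, and $\rho_n\to0$ uniformly. The sets $\mathcal H_{j,n}\cap\overline B_R$ are compact, being closed because $\Theta_j$ is relatively closed near $\theta_0$. They converge in Hausdorff distance to $\mathcal H_j\cap\overline B_R$ by hypothesis, and all lie in the fixed compact set $\overline B_R$. Lemma~\ref{lem:supp-moving-suprema} therefore gives
\begin{equation}
(M_{0,n}(R),M_{1,n}(R))\to(M_0(R),M_1(R))
\end{equation}
jointly, almost surely along the subsequence. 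In particular the pair converges jointly in distribution, since the same random elements drive both coordinates.

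\emph{Removing the truncation.} This is the step where care is needed. For the likelihood side, fix $\varepsilon>0$ and use the stochastic boundedness of $\widehat h_{j,n}$ to choose $R$ with $\Pr(\|\widehat h_{j,n}\|\le R/2,\ j=0,1)\ge1-\varepsilon$ for all large $n$. On that event, $\sup_{\mathcal H_{j,n}}\mathbb L_n=M_{j,n}(R)$ for both $j$. For the Gaussian side, $\mathcal I$ is positive definite and $\mathcal H_j$ is a closed cone containing $0$. Hence $\mathbb G(h)\le\|h\|\|Z\|-\tfrac12\lambda_{\min}\|h\|^2$, so the Gaussian maximiser lies in the ball of radius $2\|Z\|/\lambda_{\min}$. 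Consequently $M_j(R)=\sup_{\mathcal H_j}\mathbb G$ as soon as $R$ exceeds this radius, which happens with probability at least $1-\varepsilon$ for $R$ large.

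\emph{Conclusion.} The preceding step shows the truncated and untruncated pairs differ with probability at most $2\varepsilon$ uniformly in large $n$. A standard approximation argument (Billingsley's Theorem 3.2) then yields \eqref{eq:supp-joint-suprema}. The final claim, that twice the difference of the likelihood suprema converges to twice the difference of the Gaussian suprema, follows by the continuous mapping theorem applied to $(x,y)\mapsto2(y-x)$.

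The main obstacle is making the Hausdorff step honest, since the hypothesis only concerns intersections with balls. Points of $\mathcal H_j\cap\overline B_R$ near the sphere may be approximated only by points of $\mathcal H_{j,n}$ just outside $\overline B_R$. I would handle this by applying Lemma~\ref{lem:supp-moving-suprema} with $R$ replaced by $R'>R$ and comparing the two radii. This works because, on the high-probability event, both maximisers lie strictly inside $\overline B_{R/2}$, so enlarging the radius does not change either supremum.
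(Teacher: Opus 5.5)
Your proposal is correct and follows essentially the paper's route. You localise via the stochastically bounded maximisers, obtain uniform convergence of the LAN criterion on $\overline B_R$ through Skorokhod, apply Lemma~\ref{lem:supp-moving-suprema} jointly to the truncated pair, and remove the truncation by coercivity of $\mathbb G$; your explicit Billingsley-type approximation step is a tidier version of the paper's ``let $R\to\infty$, then $\varepsilon\downarrow0$''.

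Two small tightenings. First, as the paper stresses, apply Skorokhod to the pair $(Z_n,\sup_{\overline B_R}|\rho_n|)$ rather than to $Z_n$ alone, so that both convergences hold on one space. Alternatively, note that $M_{j,n}(R)$ is within $\sup_{\overline B_R}|\rho_n|$ of a deterministic function of $Z_n$ and finish with Slutsky. Second, the ``obstacle'' near the sphere does not arise: the hypothesis already asserts Hausdorff convergence of the truncated sets $\mathcal H_{j,n}\cap\overline B_R$ themselves, so no comparison of radii is needed.
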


\begin{proof}
Fix $\varepsilon>0$.  Stochastic boundedness yields $R<\infty$ such that,
with probability at least $1-\varepsilon$, both constrained maximisers lie
in $\overline B_R$.  On this event the unrestricted and truncated suprema
coincide.  On $\overline B_R$, put
\begin{equation}
R_{n,R}=\sup_{\|h\|\le R}|\rho_n(h)|.
\end{equation}
Then $R_{n,R}\to0$ in probability, and Slutsky's theorem gives the joint
convergence
\begin{equation}
(Z_n,R_{n,R})\Rightarrow(Z,0).
\end{equation}
Apply the Skorohod representation theorem to this pair, not to $Z_n$ alone.
On the representation space, $Z_n\to Z$ and $R_{n,R}\to0$ almost surely.
Consequently the complete LAN criteria, including their random remainders,
converge uniformly on $\overline B_R$.  Lemma~\ref{lem:supp-moving-suprema},
applied jointly to the two local sets, gives
\begin{equation}
(M_{0,n}(R),M_{1,n}(R))
\longrightarrow
(M_0(R),M_1(R))
\end{equation}
almost surely on the representation space.  Letting $R\to\infty$ removes
the truncation because the positive-definite quadratic term makes
$\mathbb G$ coercive and gives a unique unconstrained maximiser.  Finally
let $\varepsilon\downarrow0$.
\end{proof}

\subsection{Profiling the regular nuisance parameter}\label{subsec:supp-nuisance-profile}

Relative to $\mathcal E=\mathbb R^{d_\psi}\oplus\Smat^q$, write the
score and information operator in blocks:
\begin{equation}
Z=
\begin{pmatrix}Z_\psi\\Z_\Sigma\end{pmatrix},
\qquad
\mathcal I=
\begin{pmatrix}
\mathcal I_{\psi\psi}&\mathcal I_{\psi\Sigma}\\
\mathcal I_{\Sigma\psi}&\mathcal I_{\Sigma\Sigma}
\end{pmatrix}.
\end{equation}
Thus $\mathcal I_{\psi\Sigma}:\Smat^q\to\mathbb R^{d_\psi}$,
$\mathcal I_{\Sigma\psi}=\mathcal I_{\psi\Sigma}^*$, and the diagonal
blocks act on their corresponding component spaces.  Define
\begin{equation}
Z_\Sigma^{\mathrm{eff}}
=
Z_\Sigma-
\mathcal I_{\Sigma\psi}
\mathcal I_{\psi\psi}^{-1}Z_\psi,
\qquad
\mathcal I_{\mathrm{eff}}
=
\mathcal I_{\Sigma\Sigma}
-
\mathcal I_{\Sigma\psi}
\mathcal I_{\psi\psi}^{-1}
\mathcal I_{\psi\Sigma}.
\label{eq:supp-efficient-objects}
\end{equation}
\begin{lemma}[Schur-complement profile]
	\label{lem:supp-schur-profile}
	Let
	\begin{align}
		\mathbb G(a,H)
		&=
		\langle a,Z_\psi\rangle
		+
		\langle H,Z_\Sigma\rangle
		\nonumber\\
		&\quad
		-
		\frac12
		\left\{
		\langle a,\mathcal I_{\psi\psi}a\rangle
		+
		2\langle a,\mathcal I_{\psi\Sigma}H\rangle
		+
		\langle H,\mathcal I_{\Sigma\Sigma}H\rangle
		\right\}.
		\label{eq:supp-gaussian-criterion-blocks}
	\end{align}
	Then the operator
	\[
	\mathcal I_{\mathrm{eff}}
	=
	\mathcal I_{\Sigma\Sigma}
	-
	\mathcal I_{\Sigma\psi}
	\mathcal I_{\psi\psi}^{-1}
	\mathcal I_{\psi\Sigma}
	\]
	is positive definite, and
	\[
	Z_\Sigma^{\mathrm{eff}}
	=
	Z_\Sigma
	-
	\mathcal I_{\Sigma\psi}
	\mathcal I_{\psi\psi}^{-1}Z_\psi
	\sim
	\mathcal N(0,\mathcal I_{\mathrm{eff}}).
	\]
	For every fixed \(H\in\Smat^q\),
	\begin{equation}
		\sup_{a\in\mathbb R^{d_\psi}}\mathbb G(a,H)
		=
		C(Z_\psi)
		+
		\langle H,Z_\Sigma^{\mathrm{eff}}\rangle
		-
		\frac12
		\langle H,\mathcal I_{\mathrm{eff}}H\rangle,
		\label{eq:supp-profiled-criterion}
	\end{equation}
	where
	\begin{equation}
		C(Z_\psi)
		=
		\frac12
		\left\langle
		Z_\psi,
		\mathcal I_{\psi\psi}^{-1}Z_\psi
		\right\rangle
	\end{equation}
	is independent of \(H\).
	If
	\[
	G=\mathcal I_{\mathrm{eff}}^{-1}Z_\Sigma^{\mathrm{eff}},
	\]
	then the last two terms in
	\eqref{eq:supp-profiled-criterion} satisfy
	\begin{equation}
		\langle H,Z_\Sigma^{\mathrm{eff}}\rangle
		-
		\frac12
		\langle H,\mathcal I_{\mathrm{eff}}H\rangle
		=
		\frac12
		\langle G,\mathcal I_{\mathrm{eff}}G\rangle
		-
		\frac12
		\|H-G\|_{\mathcal I_{\mathrm{eff}}}^2.
	\end{equation}
\end{lemma}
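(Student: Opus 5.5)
The plan is to treat this as a finite-dimensional block Gaussian elimination: complete the square in $a$, then complete the square in $H$.

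\emph{Positive definiteness.} Because $\mathcal I$ is self-adjoint and positive definite on $\mathcal E$, the diagonal block $\mathcal I_{\psi\psi}$ is positive definite and hence invertible. For $H\ne0$, set $a=-\mathcal I_{\psi\psi}^{-1}\mathcal I_{\psi\Sigma}H$. Expanding the quadratic form gives
\[
0<\langle (a,H),\mathcal I(a,H)\rangle=\langle H,\mathcal I_{\mathrm{eff}}H\rangle .
\]
So the Schur complement is positive definite. It is self-adjoint because $\mathcal I_{\Sigma\psi}=\mathcal I_{\psi\Sigma}^*$.

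\emph{Law of the efficient score.} The vector $Z_\Sigma^{\mathrm{eff}}$ is the image of $Z$ under the linear map $(z_\psi,z_\Sigma)\mapsto z_\Sigma-\mathcal I_{\Sigma\psi}\mathcal I_{\psi\psi}^{-1}z_\psi$, so it is centred Gaussian. I would compute its covariance from the block covariances: $\operatorname{Cov}(Z_\Sigma)=\mathcal I_{\Sigma\Sigma}$, $\operatorname{Cov}(Z_\Sigma,Z_\psi)=\mathcal I_{\Sigma\psi}$, and $\operatorname{Cov}(Z_\psi)=\mathcal I_{\psi\psi}$. The result is
\[
\mathcal I_{\Sigma\Sigma}-2\,\mathcal I_{\Sigma\psi}\mathcal I_{\psi\psi}^{-1}\mathcal I_{\psi\Sigma}+\mathcal I_{\Sigma\psi}\mathcal I_{\psi\psi}^{-1}\mathcal I_{\psi\psi}\mathcal I_{\psi\psi}^{-1}\mathcal I_{\psi\Sigma}=\mathcal I_{\mathrm{eff}} .
\]

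\emph{Profiling out $a$.} For fixed $H$, the map $a\mapsto\mathbb G(a,H)$ is a strictly concave quadratic, since its Hessian is $-\mathcal I_{\psi\psi}$. Its unique maximiser solves $Z_\psi-\mathcal I_{\psi\psi}a-\mathcal I_{\psi\Sigma}H=0$, i.e.
\[
a^*=\mathcal I_{\psi\psi}^{-1}(Z_\psi-\mathcal I_{\psi\Sigma}H).
\]
At the maximiser, the $a$-dependent part of $\mathbb G$ equals $\tfrac12\langle a^*,\mathcal I_{\psi\psi}a^*\rangle=\tfrac12\langle w,\mathcal I_{\psi\psi}^{-1}w\rangle$, where $w=Z_\psi-\mathcal I_{\psi\Sigma}H$. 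Expanding this gives three pieces:
\begin{itemize}
\item the constant $C(Z_\psi)$;
\item the cross term $-\langle H,\mathcal I_{\Sigma\psi}\mathcal I_{\psi\psi}^{-1}Z_\psi\rangle$, which combines with $\langle H,Z_\Sigma\rangle$ to give $\langle H,Z_\Sigma^{\mathrm{eff}}\rangle$;
\item the quadratic $\tfrac12\langle H,\mathcal I_{\Sigma\psi}\mathcal I_{\psi\psi}^{-1}\mathcal I_{\psi\Sigma}H\rangle$, which combines with $-\tfrac12\langle H,\mathcal I_{\Sigma\Sigma}H\rangle$ to give $-\tfrac12\langle H,\mathcal I_{\mathrm{eff}}H\rangle$.
\end{itemize}
Together these give the profiled criterion \eqref{eq:supp-profiled-criterion}.

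\emph{Completing the square in $H$.} Substitute $Z_\Sigma^{\mathrm{eff}}=\mathcal I_{\mathrm{eff}}G$ and expand $\tfrac12\|H-G\|_{\mathcal I_{\mathrm{eff}}}^2$. By self-adjointness of $\mathcal I_{\mathrm{eff}}$ this yields
\[
\tfrac12\langle H,\mathcal I_{\mathrm{eff}}H\rangle-\langle H,\mathcal I_{\mathrm{eff}}G\rangle+\tfrac12\langle G,\mathcal I_{\mathrm{eff}}G\rangle ,
\]
and rearranging gives the stated identity.

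No step is a genuine obstacle. The only care needed is the adjoint bookkeeping in the cross term: the factor $2$ in $\mathbb G$ must combine with $\mathcal I_{\Sigma\psi}=\mathcal I_{\psi\Sigma}^*$ so that no stray factor of $2$ survives.
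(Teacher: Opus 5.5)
Your proposal is correct and follows the paper's proof essentially step for step: you maximise the strictly concave quadratic in $a$ at $a^*=\mathcal I_{\psi\psi}^{-1}(Z_\psi-\mathcal I_{\psi\Sigma}H)$, expand using $\mathcal I_{\Sigma\psi}=\mathcal I_{\psi\Sigma}^*$, and complete the square in $H$. The only difference is that you prove positive definiteness directly, via the test vector $a=-\mathcal I_{\psi\psi}^{-1}\mathcal I_{\psi\Sigma}H$, and write out the covariance computation, whereas the paper simply cites the Schur-complement theorem and a ``direct block calculation''.
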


\begin{proof}
	For fixed \(H\), the dependence of
	\(\mathbb G(a,H)\) on \(a\) is
	\begin{equation}
		\mathbb G(a,H)
		=
		\langle a,Z_\psi-\mathcal I_{\psi\Sigma}H\rangle
		-
		\frac12
		\langle a,\mathcal I_{\psi\psi}a\rangle
		+
		R(H),
	\end{equation}
	where
	\[
	R(H)
	=
	\langle H,Z_\Sigma\rangle
	-
	\frac12
	\langle H,\mathcal I_{\Sigma\Sigma}H\rangle
	\]
	does not depend on \(a\).
	Since \(\mathcal I_{\psi\psi}\) is positive definite, this is a
	strictly concave quadratic function of \(a\). Its unique maximiser is
	\begin{equation}
		a^*(H)
		=
		\mathcal I_{\psi\psi}^{-1}
		\bigl(
		Z_\psi-\mathcal I_{\psi\Sigma}H
		\bigr).
	\end{equation}
	Substitution gives
	\begin{align}
		\sup_a\mathbb G(a,H)
		&=
		\frac12
		\left\langle
		Z_\psi-\mathcal I_{\psi\Sigma}H,
		\mathcal I_{\psi\psi}^{-1}
		\bigl(
		Z_\psi-\mathcal I_{\psi\Sigma}H
		\bigr)
		\right\rangle
		\nonumber\\
		&\quad
		+
		\langle H,Z_\Sigma\rangle
		-
		\frac12
		\langle H,\mathcal I_{\Sigma\Sigma}H\rangle.
	\end{align}
	Expanding the quadratic term and using
	\(\mathcal I_{\Sigma\psi}=\mathcal I_{\psi\Sigma}^*\) yields
	\eqref{eq:supp-profiled-criterion}.
	
	The covariance identity
	\[
	\operatorname{Cov}
	\bigl(
	Z_\Sigma^{\mathrm{eff}}
	\bigr)
	=
	\mathcal I_{\mathrm{eff}}
	\]
	follows by direct block calculation. Positive definiteness of
	\(\mathcal I_{\mathrm{eff}}\) follows from the Schur-complement theorem,
	because \(\mathcal I\) is positive definite. The final identity follows by
	completing the square.
\end{proof}

\section{Tangent geometry of the bounded-rank PSD null}
\label{sec:supp-tangent}

Theorem~\ref{thm:supp-null-tangent} below is the expanded proof of Theorem~\ref{thm:null-tangent}; Proposition~\ref{prop:supp-local-Hausdorff} verifies the local-set part of Assumption~\ref{ass:local} for the direct semidefinite rank constraint.

Let
\begin{equation}
\mathcal M_r^+
=
\{\Sigma\in\Splus^q:\rank(\Sigma)\le r\}.
\end{equation}
Suppose $\Sigma_0\in\mathcal M_r^+$ has rank $s\le r$.  Let
$K_s=\ker(\Sigma_0)$, $k_s=q-s$, and let $P_s$ be the orthogonal projector
onto $K_s$.  Choose $U_s\in\mathbb R^{q\times k_s}$ with orthonormal
columns spanning $K_s$, so that $P_s=U_sU_s^\top$, and define
\begin{equation}
\mathcal A_{P_s}(H)=U_s^\top H U_s.
\end{equation}
For $m\le k$, set
\begin{equation}
\mathcal K_m^k
=
\{B\in\Splus^k:\rank(B)\le m\}.
\end{equation}

\subsection{Alternative tangent cone}

In an orthogonal basis adapted to
$\operatorname{range}(\Sigma_0)\oplus K_s$,
\begin{equation}
\Sigma_0=
\begin{pmatrix}\Lambda&0\\0&0\end{pmatrix},
\qquad \Lambda\in\Smat_{++}^s\ \text{(positive definite)},
\qquad
H=
\begin{pmatrix}A&B\\B^\top&C\end{pmatrix}.
\label{eq:supp-blocks}
\end{equation}
The classical PSD tangent formula is
\begin{equation}
T_{\Splus^q}(\Sigma_0)
=
\{H:C\succeq0\}
=
\{H:\mathcal A_{P_s}(H)\succeq0\}.
\label{eq:supp-PSD-tangent}
\end{equation}
Necessity follows from positivity of kernel quadratic forms.  For
sufficiency, one may use the factor path constructed below with no rank
restriction on $C$.

\subsection{Bounded-rank null tangent cone}\label{subsec:supp-null-tangent}

\begin{theorem}[Tangent cone to $\mathcal M_r^+$]
\label{thm:supp-null-tangent}
At a rank-$s$ point $\Sigma_0\in\mathcal M_r^+$,
\begin{equation}
T_{\mathcal M_r^+}(\Sigma_0)
=
\left\{
H:\mathcal A_{P_s}(H)\succeq0,
\ \rank\{\mathcal A_{P_s}(H)\}\le r-s
\right\}.
\label{eq:supp-null-tangent}
\end{equation}
\end{theorem}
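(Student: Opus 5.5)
The plan is to work in the orthogonal basis adapted to $\operatorname{range}(\Sigma_0)\oplus K_s$ from \eqref{eq:supp-blocks}. In that basis $\Sigma_0=\diag(\Lambda,0)$ with $\Lambda\succ0$, and a direction $H$ has blocks $A,B,C$. Since $C=\mathcal A_{P_s}(H)$ up to the choice of orthonormal basis $U_s$, and conjugation by $O\in\operatorname O(k_s)$ preserves both positive semidefiniteness and rank, it suffices to prove that $H\in T_{\mathcal M_r^+}(\Sigma_0)$ if and only if $C\succeq0$ and $\rank(C)\le r-s$. I will prove the two inclusions separately, following the route already used in Theorem~\ref{thm:null-tangent}.

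\emph{Necessity.} Take $t_n\downarrow0$ and $H_n\to H$ with $\Sigma_n=\Sigma_0+t_nH_n\in\mathcal M_r^+$.
\begin{itemize}
\item For large $n$ the block $\Lambda+t_nA_n$ is positive definite, since $\Lambda\succ0$ and positive definiteness is an open condition.
\item The Guttman rank additivity formula gives $\rank(\Sigma_n)=s+\rank(R_n)$, where $R_n$ is the Schur complement \eqref{eq:Schur-tangent}.
\item Because $\Sigma_n\succeq0$ and its upper-left block is positive definite, the Schur complement $R_n$ is positive semidefinite.
\item Hence $R_n/t_n=C_n-t_nB_n^\top(\Lambda+t_nA_n)^{-1}B_n$ lies in $\mathcal K_{r-s}^{k_s}$. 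The correction term is $O(t_n)$ because $(\Lambda+t_nA_n)^{-1}$ stays bounded.
\item So $R_n/t_n\to C$. Since $\mathcal K_{r-s}^{k_s}$ is closed (rank is lower semicontinuous and $\Splus^{k_s}$ is closed), $C\in\mathcal K_{r-s}^{k_s}$.
\end{itemize}

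\emph{Sufficiency.} Given $C=WW^\top$ with $W\in\mathbb R^{k_s\times m}$ and $m\le r-s$, I use the factor path \eqref{eq:factor-path}. Two ingredients are needed.
\begin{itemize}
\item First, a solution $E$ of the Lyapunov-type equation $\Lambda^{1/2}E^\top+E\Lambda^{1/2}=A$. I will take $E$ symmetric solving the Sylvester equation $\Lambda^{1/2}E+E\Lambda^{1/2}=A$. This equation is uniquely solvable because $\Lambda^{1/2}$ and $-\Lambda^{1/2}$ have disjoint spectra. Explicitly, in the eigenbasis of $\Lambda$, $E_{ij}=A_{ij}/(\lambda_i^{1/2}+\lambda_j^{1/2})$, so $E$ is symmetric.
\item Second, $F=B^\top\Lambda^{-1/2}$.
\end{itemize}
Then $\Sigma(t)=X(t)X(t)^\top$ is automatically PSD, and since $X(t)$ has $s+m$ columns, $\rank\Sigma(t)\le s+m\le r$. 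Multiplying out the blocks:
\begin{itemize}
\item upper-left: $\Lambda+t(\Lambda^{1/2}E^\top+E\Lambda^{1/2})+t^2EE^\top=\Lambda+tA+O(t^2)$;
\item off-diagonal: $t\Lambda^{1/2}F^\top+O(t^2)=tB+O(t^2)$;
\item lower-right: $t^2FF^\top+tWW^\top=tC+O(t^2)$.
\end{itemize}
Therefore $(\Sigma(t)-\Sigma_0)/t\to H$, which places $H$ in the Bouligand cone \eqref{eq:main-Bouligand-definition}.

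The main point requiring care is the necessity step. One must justify that the rank bound passes to the limit through the rescaled Schur complement rather than through $C_n$ directly, because $C_n$ itself need not have rank at most $r-s$. That justification relies on the Guttman rank identity, on uniform invertibility of $\Lambda+t_nA_n$, and on closedness of the bounded-rank PSD cone; all three are routine. Everything else is an explicit block computation.
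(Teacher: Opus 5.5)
Your proposal is correct and follows the paper's proof essentially step by step: the Schur complement with rank additivity and closedness of $\mathcal K_{r-s}^{k_s}$ for necessity, and the factor path $X(t)$ with a Sylvester solution $E$ and $F=B^\top\Lambda^{-1/2}$ for sufficiency. Your explicit symmetric solution $E_{ij}=A_{ij}/(\lambda_i^{1/2}+\lambda_j^{1/2})$ simply makes concrete the existence of $E$, which the paper asserts without a formula.
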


\begin{proof}
Use the block representation \eqref{eq:supp-blocks}.  For necessity, let
$t_n\downarrow0$ and $H_n\to H$ be such that
$\Sigma_n=\Sigma_0+t_nH_n\in\mathcal M_r^+$.  For large $n$,
$\Lambda+t_nA_n$ is positive definite.  Its Schur complement is
\begin{equation}
R_n
=
t_nC_n-t_n^2B_n^\top
(\Lambda+t_nA_n)^{-1}B_n.
\end{equation}
Since $\Sigma_n\succeq0$, $R_n\succeq0$, and because the upper-left block
has rank $s$,
\begin{equation}
\rank(R_n)=\rank(\Sigma_n)-s\le r-s.
\end{equation}
After division by $t_n$,
\begin{equation}
C_n-t_nB_n^\top(\Lambda+t_nA_n)^{-1}B_n\to C.
\end{equation}
The set $\mathcal K_{r-s}^{k_s}$ is closed, so
$C\succeq0$ and $\rank(C)\le r-s$.

For sufficiency, suppose $C\succeq0$ and $\rank(C)=m\le r-s$.  Factor
$C=WW^\top$ with $W\in\mathbb R^{k_s\times m}$.  The Sylvester equation
\begin{equation}
\Lambda^{1/2}E^\top+E\Lambda^{1/2}=A
\end{equation}
has a solution because $\Lambda^{1/2}$ is positive definite.  Put
$F=B^\top\Lambda^{-1/2}$ and define
\begin{equation}
X(t)=
\begin{bmatrix}
\Lambda^{1/2}+tE&0\\
tF&\sqrt t\,W
\end{bmatrix}.
\label{eq:supp-factor-path}
\end{equation}
Then $\Sigma(t)=X(t)X(t)^\top\succeq0$ and
$\rank\{\Sigma(t)\}\le s+m\le r$.  Expansion gives
\begin{equation}
\Sigma(t)=\Sigma_0+tH+O(t^2),
\end{equation}
so $H$ is a Bouligand tangent direction.
\end{proof}

\begin{remark}[Top and lower strata]
When $s=r$, the active block must have rank zero and
$T_{\mathcal M_r^+}(\Sigma_0)=\{H:\mathcal A_{P_r}(H)=0\}$, the tangent
space of the rank-$r$ stratum.  When $s<r$, the active block ranges over the
nonconvex cone $\mathcal K_{r-s}^{q-s}$.
\end{remark}

\subsection{Local Hausdorff convergence}\label{subsec:supp-local-Hausdorff}

The pointwise tangent construction above can be made uniform without choosing
continuous matrix factors.

\begin{proposition}[Local Hausdorff convergence]
\label{prop:supp-local-Hausdorff}
For every $R<\infty$,
\begin{equation}
d_{\mathrm H}
\left[
\left\{t^{-1}(\Sigma-\Sigma_0):\Sigma\in\mathcal M_r^+\right\}
\cap\overline B_R,
T_{\mathcal M_r^+}(\Sigma_0)\cap\overline B_R
\right]
\longrightarrow0
\label{eq:supp-local-Hausdorff}
\end{equation}
as $t\downarrow0$.
\end{proposition}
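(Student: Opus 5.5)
Write $\mathcal T=T_{\mathcal M_r^+}(\Sigma_0)$ and $\mathcal H_t=\{t^{-1}(\Sigma-\Sigma_0):\Sigma\in\mathcal M_r^+\}$. The Hausdorff distance splits into two one-sided excesses, and I treat them separately in the block coordinates \eqref{eq:supp-blocks}. Throughout, $R$ is fixed and $t\le t_0(R)$ is small enough that $\Lambda+tA\succeq\Lambda/2$ for every $\|H\|_F\le R$.

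\emph{Excess of $\mathcal H_t\cap\overline B_R$ over $\mathcal T$.} Take $H=(A,B;C)\in\mathcal H_t\cap\overline B_R$, so $\Sigma_0+tH\in\mathcal M_r^+$. As in the necessity part of Theorem~\ref{thm:supp-null-tangent}, the Schur complement
\[
C-tB^\top(\Lambda+tA)^{-1}B=:C'
\]
is positive semidefinite of rank at most $r-s$. Moreover $\|C-C'\|_F\le 2tR^2\|\Lambda^{-1}\|_{\mathrm{op}}$. The matrix $H'=(A,B;C')$ lies in $\mathcal T$ and differs from $H$ only in the active block. It may have norm slightly above $R$. To fix this, note that $\mathcal T$ is a cone, so the rescaling $H'\cdot\min(1,R/\|H'\|_F)$ stays in $\mathcal T\cap\overline B_R$ and moves by at most $O(t)$. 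Hence this excess is $O(tR^2)$ uniformly.

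\emph{Excess of $\mathcal T\cap\overline B_R$ over $\mathcal H_t$.} This direction must be uniform and must avoid continuous selections of factors. Given $H\in\mathcal T\cap\overline B_R$, I would not use a square-root factor $W$ of $C$. Instead I take the exact Schur-complement reconstruction
\[
\Sigma(t)=\begin{pmatrix}\Lambda+tA & tB\\ tB^\top & tC+t^2B^\top(\Lambda+tA)^{-1}B\end{pmatrix}.
\]
Since $\Lambda+tA\succ0$, $\Sigma(t)$ is positive semidefinite, and its Schur complement is $tC\succeq0$ of rank at most $r-s$. Therefore $\rank\Sigma(t)=s+\rank C\le r$, so $\Sigma(t)\in\mathcal M_r^+$. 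Its rescaled increment is $H+tB^\top(\Lambda+tA)^{-1}B$ in the active block, within $2tR^2\|\Lambda^{-1}\|_{\mathrm{op}}$ of $H$.

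The only subtlety in this direction is that the nearby point must lie in $\mathcal H_t\cap\overline B_R$, not merely in $\mathcal H_t$. The perturbation can raise the norm above $R$, and $\mathcal H_t$ is not a cone. I would handle this by applying the construction to $\lambda H$ with $\lambda=R/(R+\delta_t)$, where $\delta_t=2tR^2\|\Lambda^{-1}\|_{\mathrm{op}}$. The correction term is positive semidefinite and is added only in the active block, so its size scales with $\lambda^2$ and the resulting point has norm at most $\lambda R+\lambda^2\delta_t\le R$. The total displacement from $H$ is $(1-\lambda)R+\delta_t=O(t)$. This slack at the ball boundary is the step I expect to require the most care; the rest is the bound on the Schur correction.

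Combining the two excesses, $d_{\mathrm H}=O(tR^2\|\Lambda^{-1}\|_{\mathrm{op}})\to0$, which proves \eqref{eq:supp-local-Hausdorff}. When $s=0$ there are no off-kernel blocks, $\mathcal H_t=\mathcal T$ exactly, and the statement is trivial.
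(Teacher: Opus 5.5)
Your proof is correct, and it differs from the paper's route in both halves.

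\textbf{Outer excess.} For the excess of $\mathcal H_t\cap\overline B_R$ over $\mathcal T\cap\overline B_R$, the paper argues by contradiction. A sequence at distance bounded below from the tangent cone has a convergent subsequence, and its limit lies in $\mathcal T$ by the necessity part of Theorem~\ref{thm:supp-null-tangent}. This is soft and gives no rate.

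\textbf{Inner excess.} For the reverse excess, the paper reuses the factor path \eqref{eq:supp-factor-path}. That path needs a factor $W$ of $C$ and a Sylvester solution $E$. Because $W$ need not depend continuously on $C$, the paper controls it through $\|W\|_F^2=\tr(C)$. It creates slack at the ball boundary by a fixed contraction $(1-\eta)H$, with $\eta$ tied to $\varepsilon$.

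\textbf{Your route.} You use one exact identity in both directions. For $t\le t_0(R)$ and $\|H\|_F\le R$, $\Sigma_0+tH\in\mathcal M_r^+$ if and only if the corrected active block $C-tB^\top(\Lambda+tA)^{-1}B$ lies in $\mathcal K_{r-s}^{k_s}$. So, locally, $\mathcal H_t$ is the image of $\mathcal T$ under a near-identity shift acting only on the active block. This removes the factorisation and the Sylvester equation altogether, so the continuity issue for $W$ never arises. Combined with your $t$-dependent contraction $\lambda=R/(R+\delta_t)$, it gives the explicit rate $d_{\mathrm H}=O(tR^2\|\Lambda^{-1}\|_{\mathrm{op}})$, which is stronger than the qualitative convergence stated.

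\textbf{What the paper's route buys.} It is organised as a direct reuse of the two halves of the tangent-cone proof. Its outer half uses only compactness and the outer-limit definition of the tangent cone. It would therefore carry over to settings where no explicit Schur-complement description of the rescaled set is available.
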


\begin{proof}
For the outer inclusion, suppose by contradiction that there are
$t_n\downarrow0$ and
$H_n\in t_n^{-1}(\mathcal M_r^+-\Sigma_0)\cap\overline B_R$ whose
distance from the tangent cone is bounded below.  Compactness yields a
subsequence $H_n\to H$.  The necessity part of
Theorem~\ref{thm:supp-null-tangent} gives
$H\in T_{\mathcal M_r^+}(\Sigma_0)$, a contradiction.

For the inner inclusion, first fix $\varepsilon>0$ and contract the
direction.  For $H\in T_{\mathcal M_r^+}(\Sigma_0)\cap\overline B_R$, let
\begin{equation}
H_\varepsilon=(1-\eta)H,
\qquad
\eta=\frac{\varepsilon}{2(R+1)}.
\end{equation}
Then $\|H-H_\varepsilon\|_F\le\varepsilon/2$ and
$\|H_\varepsilon\|_F\le R-\eta R$ whenever $R>0$.  Use the block notation
\eqref{eq:supp-blocks} for $H_\varepsilon$.  Choose any factor
$C=WW^\top$.  Although $W$ need not vary continuously with $C$, there is
a finite constant $C_R$, depending only on $R$ and $\Sigma_0$, such that
$\|W\|_F^2=\tr(C)\le C_R$.  The solution $E$ of the Sylvester equation and
$F=B^\top\Lambda^{-1/2}$ satisfy bounds of the same form.  The symbol $C_R$
below denotes a finite constant with this dependence and may increase from
line to line.  The factor path has the exact expansion
\begin{equation}
X(t)X(t)^\top
=
\Sigma_0+tH_\varepsilon+t^2
\begin{pmatrix}
EE^\top&EF^\top\\
FE^\top&FF^\top
\end{pmatrix}.
\label{eq:supp-uniform-factor-expansion}
\end{equation}
Let $\Sigma_\varepsilon(t)=X(t)X(t)^\top$ for the factor built from
$H_\varepsilon$, and define the scaled feasible direction
\begin{equation}
\widetilde H_{t,\varepsilon}
=t^{-1}\{\Sigma_\varepsilon(t)-\Sigma_0\}.
\end{equation}
Then $\Sigma_0+t\widetilde H_{t,\varepsilon}\in\mathcal M_r^+$ and
\begin{equation}
\|\widetilde H_{t,\varepsilon}-H_\varepsilon\|_F\le C_Rt.
\end{equation}
For fixed $\varepsilon$, take $t$ sufficiently small that
$C_Rt\le\min(\varepsilon/2,\eta R)$; then
$\widetilde H_{t,\varepsilon}\in\overline B_R$ and
$\|\widetilde H_{t,\varepsilon}-H\|_F\le\varepsilon$.  The case $R=0$ is
trivial.  Since the choice is uniform over
$T_{\mathcal M_r^+}(\Sigma_0)\cap\overline B_R$, the inner excess tends to
zero.  Combining the two excess bounds proves
\eqref{eq:supp-local-Hausdorff}.
\end{proof}

The sets involved are semi-algebraic, so this direct result is also
consistent with the general Chernoff regularity theorem of
\citet{Drton2009}.  Smooth model embeddings require the corresponding
local coordinate map to transfer the uniform approximation; this is why the
main theorem retains local convergence as an explicit assumption.

\section{Active-block quotient, stratified limit, and local alternatives}
\label{sec:supp-active}

Together with Sections~\ref{sec:supp-LAN} and~\ref{sec:supp-tangent}, this section completes the proof of Theorem~\ref{thm:stratified-limit}.  Subsection~\ref{subsec:supp-local-transitions} proves Corollaries~\ref{cor:local-alternative} and~\ref{cor:null-transition}.

Recall the active compression $\mathcal A_{P_s}:\Smat^q\to\Smat^{k_s}$
from Section~\ref{sec:supp-tangent}.  Let $\mathcal I_{\mathrm{eff}}$ be a
self-adjoint positive-definite operator on $\Smat^q$ and let
$G\sim\mathcal N_{\Smat^q}(0,\mathcal I_{\mathrm{eff}}^{-1})$.  We use the
Fisher norm
\begin{equation}
\|H\|_{\mathcal I_{\mathrm{eff}}}^2
=\langle H,\mathcal I_{\mathrm{eff}}H\rangle_F.
\end{equation}
The adjoint of $\mathcal A_{P_s}$ with respect to the Frobenius inner
products is
\begin{equation}
\mathcal A_{P_s}^*(B)=U_sBU_s^\top.
\label{eq:supp-Astar}
\end{equation}
Define
\begin{equation}
\mathcal S_{P_s}
=
\mathcal A_{P_s}\mathcal I_{\mathrm{eff}}^{-1}
\mathcal A_{P_s}^*.
\label{eq:supp-SP}
\end{equation}

\subsection{Affine profiling}

\begin{lemma}[Active affine profile]
\label{lem:supp-affine-profile}
The operator $\mathcal S_{P_s}$ is self-adjoint and positive definite.  If
$B_s=\mathcal A_{P_s}(G)$, then, for every $B\in\Smat^{k_s}$,
\begin{equation}
\inf_{\mathcal A_{P_s}(H)=B}
\|H-G\|_{\mathcal I_{\mathrm{eff}}}^2
=
\left\langle
B-B_s,\mathcal S_{P_s}^{-1}(B-B_s)
\right\rangle.
\label{eq:supp-affine-value}
\end{equation}
The unique minimiser is
\begin{equation}
H_B
=
G+
\mathcal I_{\mathrm{eff}}^{-1}\mathcal A_{P_s}^*
\mathcal S_{P_s}^{-1}(B-B_s).
\label{eq:supp-HB}
\end{equation}
Moreover,
$B_s\sim\mathcal N_{\Smat^{k_s}}(0,\mathcal S_{P_s})$.
\end{lemma}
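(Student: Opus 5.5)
The plan is to treat the statement as a weighted least-squares problem with a linear equality constraint, solved through a Lagrange (normal-equation) characterisation.

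\emph{Step 1: $\mathcal S_{P_s}$ is positive definite and self-adjoint.} Self-adjointness is immediate from the form $\mathcal A\mathcal I_{\mathrm{eff}}^{-1}\mathcal A^*$ with $\mathcal I_{\mathrm{eff}}^{-1}$ self-adjoint. For positivity, observe that for $B\in\Smat^{k_s}$,
\[
\langle B,\mathcal S_{P_s}B\rangle=\langle\mathcal A_{P_s}^*B,\mathcal I_{\mathrm{eff}}^{-1}\mathcal A_{P_s}^*B\rangle .
\]
This is positive unless $\mathcal A_{P_s}^*B=U_sBU_s^\top=0$. Since $U_s^\top U_s=I_{k_s}$, compressing gives $U_s^\top(U_sBU_s^\top)U_s=B$, so $\mathcal A_{P_s}\mathcal A_{P_s}^*=\Id$. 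Hence $\mathcal A_{P_s}^*$ is injective and $\mathcal A_{P_s}$ is surjective, which gives positivity.

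\emph{Step 2: the minimiser and the value.} For fixed $B$, the feasible set $\{H:\mathcal A_{P_s}(H)=B\}$ is a nonempty affine subspace, by surjectivity. The objective is strictly convex, so the minimiser is unique. It is characterised by two conditions:
\begin{itemize}
\item feasibility, $\mathcal A_{P_s}(H)=B$;
\item orthogonality in the $\mathcal I_{\mathrm{eff}}$ metric, $\langle H-G,\mathcal I_{\mathrm{eff}}K\rangle=0$ for all $K\in\ker\mathcal A_{P_s}$.
\end{itemize}
Equivalently, $\mathcal I_{\mathrm{eff}}(H-G)\in(\ker\mathcal A_{P_s})^\perp=\operatorname{range}\mathcal A_{P_s}^*$.

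So I write $H-G=\mathcal I_{\mathrm{eff}}^{-1}\mathcal A_{P_s}^*\lambda$ and impose feasibility: $\mathcal S_{P_s}\lambda=B-B_s$. Invertibility from Step 1 then yields $\lambda=\mathcal S_{P_s}^{-1}(B-B_s)$, which gives \eqref{eq:supp-HB}. Substituting,
\[
\|H_B-G\|_{\mathcal I_{\mathrm{eff}}}^2=\langle\mathcal A_{P_s}^*\lambda,\mathcal I_{\mathrm{eff}}^{-1}\mathcal A_{P_s}^*\lambda\rangle=\langle\lambda,\mathcal S_{P_s}\lambda\rangle ,
\]
which equals the right side of \eqref{eq:supp-affine-value}.

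\emph{Step 3: the law of $B_s$.} $B_s$ is a linear image of the centred Gaussian $G$, so it is Gaussian with mean zero. Its covariance is $\mathcal A_{P_s}\operatorname{Cov}(G)\mathcal A_{P_s}^*=\mathcal S_{P_s}$, since $\operatorname{Cov}(G)=\mathcal I_{\mathrm{eff}}^{-1}$. This last identity holds because $G=\mathcal I_{\mathrm{eff}}^{-1}Z_\Sigma^{\mathrm{eff}}$ with $Z_\Sigma^{\mathrm{eff}}\sim\mathcal N(0,\mathcal I_{\mathrm{eff}})$ (Lemma~\ref{lem:supp-schur-profile}), or directly from the hypothesis of this section.

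I expect no real obstacle. The only step needing care is Step 1, identifying $\mathcal A_{P_s}\mathcal A_{P_s}^*=\Id$ to get surjectivity; everything else is routine completion of squares.
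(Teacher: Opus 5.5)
Your proposal is correct and follows essentially the paper's argument. It obtains positivity of $\mathcal S_{P_s}$ from injectivity of $\mathcal A_{P_s}^*$, characterises the constrained minimiser by $\mathcal I_{\mathrm{eff}}$-orthogonality to $\ker\mathcal A_{P_s}$, substitutes to get the value, and pushes the covariance of $G$ forward; the only cosmetic difference is that you derive $H_B$ from $\mathcal I_{\mathrm{eff}}(H-G)\in\operatorname{range}\mathcal A_{P_s}^*$, whereas the paper checks feasibility and orthogonality of the stated candidate directly.
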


\begin{proof}
For nonzero $B$,
\begin{equation}
\langle B,\mathcal S_{P_s}B\rangle
=
\left\langle
\mathcal A_{P_s}^*B,
\mathcal I_{\mathrm{eff}}^{-1}\mathcal A_{P_s}^*B
\right\rangle>0,
\end{equation}
because $\mathcal A_{P_s}^*$ is injective.  Feasibility of
\eqref{eq:supp-HB} follows from the definition of $\mathcal S_{P_s}$.  If
$N\in\ker(\mathcal A_{P_s})$, then
\begin{equation}
\left\langle
N,\mathcal I_{\mathrm{eff}}(H_B-G)
\right\rangle
=
\left\langle
\mathcal A_{P_s}N,
\mathcal S_{P_s}^{-1}(B-B_s)
\right\rangle=0.
\end{equation}
Thus $H_B-G$ is metric-orthogonal to every feasible direction, proving
optimality.  Substitution gives \eqref{eq:supp-affine-value}.  The covariance
of $B_s$ is \eqref{eq:supp-SP}.
\end{proof}

\subsection{The stratified active-space statistic}\label{subsec:supp-active-statistic}

Let $m_s=r-s$ and define
\begin{equation}
\mathcal C_{P_s}
=
\mathcal S_{P_s}^{-1/2}(\Splus^{k_s}),
\qquad
\mathcal D_{P_s,m_s}
=
\mathcal S_{P_s}^{-1/2}(\mathcal K_{m_s}^{k_s}).
\label{eq:supp-CD}
\end{equation}
Combining Proposition~\ref{prop:supp-supremum-limit},
Theorem~\ref{thm:supp-null-tangent}, and
Lemma~\ref{lem:supp-affine-profile} gives
\begin{equation}
\Lambda_n\Rightarrow
\Delta_{s,r}(P_s)
=
\dist_F^2(Y,\mathcal D_{P_s,m_s})
-
\dist_F^2(Y,\mathcal C_{P_s}),
\label{eq:supp-stratified-limit}
\end{equation}
where $Y=\mathcal S_{P_s}^{-1/2}B_s$ is standard Gaussian in
$\Smat^{k_s}$.

\subsection{Basis invariance and sufficient operator equivalence}

If $\widetilde U_s=U_sO$ with
$O\in\operatorname O(k_s)$, let
$\widetilde{\mathcal A}_{P_s}(H)=\widetilde U_s^\top H\widetilde U_s$ and
\[
\widetilde{\mathcal S}_{P_s}
=\widetilde{\mathcal A}_{P_s}\mathcal I_{\mathrm{eff}}^{-1}
\widetilde{\mathcal A}_{P_s}^*.
\]
Define the orthogonal conjugation operator
\[
\mathcal Q_O:\Smat^{k_s}\longrightarrow\Smat^{k_s},
\qquad
\mathcal Q_O(B)=O^\top BO.
\]
Then
\begin{equation}
	\widetilde{\mathcal S}_{P_s}
	=
	\mathcal Q_O\mathcal S_{P_s}\mathcal Q_O^*.
\end{equation}
Moreover,
\[
\mathcal Q_O(\Splus^{k_s})=\Splus^{k_s},
\qquad
\mathcal Q_O(\mathcal K_{m_s}^{k_s})
=
\mathcal K_{m_s}^{k_s}.
\]
Consequently, the two whitened sets defined in
\eqref{eq:supp-CD} are transformed by the same orthogonal map, and the
standard Gaussian law is invariant under this transformation. Hence the
law in \eqref{eq:supp-stratified-limit} is independent of the chosen
orthonormal basis of $K_s$.

We next record a more general sufficient condition for two reduced
calibration problems to have the same limiting law. Let
$\mathcal S_1$ and $\mathcal S_2$ be self-adjoint positive-definite
operators on the same active space $\Smat^k$, and fix
$0\leq m\leq k$. For $i\in\{1,2\}$, define
\begin{equation}
	\mathcal C_i
	=
	\mathcal S_i^{-1/2}(\Smat^k_+),
	\qquad
	\mathcal D_{i,m}
	=
	\mathcal S_i^{-1/2}(\mathcal K_m^k).
\end{equation}
Suppose that there exist $a>0$ and an orthogonal operator
$\mathcal Q:\Smat^k\to\Smat^k$ such that
\begin{equation}
	\mathcal S_2
	=
	a\,\mathcal Q\mathcal S_1\mathcal Q^*,
	\label{eq:supp-projective-equivalence}
\end{equation}
and
\begin{equation}
	\mathcal Q(\Splus^k)=\Splus^k,
	\qquad
	\mathcal Q(\mathcal K_m^k)=\mathcal K_m^k.
	\label{eq:supp-cone-preservation}
\end{equation}
Since $\mathcal Q$ is orthogonal,
\[
\mathcal S_2^{-1/2}
=
a^{-1/2}\mathcal Q\mathcal S_1^{-1/2}\mathcal Q^*.
\]
Using \eqref{eq:supp-cone-preservation} and the fact that multiplication
of a cone by a positive scalar leaves it unchanged, we obtain
\[
\mathcal C_2=\mathcal Q\mathcal C_1,
\qquad
\mathcal D_{2,m}=\mathcal Q\mathcal D_{1,m}.
\]
Thus, if $Y$ is standard Gaussian in $\Smat^k$,
\[
\dist_F^2(Y,\mathcal D_{2,m})
-
\dist_F^2(Y,\mathcal C_2)
\stackrel{d}{=}
\dist_F^2(Y,\mathcal D_{1,m})
-
\dist_F^2(Y,\mathcal C_1).
\]
Therefore \eqref{eq:supp-projective-equivalence}, together with
\eqref{eq:supp-cone-preservation}, is a sufficient condition for equal
stratified calibration.

This condition is not necessary. Equality of the limiting distributions
requires only equality in law of the corresponding Gaussian
distance-difference functionals.

\subsection{Contiguous alternatives and null transition paths}\label{subsec:supp-local-transitions}

Consider the contiguous sequence
\begin{equation}
\theta_n=\theta_0+n^{-1/2}(a,H),
\qquad a\in\mathbb R^{d_\psi},\quad H\in\Smat^q,
\end{equation}
inside the ambient regular model.  Here $H$ is the matrix component of the
local drift; after nuisance profiling it is the efficient matrix drift.
Under $\Pr_{\theta_n}$, Le Cam's third lemma replaces $B_s$ by a Gaussian
variable with mean $\mathcal A_{P_s}(H)$ and covariance
$\mathcal S_{P_s}$.  Hence the
standard Gaussian in \eqref{eq:supp-stratified-limit} is shifted to
\begin{equation}
Y+\mu_s,
\qquad
\mu_s=\mathcal S_{P_s}^{-1/2}\mathcal A_{P_s}(H).
\label{eq:supp-local-drift}
\end{equation}
The same active sets determine local power.  Regular nuisance drifts cancel
through the efficient-score construction.

For a local null transition, let $t\mapsto\theta(t)\in\Theta_0$ be a
one-sided differentiable path satisfying
$\theta(t)=\theta_0+t(a,H)+o(t)$ as $t\downarrow0$, and set
$\theta_n=\theta(n^{-1/2})$.
The likelihood ratios of $\Pr_{\theta_n}$ with respect to
$\Pr_{\theta_0}$ have the usual LAN limit, so Le Cam's third lemma yields
the same shifted score.  The constrained parameter sets are still those
centred at $\theta_0$; their local limits are therefore the two tangent sets
already used in \eqref{eq:supp-stratified-limit}.  This proves the null
transition limit in the main article.

Because the path remains in the null, its derivative satisfies
$\mathcal A_{P_s}(H)\in\mathcal K_{m_s}^{k_s}$ by the necessity part of the
tangent-cone theorem.  Conversely, for any
$C\in\mathcal K_{m_s}^{k_s}$, the factor construction in
\eqref{eq:supp-factor-path} gives a null path with active derivative $C$.
Thus no admissible active transition drift is omitted.

\section{Isotropic strata, spectral projections, and computation}
\label{sec:supp-isotropic}

Subsection~\ref{subsec:supp-spectral-projection} proves Proposition~\ref{prop:isotropic-spectral}; Subsections~\ref{subsec:supp-fixed-ordering} and~\ref{subsec:supp-isotropic-transition} prove Theorem~\ref{thm:isotropic-dominance} and Corollary~\ref{cor:isotropic-transition-calibration}; Subsection~\ref{subsec:supp-corank-one} proves Proposition~\ref{prop:corank-one-transition}.

Assume that, after an irrelevant positive scaling,
$\mathcal S_{P_s}=\Id$ on $\Smat^{k_s}$ for every stratum.  Throughout this
section, $Y$ denotes a standard Gaussian matrix in the active symmetric
matrix space, $\lambda_j(X)$ is its $j$th largest eigenvalue when applied to
a symmetric matrix $X$, and $x_+=\max(x,0)$.  Then
$\mathcal C_{P_s}=\Splus^{k_s}$ and
$\mathcal D_{P_s,m_s}=\mathcal K_{m_s}^{k_s}$.

\subsection{Projection onto a rank-constrained PSD cone}\label{subsec:supp-spectral-projection}

Let
$X=V\diag(\lambda_1,\ldots,\lambda_k)V^\top$ be a spectral decomposition,
where $V\in\operatorname O(k)$ and
$\lambda_1\ge\cdots\ge\lambda_k$.  The Frobenius projection onto
$\Splus^k$ replaces negative eigenvalues by zero, while the projection onto
$\mathcal K_m^k$ retains only the $m$ largest positive eigenvalues.  Hence
\begin{equation}
\dist_F^2(X,\mathcal K_m^k)-\dist_F^2(X,\Splus^k)
=
\sum_{j=m+1}^k(\lambda_j^+)^2.
\label{eq:supp-isotropic-stat}
\end{equation}
The formula applies equally to $X=Y$ and to $X=Y+C$ for any fixed
symmetric shift $C\in\Smat^k$.

\subsection{Fixed-stratum ordering through interlacing}\label{subsec:supp-fixed-ordering}

Couple standard Gaussian matrices $Y_1,\ldots,Y_q$ so that $Y_k$ is the
leading $k\times k$ principal submatrix of $Y_{k+1}$.  If
$\lambda_1^{(k)}\ge\cdots\ge\lambda_k^{(k)}$ are the eigenvalues of $Y_k$,
Cauchy interlacing gives
\begin{equation}
\lambda_j^{(k+1)}\ge\lambda_j^{(k)}\ge\lambda_{j+1}^{(k+1)}.
\label{eq:supp-interlacing}
\end{equation}
For
\begin{equation}
\Delta_s
=
\sum_{j=r-s+1}^{q-s}\{\lambda_j^{(q-s)+}\}^2,
\end{equation}
the second inequality in \eqref{eq:supp-interlacing} pairs every term of
$\Delta_s$ with a larger term of $\Delta_{s+1}$.  Therefore
\begin{equation}
\Delta_0\le\Delta_1\le\cdots\le\Delta_r
\qquad\hbox{almost surely}.
\label{eq:supp-pathwise-dominance}
\end{equation}
This proves the fixed-stratum part of
the top-stratum dominance theorem in the main article.

\subsection{Isotropic null-transition dominance}\label{subsec:supp-isotropic-transition}

Let $k=q-s$, $m=r-s$, and $p=k-m=q-r$.  For an admissible active null drift
$C\in\mathcal K_m^k$ (equivalently, $C\succeq0$ and $\rank(C)\le m$), choose a $p$-dimensional subspace
$L\subseteq\ker(C)$ and let $V\in\mathbb R^{k\times p}$ have orthonormal
columns spanning $L$.  The Poincar\'e separation inequalities give
\begin{equation}
\lambda_{m+j}(Y+C)
\le
\lambda_j\{V^\top(Y+C)V\}
=
\lambda_j(V^\top YV),
\qquad j=1,\ldots,p.
\label{eq:supp-transition-separation}
\end{equation}
Because $Y$ is isotropic Gaussian, $V^\top YV$ is standard Gaussian in
$\Smat^p$.  Applying the increasing map $x\mapsto(x_+)^2$ and summing gives
\begin{equation}
\sum_{j=m+1}^{k}\{\lambda_j(Y+C)_+\}^2
\le
\sum_{j=1}^{p}\{\lambda_j(V^\top YV)_+\}^2.
\label{eq:supp-transition-dominance}
\end{equation}
The right-hand side is exactly the rank-$r$ top-stratum statistic.  This
proves the transition part of the top-stratum dominance theorem in the main article.

\subsection{The anisotropic active-corank-one case}\label{subsec:supp-corank-one}

Let $m=k-1$, let $\mathcal S$ be a self-adjoint positive-definite
operator on $\Smat^k$, and put
\begin{equation}
\mathcal C=\mathcal S^{-1/2}(\Splus^k),
\qquad
\mathcal D=\mathcal S^{-1/2}(\mathcal K_{k-1}^k).
\end{equation}
Since $\mathcal K_{k-1}^k=\partial\Splus^k$ and an invertible linear map is
a homeomorphism,
\begin{equation}
\mathcal D=\partial\mathcal C.
\label{eq:supp-boundary-image}
\end{equation}
Let $Y$ be standard Gaussian in $\Smat^k$.  Fix a drift
$\mu\in\partial\mathcal C$ and choose a unit outward supporting normal
$n\in\Smat^k$ at $\mu$, meaning that $\|n\|_F=1$ and
$\langle n,z-\mu\rangle_F\le0$ for every $z\in\mathcal C$.

For $x=\mu+Y$ outside $\mathcal C$, every nearest point in the closed convex
cone lies on its boundary, and hence
\begin{equation}
\dist(x,\partial\mathcal C)=\dist(x,\mathcal C).
\end{equation}
The likelihood-ratio distance difference is then zero.  If $x\in\mathcal C$,
move from $x$ in the outward direction $n$ until first reaching the boundary.
The supporting inequality shows that the required displacement is no larger
than $-\langle n,x-\mu\rangle=-\langle n,Y\rangle$.  Consequently,
\begin{equation}
\dist^2(x,\partial\mathcal C)-\dist^2(x,\mathcal C)
\le
\{-\langle n,Y\rangle_+\}^2.
\label{eq:supp-corank-one-bound}
\end{equation}
The scalar $-\langle n,Y\rangle$ is standard normal.  This proves the active-corank-one transition proposition in the main article.

\subsection{Numerical check for \texorpdfstring{$q=3$ and $r=2$}{q=3 and r=2}}

A simulation with $10^6$ independent standard Gaussian symmetric matrices
produces the following empirical quantiles:
\begin{table}[ht]
\caption{Isotropic fixed-stratum limits for $q=3$ and $r=2$.}
\label{tab:supp-isotropic}
\centering
\begin{tabular}{rrrrr}
\toprule
$s$ & $k_s$ & $m_s$ & empirical $0.90$ quantile & empirical $0.95$ quantile\\
\midrule
0&3&2&0.0000&0.0000\\
1&2&1&0.0356&0.2304\\
2&1&0&1.6424&2.7055\\
\bottomrule
\end{tabular}
\end{table}
The ordering is not an artefact of simulation; it is the pathwise inequality
\eqref{eq:supp-pathwise-dominance}.

\subsection{Anisotropic lower-stratum computation}

For general $\mathcal S_{P_s}$, projection onto
$\mathcal D_{P_s,m_s}$ is nonconvex.  It can be written as
\begin{equation}
\min_{B\succeq0,\,\rank(B)\le m_s}
\|\mathcal S_{P_s}^{-1/2}B-Y\|_F^2.
\label{eq:supp-nonconvex-projection}
\end{equation}
A factorisation $B=XX^\top$ with
$X\in\mathbb R^{k_s\times m_s}$ gives a smooth nonconvex least-squares
problem.  It is invariant under the right action $X\mapsto XQ$ of
$Q\in\operatorname O(m_s)$.  The numerical example in the main article uses the smallest nontrivial
case $q=2$, $r=1$, $s=0$.  For
$B=\begin{psmallmatrix}b_{11}&b_{12}\\b_{12}&b_{22}\end{psmallmatrix}$,
define the Lorentz-coordinate isometry
\begin{equation}
(u,v,w)=
\left(
\frac{b_{11}+b_{22}}{\sqrt2},
\frac{b_{11}-b_{22}}{\sqrt2},
\sqrt2\,b_{12}
\right).
\label{eq:supp-Lorentz-coordinates}
\end{equation}
It satisfies $\|B\|_F^2=u^2+v^2+w^2$ and
$B\succeq0$ if and only if $u\ge(v^2+w^2)^{1/2}$.  After the anisotropic
whitening used in the example, and with the relabelling
$(a,b,c)=(v,w,u)$, the alternative and null active sets are
\begin{equation}
\mathcal C_\gamma
=
\{(a,b,c):c\ge(a^2+\gamma b^2)^{1/2}\},
\qquad
\mathcal D_\gamma=\partial\mathcal C_\gamma,
\quad \gamma>0.
\label{eq:supp-lower-elliptic-cones}
\end{equation}
If $Y\notin\mathcal C_\gamma$, its metric projection onto the cone belongs
to the boundary, so the two distances in the likelihood-ratio statistic are
equal.  If $Y=(a,b,c)$ lies in the interior, the statistic is the squared
distance to $\partial\mathcal C_\gamma$.  At a generic point, a Lagrange
multiplier $\tau$ for the boundary equation satisfies
\begin{equation}
\frac{a^2}{(1-\tau)^2}
+
\frac{\gamma b^2}{(1-\gamma\tau)^2}
=
\frac{c^2}{(1+\tau)^2},
\qquad
0<\tau<\min(1,\gamma^{-1}).
\label{eq:supp-lower-aniso-tau}
\end{equation}
The left side minus the right side is strictly increasing on this interval,
so the generic root is unique and the projected point is
\begin{equation}
\left(
\frac{a}{1-\tau},
\frac{b}{1-\gamma\tau},
\frac{c}{1+\tau}
\right).
\label{eq:supp-lower-aniso-projection}
\end{equation}
The qualifier ``generic'' is necessary.  On exceptional coordinate axes the
minimum can occur at the closed endpoint and the nearest boundary point can
be nonunique, although its squared distance is unique.  If $0<\gamma<1$,
the endpoint $\tau=1$ can occur only when $a=0$ and
\begin{equation}
\frac{c^2}{4}\ge
\frac{\gamma b^2}{(1-\gamma)^2}.
\label{eq:supp-lower-aniso-endpoint-small-gamma}
\end{equation}
Then the projected point can be written
$(\widehat a,\widehat b,\widehat c)$ with
\begin{equation}
\widehat b=\frac{b}{1-\gamma},\qquad
\widehat c=\frac{c}{2},\qquad
\widehat a^2=\widehat c^2-\gamma\widehat b^2,
\end{equation}
with either sign of $\widehat a$ when the right-hand side is positive.  If
$\gamma>1$, the endpoint $\tau=\gamma^{-1}$ can occur only when $b=0$ and
\begin{equation}
\left(\frac{c}{1+\gamma^{-1}}\right)^2
\ge
\left(\frac{a}{1-\gamma^{-1}}\right)^2.
\label{eq:supp-lower-aniso-endpoint-large-gamma}
\end{equation}
Then the projected point can be written
$(\widehat a,\widehat b,\widehat c)$ with
\begin{equation}
\widehat a=\frac{a}{1-\gamma^{-1}},\qquad
\widehat c=\frac{c}{1+\gamma^{-1}},\qquad
\widehat b^2=\frac{\widehat c^2-\widehat a^2}{\gamma},
\end{equation}
again with either sign of $\widehat b$ when the right-hand side is positive.  These
exceptional axes have Gaussian probability zero but are handled explicitly in
the reproduction code.  For $\gamma=1$, the circular-cone identity reduces
to
\begin{equation}
\Delta_1^{\mathrm{low}}(a,b,c)
=
\frac12\{c-(a^2+b^2)^{1/2}\}^2
\mathbf1_{\{c>(a^2+b^2)^{1/2}\}}.
\end{equation}
Let $\Omega(\gamma)$ be the solid angle of $\mathcal C_\gamma$ and define
$v_3(\gamma)=\Omega(\gamma)/(4\pi)=\Pr\{Y\in\mathcal C_\gamma\}$ for
$Y\sim N_3(0,I_3)$.  The explicit integral for $\Omega(\gamma)$ is given in
\eqref{eq:supp-Omega}.  The atom at zero is therefore
$1-v_3(\gamma)$, providing a deterministic check on the Monte Carlo
implementation.  No conic Steiner representation is invoked for
this nonconvex null comparison set.

\section{Positive-level sensitivity on the top stratum}
\label{sec:supp-sensitivity}

Proposition~\ref{prop:supp-value-diff} is the detailed proof of Proposition~\ref{prop:value-sensitivity}.  Lemma~\ref{lem:supp-regular-sublevel} and Proposition~\ref{prop:supp-Hadamard-probability} provide the detailed proof of Theorem~\ref{thm:quantile-sensitivity} under Assumption~\ref{ass:surface-domination}.

Let $\mathcal S$ be a self-adjoint positive-definite operator on
$\Smat^k$, set $\mathcal R=\mathcal S^{-1/2}$, and let $Y\in\Smat^k$.
For an operator-valued argument, $\mathrm dV(\mathcal R,Y)[D]$ denotes the
Fr\'echet derivative in the operator direction $D$.  Define
\begin{equation}
V(\mathcal R,Y)
=
\frac12\min_{B\succeq0}\|\mathcal RB-Y\|_F^2,
\qquad
T(\mathcal R,Y)=\|Y\|_F^2-2V(\mathcal R,Y).
\label{eq:supp-value}
\end{equation}

\subsection{Continuity of the minimiser and envelope differentiation}\label{subsec:supp-value-differentiation}

\begin{lemma}[Uniform coercivity]
\label{lem:supp-coercivity}
Let $\mathcal U$ be an operator-norm compact set of invertible linear
operators on $\Smat^k$.
There exists $c>0$ such that
\begin{equation}
\|\mathcal RB\|_F\ge c\|B\|_F
\end{equation}
for every $\mathcal R\in\mathcal U$.  Consequently, minimisers of
\eqref{eq:supp-value} remain in a common compact set when
$(\mathcal R,Y)$ ranges over a compact set.
\end{lemma}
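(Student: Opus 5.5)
The plan is to obtain the lower bound from continuity of the smallest singular value on a compact set, and then to control the minimisers by comparing the objective at a minimiser with its value at the feasible point $B=0$.

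\emph{Step 1: the uniform lower bound.} Define
\[
\sigma(\mathcal R)=\min_{\|B\|_F=1}\|\mathcal RB\|_F,
\]
the smallest singular value of $\mathcal R$ on $\Smat^k$. We have
\[
\bigl|\sigma(\mathcal R)-\sigma(\widetilde{\mathcal R})\bigr|\le\|\mathcal R-\widetilde{\mathcal R}\|_{\mathrm{op}},
\]
because for each unit $B$ the map $\mathcal R\mapsto\|\mathcal RB\|_F$ is $1$-Lipschitz, and a minimum of $1$-Lipschitz functions is $1$-Lipschitz. Hence $\sigma$ is continuous in operator norm. It is strictly positive on invertible operators, since $\mathcal RB=0$ forces $B=0$ and the unit sphere of the finite-dimensional space $\Smat^k$ is compact. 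A continuous positive function attains a positive minimum $c$ on the compact set $\mathcal U$. Homogeneity of $\|\mathcal RB\|_F$ in $B$ then gives $\|\mathcal RB\|_F\ge c\|B\|_F$ for every $B$.

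\emph{Step 2: minimisers stay bounded.} Let $(\mathcal R,Y)$ range over a compact set $\mathcal U\times\mathcal Y$, and set $\rho=\sup_{Y\in\mathcal Y}\|Y\|_F<\infty$. Let $B^*$ be a minimiser of \eqref{eq:supp-value}. Since $B=0$ is feasible,
\[
\|\mathcal RB^*-Y\|_F\le\|Y\|_F .
\]
The triangle inequality then gives
\[
c\|B^*\|_F\le\|\mathcal RB^*\|_F\le 2\|Y\|_F\le 2\rho .
\]
So every minimiser lies in the closed ball $\{B:\|B\|_F\le2\rho/c\}$, which is compact in finite dimensions. Intersecting this ball with the closed cone $\Splus^k$ gives a common compact set containing all minimisers.

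\emph{Step 3: existence of minimisers.} For fixed $(\mathcal R,Y)$, the same estimate shows that the objective exceeds its value at $0$ outside that ball. Minimising the continuous objective over the compact set $\Splus^k\cap\{\|B\|_F\le2\rho/c\}$ therefore gives a minimiser, so the set of minimisers is nonempty.

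No step should be a real obstacle. The only point needing care is the continuity of $\sigma$ in Step 1, and the Lipschitz bound above settles it. It is worth noting that compactness is needed only for the operator argument $\mathcal R$. For $Y$, boundedness of $\mathcal Y$ is enough.
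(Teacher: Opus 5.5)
Your proposal is correct and follows the paper's argument. Both take a positive minimum of the continuous smallest singular value over the compact set $\mathcal U$ and then compare with the feasible point $B=0$; your bound $\|B^*\|_F\le 2\rho/c$ is the one the paper extracts from $(c\|B\|_F-M)^2\le M^2$, while your explicit Lipschitz estimate and the existence argument in Step~3 fill in details the paper leaves implicit.
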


\begin{proof}
The smallest singular value is continuous and strictly positive on
$\mathcal U$, so its minimum is positive.  If
$\|Y\|_F\le M$, comparison with $B=0$ and the reverse triangle inequality
give
\begin{equation}
\frac12(c\|B\|_F-M)^2
\le
\frac12\|\mathcal RB-Y\|_F^2
\le
\frac12M^2
\end{equation}
at a minimiser, yielding a uniform bound on $\|B\|_F$.
\end{proof}

\begin{proposition}[Value differentiability]
\label{prop:supp-value-diff}
For every invertible $\mathcal R$ and $Y$, define
\[
B^*(\mathcal R,Y)=\argmin_{B\in\Splus^k}\|\mathcal RB-Y\|_F^2.
\]
This minimiser is unique and continuous in $(\mathcal R,Y)$.  The map
$\mathcal R\mapsto V(\mathcal R,Y)$ is continuously Fr\'echet differentiable
and, with $B^*=B^*(\mathcal R,Y)$,
\begin{equation}
\mathrm dV(\mathcal R,Y)[\dot{\mathcal R}]
=
\langle\mathcal RB^*-Y,\dot{\mathcal R}B^*\rangle.
\label{eq:supp-dV}
\end{equation}
Therefore
\begin{equation}
\mathrm dT(\mathcal R,Y)[\dot{\mathcal R}]
=
-2\langle\mathcal RB^*-Y,\dot{\mathcal R}B^*\rangle.
\label{eq:supp-dT}
\end{equation}
\end{proposition}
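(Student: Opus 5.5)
The plan is to treat $V$ as the optimal value of a parametric convex program whose feasible set $\Splus^k$ does not depend on the parameter. Differentiability then follows from a Danskin-type envelope argument built on uniqueness and continuity of the minimiser. Fix an invertible $\mathcal R_0$ and $Y_0$, and take a compact neighbourhood $\mathcal U\times\mathcal Y$ of invertible operators and matrices. Lemma~\ref{lem:supp-coercivity} gives a constant $c>0$ with $\|\mathcal RB\|_F\ge c\|B\|_F$ on $\mathcal U$. It also confines all minimisers to a common ball $\overline B_M\cap\Splus^k$. The objective $B\mapsto\frac12\|\mathcal RB-Y\|_F^2$ has Hessian $\mathcal R^*\mathcal R\succeq c^2\Id$, so it is strongly convex on the closed convex cone. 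This yields existence (by coercivity) and uniqueness of $B^*(\mathcal R,Y)$.

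Continuity of the minimiser comes from a standard subsequence argument. Let $(\mathcal R_n,Y_n)\to(\mathcal R,Y)$. The minimisers $B_n^*$ lie in the common compact set, so any subsequence has a convergent further subsequence with limit $\bar B\in\Splus^k$. For every feasible $B$, passing to the limit in $f_n(B_n^*)\le f_n(B)$ gives $f(\bar B)\le f(B)$, because the objectives converge uniformly on compacts. Uniqueness then forces $\bar B=B^*(\mathcal R,Y)$.

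For the derivative, write $f(\mathcal R,B)=\frac12\|\mathcal RB-Y\|_F^2$. Its partial derivative in $\mathcal R$ is $\langle\mathcal RB-Y,\dot{\mathcal R}B\rangle$, and it is jointly continuous. I will sandwich the increment of $V$. Using $B^*=B^*(\mathcal R,Y)$ as a feasible point for the perturbed problem gives the upper bound
\[
V(\mathcal R+\dot{\mathcal R},Y)-V(\mathcal R,Y)\le f(\mathcal R+\dot{\mathcal R},B^*)-f(\mathcal R,B^*)
=\langle\mathcal RB^*-Y,\dot{\mathcal R}B^*\rangle+O(\|\dot{\mathcal R}\|^2).
\]
Using $B^*_{\dot{\mathcal R}}=B^*(\mathcal R+\dot{\mathcal R},Y)$ as a feasible point for the unperturbed problem gives the lower bound $f(\mathcal R+\dot{\mathcal R},B^*_{\dot{\mathcal R}})-f(\mathcal R,B^*_{\dot{\mathcal R}})$. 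By the mean-value theorem this equals $\langle\mathcal R_\xi B^*_{\dot{\mathcal R}}-Y,\dot{\mathcal R}B^*_{\dot{\mathcal R}}\rangle$ for some intermediate $\mathcal R_\xi$. The continuity of $B^*$ established above makes this $\langle\mathcal RB^*-Y,\dot{\mathcal R}B^*\rangle+o(\|\dot{\mathcal R}\|)$ uniformly in the direction. Together the two bounds give Fréchet differentiability with the stated derivative~\eqref{eq:supp-dV}.

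This lower bound is the step that needs the most care. Uniformity over directions holds only because the remainder is controlled by $\|\dot{\mathcal R}\|_{\mathrm{op}}\,\|B^*_{\dot{\mathcal R}}-B^*\|_F$, multiplied by constants bounded on the compact neighbourhood. The derivative is a continuous function of $(\mathcal R,B^*)$, and $B^*$ is continuous, so $V$ is $C^1$ in $\mathcal R$. Finally, $T=\|Y\|_F^2-2V$ gives \eqref{eq:supp-dT} immediately. No KKT regularity is used anywhere in this argument.
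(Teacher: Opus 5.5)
Your proposal is correct and follows essentially the same route as the paper: strong convexity for uniqueness, Lemma~\ref{lem:supp-coercivity} with a maximum-theorem (subsequence) argument for continuity of $B^*$, and a two-sided envelope comparison that uses $B^*(\mathcal R,Y)$ and the perturbed minimiser as feasible points. The only difference is cosmetic: you bound the remainder by $\|\dot{\mathcal R}\|_{\mathrm{op}}\|B^*_{\dot{\mathcal R}}-B^*\|_F$ to obtain Fr\'echet differentiability directly, whereas the paper argues along $\mathcal R+t\dot{\mathcal R}$ and then appeals to continuity of the derivative.
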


\begin{proof}
The Hessian of the objective with respect to $B$ is
$\mathcal R^*\mathcal R$, which is positive definite.  The objective is
therefore strongly convex on the convex set $\Splus^k$, giving uniqueness.
Lemma~\ref{lem:supp-coercivity} and the maximum theorem imply continuity of
the minimiser.  For a perturbation $\mathcal R+t\dot{\mathcal R}$, use
$B^*(\mathcal R,Y)$ as a feasible comparison to obtain the upper directional
bound, and use $B^*(\mathcal R+t\dot{\mathcal R},Y)$ in the reverse
comparison.  Continuity of the minimiser makes the two bounds coincide and
yields \eqref{eq:supp-dV}.  Continuity of the derivative follows from the
same ingredients.
\end{proof}

This argument does not require differentiability of the optimizer.  The
distinction between value differentiability and differentiability of the
metric projection is discussed by \citet{Shapiro2016}.  Strong regularity of
the Karush--Kuhn--Tucker (KKT) generalized equation is needed only if derivatives of $B^*$ or of the
dual multiplier are desired; see \citep{Robinson1980,ChanSun2008}.

\subsection{Automatic regularity of positive levels}

Let $C=\mathcal R(\Splus^k)$.  Its polar cone is
$C^\circ=\{Z\in\Smat^k:\langle Z,X\rangle_F\le0\text{ for all }X\in C\}$,
and $\Pi_C$ and $\Pi_{C^\circ}$ denote the Frobenius metric projections.
Moreau's decomposition gives
\begin{equation}
T(\mathcal R,Y)
=
\|\Pi_C(Y)\|_F^2
=
\dist_F^2(Y,C^\circ).
\label{eq:supp-polar-distance}
\end{equation}
Squared distance to a closed convex set is continuously differentiable, so
\begin{equation}
\nabla_YT(\mathcal R,Y)
=
2\{Y-\Pi_{C^\circ}(Y)\}
=
2\Pi_C(Y)
=
2\mathcal RB^*.
\label{eq:supp-Y-gradient}
\end{equation}
On $\{Y:T(\mathcal R,Y)=c\}$ with $c>0$,
\begin{equation}
\|\nabla_YT(\mathcal R,Y)\|_F=2\sqrt c.
\label{eq:supp-level-gradient}
\end{equation}
Thus every positive level is regular.  The only singular level relevant to
the chi-bar-square atom is $c=0$.

\subsection{A conditional regular-sublevel differentiation lemma}\label{subsec:supp-shape-lemma}

The following finite-dimensional lemma makes explicit which part of the
probability differentiation is geometric and which part is assumed through
the level functional.

\begin{lemma}[Regular-sublevel shape derivative]
\label{lem:supp-regular-sublevel}
Let $G_\vartheta:\mathbb R^d\to\mathbb R$ be indexed by
$\vartheta$ in a finite-dimensional normed vector space $\mathbb V$.  For
a direction $D\in\mathbb V$, write
$\dot G_\vartheta(y)[D]$ for the directional derivative of the map
$\vartheta\mapsto G_\vartheta(y)$.  Fix $(\vartheta_0,c)$ with $c>0$, a
truncation radius $M<\infty$, and some $\delta>0$.  Suppose that, on a
neighbourhood of the compact tube
\begin{equation}
\mathcal T_{M,\delta}
=\{y:\|y\|\le M,\ |G_{\vartheta_0}(y)-c|\le\delta\},
\end{equation}
(i) $G_\vartheta$ is continuously differentiable in $y$, jointly continuous
in $(\vartheta,y)$ together with $\nabla_yG_\vartheta$; (ii)
$\|\nabla_yG_\vartheta(y)\|$ is bounded away from zero for nearby
$\vartheta$; and (iii), for every $t_n\to0$ and $D_n\to D$,
\begin{equation}
\sup_{y\in\mathcal T_{M,\delta}}\left|
\frac{G_{\vartheta_0+t_nD_n}(y)-G_{\vartheta_0}(y)}{t_n}
-
\dot G_{\vartheta_0}(y)[D]
\right|\longrightarrow0.
\label{eq:supp-uniform-parameter-expansion}
\end{equation}
Let $w$ be continuously differentiable on the truncation ball, and let
$\mathcal H^{d-1}$ denote $(d-1)$-dimensional Hausdorff measure.  Define the
level functional
\begin{equation}
H_{\vartheta,D}^{(M)}(u)
=
\int_{\{G_\vartheta=u,\ \|y\|\le M\}}
\frac{\dot G_\vartheta(y)[D]}{\|\nabla_yG_\vartheta(y)\|}
 w(y)\,\mathrm d\mathcal H^{d-1}(y).
\end{equation}
If $(\vartheta,u,D)\mapsto H_{\vartheta,D}^{(M)}(u)$ is continuous at
$(\vartheta_0,c,D)$, uniformly for $D$ in bounded sets, then
\begin{equation}
\frac{
\int_{\|y\|\le M}\mathbf1\{G_{\vartheta_0+t_nD_n}(y)\le c\}w(y)\,\mathrm dy
-
\int_{\|y\|\le M}\mathbf1\{G_{\vartheta_0}(y)\le c\}w(y)\,\mathrm dy
}{t_n}
\longrightarrow
-H_{\vartheta_0,D}^{(M)}(c).
\label{eq:supp-sublevel-lemma-conclusion}
\end{equation}
\end{lemma}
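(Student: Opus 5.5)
The plan is to prove the regular-sublevel lemma by the coarea formula in the tube, combined with a mean-value argument in the level variable. Write $\vartheta_n=\vartheta_0+t_nD_n$ and $A_n=\{G_{\vartheta_n}\le c\}$, $A_0=\{G_{\vartheta_0}\le c\}$. The first step is to localise the difference $\mathbf1_{A_n}-\mathbf1_{A_0}$ to the tube $\mathcal T_{M,\delta}$. By (iii), $\sup_{\mathcal T_{M,\delta}}|G_{\vartheta_n}-G_{\vartheta_0}|=O(t_n)$. Joint continuity also gives a uniform bound on a slightly larger compact set, so that $G_{\vartheta_n}\to G_{\vartheta_0}$ uniformly on the truncation ball. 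Hence for large $n$ the symmetric difference $A_n\triangle A_0$ within $\|y\|\le M$ lies in $\{|G_{\vartheta_0}-c|\le Ct_n\}\subset\mathcal T_{M,\delta}$. Only the tube therefore contributes, and (i)–(ii) hold throughout it.

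The second step rewrites each sublevel integral through the coarea formula. Since $\nabla_yG_\vartheta$ is continuous and bounded away from zero on the tube for $\vartheta$ near $\vartheta_0$, we have
\[
\int_{\|y\|\le M,\ |G_\vartheta-c|\le\delta'}\mathbf1\{G_\vartheta\le u\}w\,\mathrm dy=\int_{c-\delta'}^{u}\int_{\{G_\vartheta=v,\|y\|\le M\}}\frac{w}{\|\nabla_yG_\vartheta\|}\,\mathrm d\mathcal H^{d-1}\,\mathrm dv
\]
plus the contribution from below the tube. That lower contribution is identical for $\vartheta_n$ and $\vartheta_0$ up to the localisation of step one.

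Rather than comparing two different functions $G_{\vartheta_n}$ and $G_{\vartheta_0}$ directly, the third step uses an interpolation. Put $\Phi(\lambda)=\int_{\|y\|\le M}\mathbf1\{G_{\vartheta_0+\lambda t_nD_n}(y)\le c\}w(y)\,\mathrm dy$ for $\lambda\in[0,1]$. The goal is to show that $\Phi$ is differentiable with
\[
\Phi'(\lambda)=-t_n H^{(M)}_{\vartheta_0+\lambda t_nD_n,D_n}(c)+o(t_n)
\]
uniformly in $\lambda$. To do so, differentiate the moving sublevel set at fixed $\vartheta$ via the standard Reynolds/shape formula: at a regular level, perturbing $G$ by $\epsilon g$ changes the sublevel measure by $-\epsilon\int_{\{G=c\}}g\,w/\|\nabla G\|\,\mathrm d\mathcal H^{d-1}+o(\epsilon)$. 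This formula itself follows from the coarea representation above and the implicit function theorem applied to the level sets. The uniform difference-quotient hypothesis (iii) supplies $g\approx\dot G[D]$ uniformly on the tube, at cost $o(t_n)$. Integrating $\Phi'$ over $[0,1]$ and dividing by $t_n$ gives an average of $-H^{(M)}_{\vartheta_0+\lambda t_nD_n,D_n}(c)$. The assumed continuity of $(\vartheta,u,D)\mapsto H^{(M)}_{\vartheta,D}(u)$, uniform on bounded $D$, sends this average to $-H^{(M)}_{\vartheta_0,D}(c)$, which proves \eqref{eq:supp-sublevel-lemma-conclusion}.

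The main obstacle is that hypothesis (iii) controls only $G$, not derivatives of $\dot G$, so the intermediate $\vartheta$ derivative cannot be taken literally. To avoid this, I would replace the calculus in $\lambda$ by a sandwich argument. By (iii), on the tube
\[
\{G_{\vartheta_0}+t_n\dot G_{\vartheta_0}[D]\le c-\varepsilon t_n\}\subset A_n\subset\{G_{\vartheta_0}+t_n\dot G_{\vartheta_0}[D]\le c+\varepsilon t_n\}.
\]
The problem then reduces to the perturbation of a single fixed $C^1$ function $G_{\vartheta_0}$ by the continuous function $t_n\dot G_{\vartheta_0}[D]\pm\varepsilon t_n$. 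For this, flow along $\nabla G_{\vartheta_0}/\|\nabla G_{\vartheta_0}\|^2$ in tubular coordinates and apply coarea, which gives the limits $-H^{(M)}_{\vartheta_0,D}(c)\pm\varepsilon\int_{\{G_{\vartheta_0}=c\}}w/\|\nabla G_{\vartheta_0}\|\,\mathrm d\mathcal H^{d-1}$. Since $\varepsilon$ is arbitrary, the two bounds coincide in the limit. The continuity of $H$ in $u$ is what lets the level $c\pm O(t_n)$ be replaced by $c$ in this step. Boundary effects at $\|y\|=M$ must be checked to be $o(t_n)$, using continuity of $H$ as well, and this is the most delicate verification.
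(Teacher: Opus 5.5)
Your final argument, after you discard the $\lambda$-interpolation, takes a genuinely different route from the paper. Its interior part works, but there is a real gap at the truncation sphere.

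\textbf{How the routes differ.} The paper works with the perturbed functions themselves. It covers $\{G_\vartheta=c\}$ by implicit-function graph charts valid uniformly for $\vartheta$ near $\vartheta_0$, which is where (i)--(ii) at nearby parameters are used. It then shows $(b_{\vartheta_0+t_nD_n}-b_{\vartheta_0})/t_n\to-\dot G_{\vartheta_0}[D]/\partial_jG_{\vartheta_0}$, differentiates under the moving endpoint, and converts with the area formula.

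You instead use (iii) to trap $A_n$ between sublevel sets of $G_{\vartheta_0}+t_n(g\mp\varepsilon)$, with $g=\dot G_{\vartheta_0}[D]$. All the geometry then concerns the single function $G_{\vartheta_0}$. This uses the $C^1$ structure and the gradient bound only at $\vartheta_0$. Nearby parameters enter only through (iii) and through continuity in $y$, which makes $g$ continuous on the tube as a uniform limit of continuous difference quotients.

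\textbf{Routine repairs.} Three points need fixing.
\begin{enumerate}
\item The field $\nabla G_{\vartheta_0}/\|\nabla G_{\vartheta_0}\|^2$ is merely continuous, so its flow need not be unique and gives no tubular coordinates. Use the paper's graph charts and area identity instead. They give the first variation of the fixed function using only continuity of $g$, $\nabla G_{\vartheta_0}$ and $w$, so away from the sphere nothing about $H$ is needed.
\item The inclusion sandwich presupposes $w\ge0$. For signed $w$, bound the error by $\sup|w|$ times the Lebesgue measure of the shell $\{c-\varepsilon|t_n|<G_{\vartheta_0}+t_ng\le c+\varepsilon|t_n|\}$, which is $O(\varepsilon|t_n|)$.
\item Localising to the tube uses $G_{\vartheta_n}\to G_{\vartheta_0}$ uniformly on the whole ball. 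Hypotheses (i)--(iii) do not give this, since they hold only near the tube. The paper's proof relies on the same tacit assumption.
\end{enumerate}

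\textbf{The gap at the truncation sphere.} Your indicated remedy does not close it. Suppose $\Gamma=\{G_{\vartheta_0}=c\}\cap\{\|y\|=M\}$ has positive $\mathcal H^{d-1}$-measure. Parts of the level set pushed out of the ball are then simply lost, so the truncation produces one-sided contributions of order $t_n$, not $o(t_n)$. Consequently the limits $-H^{(M)}_{\vartheta_0,D}(c)\pm\varepsilon\int w/\|\nabla G_{\vartheta_0}\|\,\mathrm d\mathcal H^{d-1}$ that you assert for the sandwich bounds are false.

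Continuity of $H$ in $u$ cannot repair this. Take $G_\vartheta(y)=\|y\|+\vartheta y_1$, $\vartheta_0=0$, $D=1$, $c=M$, $w\equiv1$, with $\delta<M$.
\begin{itemize}
\item Hypotheses (i)--(iii) hold, and $H^{(M)}_{0,1}(u)=\int_{\{\|y\|=u\}\cap\{\|y\|\le M\}}y_1\,\mathrm d\mathcal H^{d-1}=0$ for every $u$.
\item Yet for $t_n\downarrow0$ the quotient in \eqref{eq:supp-sublevel-lemma-conclusion} tends to $-\int_{\{\|y\|=M,\,y_1>0\}}y_1\,\mathrm d\mathcal H^{d-1}\ne0$. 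Your sandwich, computed correctly, reproduces this value rather than $-H^{(M)}_{0,1}(M)=0$.
\item The example is excluded only by the $\vartheta$-part of the hypothesis: $H^{(M)}_{s,1}(M)\to\int_{\{\|y\|=M,\,y_1>0\}}y_1\,\mathrm d\mathcal H^{d-1}$ as $s\downarrow0$.
\end{itemize}

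In general the boundary term depends on the sign of $g$ on $\Gamma$, whereas continuity in $u$ controls only $\int_\Gamma g\,w/\|\nabla G_{\vartheta_0}\|$. To use continuity in $\vartheta$ you must examine the level sets of the perturbed functions $G_{\vartheta_0+sD}$ near $\Gamma$. That is precisely what the fixed-function sandwich avoids, and it is where the paper's closing appeal to continuity of the level functional does its work.

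\textbf{Two ways to close it.} Either carry out that boundary analysis, or add the hypothesis $\mathcal H^{d-1}(\Gamma)=0$, under which your argument is complete. For the application in Proposition~\ref{prop:supp-Hadamard-probability}, where $M\to\infty$, the extra hypothesis costs nothing. The sets $\{G_{\vartheta_0}=c\}\cap\{\|y\|=M\}$ are disjoint for distinct $M$ and lie in a set of locally finite $\mathcal H^{d-1}$-measure, so at most countably many radii are exceptional.
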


\begin{proof}
The regular level set in the truncation ball is compact.  By the implicit
function theorem it admits a finite cover by coordinate charts in which one
coordinate, say $y_j$, is a continuously differentiable graph over the
remaining coordinates; the gradient lower bound and compactness make the
charts valid uniformly for nearby parameters.  Choose a continuously
differentiable partition of unity subordinate to this cover.  In one chart,
the boundary graph $b_\vartheta(z)$ satisfies
$G_\vartheta\{z,b_\vartheta(z)\}=c$.  The uniform expansion
\eqref{eq:supp-uniform-parameter-expansion} gives
\begin{equation}
\frac{b_{\vartheta_0+t_nD_n}(z)-b_{\vartheta_0}(z)}{t_n}
\longrightarrow
-
\frac{\dot G_{\vartheta_0}\{z,b_{\vartheta_0}(z)\}[D]}
{\partial_jG_{\vartheta_0}\{z,b_{\vartheta_0}(z)\}},
\end{equation}
uniformly on compact chart domains.  Differentiating the integral with its
moving endpoint and summing the partition-of-unity contributions yields
\eqref{eq:supp-sublevel-lemma-conclusion}; the graph area identity converts
$|\partial_jG|^{-1}$ into
$\|\nabla G\|^{-1}\,\mathrm d\mathcal H^{d-1}$.  Contributions outside the
finite cover vanish for sufficiently small perturbations because their
$G_{\vartheta_0}$ values are separated from $c$.  Continuity of the level
functional identifies the prescribed level and completes the proof.
\end{proof}

\subsection{Application to the Gaussian projection statistic}\label{subsec:supp-probability-derivative}

Identify $\Smat^k$ with $\mathbb R^d$, where $d=k(k+1)/2$, through the
Frobenius-orthonormal coordinates, and let
$\phi(Y)=(2\pi)^{-d/2}\exp(-\|Y\|_F^2/2)$ be the standard Gaussian density.
Define the distribution function
\begin{equation}
F_{\mathcal R}(c)=
\Pr\{T(\mathcal R,Y)\le c\}
=\int_{\Smat^k}\mathbf1\{T(\mathcal R,Y)\le c\}\phi(Y)\,\mathrm dY,
\end{equation}
where $Y$ is standard Gaussian in $\Smat^k$.  For $\alpha\in(0,1)$, set
\begin{equation}
q_\alpha(\mathcal R)=\inf\{c:F_{\mathcal R}(c)\ge1-\alpha\}.
\end{equation}
When it exists at $c>0$, $f_{\mathcal R}(c)$ denotes the density of the
continuous component of this law.  For a self-adjoint operator direction
$D$ on $\Smat^k$, define
\begin{equation}
h_{\widetilde{\mathcal R},D}(t)
=
\int_{\{T(\widetilde{\mathcal R},Y)=t\}}
\frac{\mathrm dT(\widetilde{\mathcal R},Y)[D]}
{\|\nabla_YT(\widetilde{\mathcal R},Y)\|_F}
\phi(Y)\,\mathrm d\mathcal H^{d-1}(Y).
\label{eq:supp-level-functional}
\end{equation}
Assumption~\ref{ass:surface-domination} of the main article is the continuity and tail condition
needed to pass from the truncated lemma to the Gaussian integral.

\begin{proposition}[Conditional Hadamard probability derivative]
\label{prop:supp-Hadamard-probability}
Under Assumption~\ref{ass:surface-domination} of the main article, for every $t_n\to0$ and every
self-adjoint $D_n\to D$ such that $\mathcal R+t_nD_n$ remains invertible,
\begin{equation}
\frac{F_{\mathcal R+t_nD_n}(c)-F_{\mathcal R}(c)}{t_n}
\longrightarrow
-h_{\mathcal R,D}(c).
\label{eq:supp-Hadamard-probability}
\end{equation}
\end{proposition}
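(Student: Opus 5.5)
The plan is to reduce Proposition~\ref{prop:supp-Hadamard-probability} to Lemma~\ref{lem:supp-regular-sublevel}, applied with $G_\vartheta(y)=T(\vartheta,Y)$, $\vartheta=\widetilde{\mathcal R}$ ranging over self-adjoint invertible operators near $\mathcal R$, and weight $w=\phi$. A truncation argument then removes the radius $M$, and the Gaussian-tail part of Assumption~\ref{ass:surface-domination} controls what lies outside it. Concretely, for each $M$ I split
\[
F_{\widetilde{\mathcal R}}(c)=\int_{\|Y\|_F\le M}\mathbf1\{T(\widetilde{\mathcal R},Y)\le c\}\phi\,\mathrm dY+\int_{\|Y\|_F>M}\mathbf1\{T(\widetilde{\mathcal R},Y)\le c\}\phi\,\mathrm dY .
\]
I handle the first piece by the lemma and the second by tightness.

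First I verify the lemma's hypotheses (i)--(iii) on a tube $\mathcal T_{M,\delta}$ with $\delta<c/2$.
\begin{itemize}
\item[(i)] By \eqref{eq:supp-polar-distance}--\eqref{eq:supp-Y-gradient}, $T$ is $C^1$ in $Y$ with gradient $2\widetilde{\mathcal R}B^*(\widetilde{\mathcal R},Y)$. This is jointly continuous by Proposition~\ref{prop:supp-value-diff}, which gives continuity of $B^*$.
\item[(ii)] By \eqref{eq:supp-level-gradient}, $\|\nabla_YT\|_F=2\sqrt{T}\ge2\sqrt{c-\delta}>0$ on the tube. For nearby $\widetilde{\mathcal R}$ this persists, because $T$ is jointly continuous and the tube is compact.
\item[(iii)] I use a mean-value argument. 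Proposition~\ref{prop:supp-value-diff} gives continuous Fr\'echet differentiability in $\mathcal R$, with derivative $-2\langle\mathcal RB^*-Y,DB^*\rangle$ jointly continuous in $(\mathcal R,Y,D)$. So $t_n^{-1}\{T(\mathcal R+t_nD_n,Y)-T(\mathcal R,Y)\}=\int_0^1\mathrm dT(\mathcal R+ut_nD_n,Y)[D_n]\,\mathrm du$. This converges to $\mathrm dT(\mathcal R,Y)[D]$ uniformly on the compact tube, by uniform continuity on a compact neighbourhood. Lemma~\ref{lem:supp-coercivity} keeps $B^*$ in a compact set there.
\end{itemize}
Parts~1--2 of Assumption~\ref{ass:surface-domination} supply the continuity of the level functional that the lemma requires. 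The truncated functional $H^{(M)}$ differs from $h$ only by the part of the level surface outside the ball, which part~3 makes uniformly small. The lemma then yields the truncated derivative $-H^{(M)}_{\mathcal R,D}(c)$.

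The hardest step is removing the truncation, and I expect this to be the main obstacle. The difference quotient of the outer piece must be shown to be small uniformly in $n$ as $M\to\infty$. The natural route is to apply the same chart or tube argument on shells $\{M\le\|Y\|_F\le M'\}$, or equivalently to write the outer difference via the coarea formula as an integral over levels $t$ between $c$ and the perturbed level. This expresses the outer quotient as an average over $t\in J$ of outer level-surface integrals of $|\mathrm dT[D_n]|/\|\nabla T\|\,\phi$. Tail tightness (part~3), uniform over $\widetilde{\mathcal R}\in\mathcal U$, $t\in J$, $\|D\|\le1$, then bounds this by $\varepsilon(M)\to0$. The linear-in-$\|Y\|^2$ growth of $\mathrm dT$ guaranteed by the coercivity bound is absorbed by $\phi$.

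To make this rigorous I would use the $C^1$ structure in $Y$ to write
\[
F_{\mathcal R+t_nD_n}(c)-F_{\mathcal R}(c)
\]
restricted to the shell as a one-parameter integral along $u\mapsto\mathcal R+ut_nD_n$. Each integrand is a level-set flux, and the tightness applies to it. Letting $n\to\infty$, then $M\to\infty$, with $H^{(M)}\to h$, gives \eqref{eq:supp-Hadamard-probability}.
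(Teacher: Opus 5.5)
Your proposal follows the paper's own proof. You apply Lemma~\ref{lem:supp-regular-sublevel} on truncation balls with $G_{\widetilde{\mathcal R}}=T(\widetilde{\mathcal R},\cdot)$ and $w=\phi$, check (i)--(iii) from Proposition~\ref{prop:supp-value-diff}, $\nabla_YT=2\widetilde{\mathcal R}B^*$ and $\|\nabla_YT\|_F=2\sqrt{T}$, and then let $M\to\infty$ using the tail part of Assumption~\ref{ass:surface-domination}; your mean-value check of (iii) and coarea bound on the outer region just spell out what the paper compresses into one line.

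One small point, which the paper also passes over. Parts~2--3 of the assumption give continuity of the truncated functional $H^{(M)}$ only up to an error $2\varepsilon(M)$, so you need either an approximate form of the lemma or a direct check of continuity of $H^{(M)}$. The direct check is easy for large $M$: tangency of the sphere $\|Y\|_F=M$ to a level set $\{T=u\}$ with $u>0$ forces $Y\in\widetilde{\mathcal R}(\Splus^k)$ and hence $u=M^2$. So the two are transversal for levels near $c$.
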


\begin{proof}
On each truncation ball, Proposition~\ref{prop:supp-value-diff} gives the
uniform parameter expansion in
\eqref{eq:supp-uniform-parameter-expansion}.  Joint continuity of the
minimiser gives joint continuity of $T$ and its operator derivative.
Equation \eqref{eq:supp-Y-gradient} gives joint continuity of the Gaussian
gradient, and \eqref{eq:supp-level-gradient} bounds its norm away from zero
on every compact positive-level tube.  Lemma~\ref{lem:supp-regular-sublevel}
therefore applies with $G_{\mathcal R}=T(\mathcal R,\cdot)$ and
$w=\phi$.  The tail condition in Assumption~\ref{ass:surface-domination} lets the truncation radius
tend to infinity uniformly over bounded direction sequences, and its level
continuity evaluates the limit at the prescribed $c$.
\end{proof}

Write $\mathrm d_H$ for the Hadamard directional derivative with respect
to the operator argument, namely the common limit along all
$t_n\to0$ and $D_n\to D$ for which $\mathcal R+t_nD_n$ remains in the
operator domain.  Because $\|\nabla_YT\|_F=2\sqrt c$ on the target level,
\begin{equation}
\mathrm d_HF_{\mathcal R}(c)[D]
=
-\frac1{2\sqrt c}
\int_{\{Y:T(\mathcal R,Y)=c\}}\mathrm dT(\mathcal R,Y)[D]\phi(Y)
\,\mathrm d\mathcal H^{d-1}(Y).
\label{eq:supp-dF}
\end{equation}
If $q_\alpha(\mathcal R)>0$, the continuous component has a density that is
continuous and positive at the target quantile, and Assumption~\ref{ass:surface-domination} holds on
a neighbourhood of that level, the Hadamard implicit-function theorem gives
\begin{equation}
\mathrm d_Hq_\alpha(\mathcal R)[D]
=
-\frac{\mathrm d_HF_{\mathcal R}\{q_\alpha(\mathcal R)\}[D]}
{f_{\mathcal R}\{q_\alpha(\mathcal R)\}}.
\label{eq:supp-dq}
\end{equation}
For the explicit elliptic-cone family in the main article,
$F_\gamma(c)=\sum_{j=0}^3v_j(\gamma)F_{\chi_j^2}(c)$, where
$F_{\chi_j^2}$ is the chi-square distribution function with $j$ degrees of
freedom and $\chi_0^2$ is the atom at zero.  The weights are
continuously differentiable for $\gamma>0$.  Thus the derivative used
numerically is verified independently of the general level-set assumption;
this family-specific check does not establish Assumption~\ref{ass:surface-domination} for arbitrary
transformed PSD cones.

\section{Operator and Grassmannian derivatives}
\label{sec:supp-derivatives}

\subsection{Derivative of the active compression}

Let $\Gr(k,q)$ denote the Grassmann manifold represented as the set of
rank-$k$ orthogonal projectors $P\in\Smat^q$.  Write $P=UU^\top$ with
$U\in\mathbb R^{q\times k}$ satisfying $U^\top U=I_k$, where $I_k$
is the $k\times k$ identity matrix, and define
$\mathcal A_P(H)=U^\top HU$ and $\mathcal A_P^*(B)=UBU^\top$.  For a smooth
curve $P(t)\in\Gr(k,q)$ with $P(0)=P$, a dot denotes differentiation at
$t=0$.  A tangent direction $\dot P\in T_P\Gr(k,q)$ can be represented by
the horizontal lift
\begin{equation}
\dot U=\dot P U,
\qquad U^\top\dot U=0.
\end{equation}
Then
\begin{equation}
\dot{\mathcal A}_P(H)
=
\dot U^\top HU+U^\top H\dot U,
\label{eq:supp-dA}
\end{equation}
and
\begin{equation}
\dot{\mathcal A}_P^*(B)
=
\dot UBU^\top+UB\dot U^\top.
\label{eq:supp-dAstar}
\end{equation}
Define
\begin{equation}
\mathcal S_P=\mathcal A_P\mathcal I_{\mathrm{eff}}^{-1}\mathcal A_P^*.
\end{equation}
At fixed $\mathcal I_{\mathrm{eff}}$,
\begin{equation}
\dot{\mathcal S}_P
=
\dot{\mathcal A}_P\mathcal I_{\mathrm{eff}}^{-1}\mathcal A_P^*
+
\mathcal A_P\mathcal I_{\mathrm{eff}}^{-1}\dot{\mathcal A}_P^*.
\label{eq:supp-dS}
\end{equation}
If $\mathcal I_{\mathrm{eff}}=\mathcal I_{\mathrm{eff}}(t)$ also varies
along the curve, write
$\dot{\mathcal I}_{\mathrm{eff}}=\mathrm d\mathcal I_{\mathrm{eff}}(t)/\mathrm dt|_{t=0}$
and add
\begin{equation}
-
\mathcal A_P\mathcal I_{\mathrm{eff}}^{-1}
\dot{\mathcal I}_{\mathrm{eff}}
\mathcal I_{\mathrm{eff}}^{-1}\mathcal A_P^*.
\label{eq:supp-dS-info}
\end{equation}

\subsection{Derivative of the inverse square root}

Let $\mathcal S$ be a self-adjoint positive-definite operator on a
finite-dimensional Euclidean space, and put
$\mathcal L=\mathcal S^{1/2}$ and $\mathcal R=\mathcal S^{-1/2}$.
The derivative $\dot{\mathcal L}$ is the unique self-adjoint solution of
\begin{equation}
\mathcal L\dot{\mathcal L}
+
\dot{\mathcal L}\mathcal L
=
\dot{\mathcal S}.
\label{eq:supp-Sylvester}
\end{equation}
Uniqueness follows because the spectra of $\mathcal L$ and $-\mathcal L$
are disjoint.  Differentiating
$\mathcal R=\mathcal L^{-1}$ gives
\begin{equation}
\dot{\mathcal R}
=
-\mathcal R\dot{\mathcal L}\mathcal R.
\label{eq:supp-dR}
\end{equation}
Equations \eqref{eq:supp-dS}--\eqref{eq:supp-dR}, composed with
\eqref{eq:supp-dq}, give the derivative with respect to the kernel
projector.

\subsection{Ambient and Riemannian gradients}

Let $\mathcal R(P)=\mathcal S_P^{-1/2}$ and let $q_\alpha(P)$ abbreviate
the upper-$\alpha$ critical value $q_\alpha\{\mathcal R(P)\}$ obtained by
composing the active operator
with the quantile map of Section~\ref{sec:supp-sensitivity}.  Suppose an
ambient symmetric matrix $\Gamma_P\in\Smat^q$ represents its differential:
\begin{equation}
\mathrm dq_\alpha(P)[\dot P]
=
\langle\Gamma_P,\dot P\rangle_F.
\end{equation}
The tangent space in the projector representation is
\[
T_P\Gr(k,q)=\{\dot P\in\Smat^q:P\dot PP=0,\ (I_q-P)\dot P(I_q-P)=0\},
\]
For matrices $A,B$, write $[A,B]=AB-BA$ for their commutator.  The
Frobenius-orthogonal projection of a symmetric ambient matrix onto
$T_P\Gr(k,q)$ is
\begin{equation}
\Pi_{T_P}(\Gamma)
=
[P,[P,\Gamma]]
=
(I_q-P)\Gamma P+P\Gamma(I_q-P).
\end{equation}
Hence
\begin{equation}
\operatorname{grad}q_\alpha(P)
=
[P,[P,\Gamma_P]].
\label{eq:supp-Grassmann-gradient}
\end{equation}
In a basis representation, the horizontal gradient is
\begin{equation}
G_U=2(I_q-UU^\top)\Gamma_PU.
\end{equation}
For a full-column-rank matrix $A$, let $\qf(A)$ denote the $Q$ factor of its
thin QR decomposition, with a fixed positive-diagonal convention for $R$.
A retracted ascent step is
\begin{equation}
U_+=\qf(U+\eta G_U),
\qquad P_+=U_+U_+^\top,
\end{equation}
where $\eta>0$ is the trial step length selected by backtracking.  Equations
\eqref{eq:supp-dS}--\eqref{eq:supp-dR} define the pullback through linear
maps and a Sylvester solve.  The remaining ambient functional is the surface
integral in \eqref{eq:supp-dF}; constructing a stable general Monte Carlo
estimator for that integral is outside the present scope.  The reported
one-dimensional example bypasses this issue because its derivative is
available analytically.

\section{Model calculations and reproducible numerical details}
\label{sec:supp-numerics}

Subsection~\ref{subsec:supp-Gaussian-information} proves Proposition~\ref{prop:Gaussian-Ieff}; the remaining subsections document the calculations used in Section~\ref{sec:applications}.

\subsection{Gaussian covariance information}\label{subsec:supp-Gaussian-information}

Let $V\in\Smat_{++}^m$, and let $X_1,\ldots,X_n$ be independent with
$X_i\sim\mathcal N_m(0,V)$, where $\mathcal N_m$ denotes the
$m$-variate normal law.  For a generic observation $X$ and a covariance
direction $A\in\Smat^m$, the score is
\begin{equation}
\dot\ell_A(X)
=
\frac12\left
\{X^\top V^{-1}AV^{-1}X-\tr(V^{-1}A)
\right
\}.
\end{equation}
The per-observation Fisher inner product is
\begin{equation}
\langle A,B\rangle_V
=
\frac12\tr(V^{-1}AV^{-1}B).
\end{equation}
Endow $\Smat^m$ with the Fisher inner product $\langle\cdot,\cdot\rangle_V$
above, the nuisance space $\mathbb R^{d_\psi}$ with its Euclidean inner
product, and $\Smat^q$ with the Frobenius inner product.  Let $R(\psi)\in\Smat_{++}^m$ be differentiable near $\psi_0$ and let
$L\in\mathbb R^{m\times q}$ be fixed.  For
$V(\psi,\Sigma)=R(\psi)+L\Sigma L^\top$, define the Fr\'echet derivative
operators
\[
\mathcal D_\psi a=\mathrm dR(\psi_0)[a],
\qquad
\mathcal D_\Sigma H=LHL^\top.
\]
Their adjoints below are taken with respect to these specified inner
products.  The information blocks are
\begin{equation}
\mathcal I_{\psi\psi}=\mathcal D_\psi^*\mathcal D_\psi,
\quad
\mathcal I_{\psi\Sigma}=\mathcal D_\psi^*\mathcal D_\Sigma,
\quad
\mathcal I_{\Sigma\Sigma}=\mathcal D_\Sigma^*\mathcal D_\Sigma.
\end{equation}
Assume that $\mathcal D_\psi$ has full column rank, so that
$\mathcal D_\psi^*\mathcal D_\psi$ is invertible.  Taking the Schur
complement gives
\begin{equation}
\mathcal I_{\mathrm{eff}}
=
\mathcal D_\Sigma^*(I-\Pi_\psi)\mathcal D_\Sigma,
\quad
\Pi_\psi
=
\mathcal D_\psi
(\mathcal D_\psi^*\mathcal D_\psi)^{-1}
\mathcal D_\psi^*,
\end{equation}
where $\Id$ is the identity operator on $\Smat^m$ and $\Pi_\psi$ is the
$\langle\cdot,\cdot\rangle_V$-orthogonal projector onto
$\operatorname{range}(\mathcal D_\psi)$.

\subsection{Residual-variance nuisance example}

For
\begin{equation}
V(\tau,\Sigma)=\tau I_3+J\Sigma J^\top,
\qquad J=(e_1,e_2),
\end{equation}
where $e_1,e_2,e_3$ are the standard basis vectors of $\mathbb R^3$ and
$J$ is the $3\times2$ matrix with columns $e_1,e_2$.  At
$(\tau,\Sigma)=(1,0)$, use the Frobenius-orthonormal basis
$E_{11},E_{22},F_{12}$ of $\Smat^2$, where
$F_{12}=(E_{12}+E_{21})/\sqrt2$.  The nuisance derivative is $I_3$,
and the matrix derivatives are the embedded basis elements.  Therefore
\begin{equation}
\mathcal I_{\tau\tau}=\frac12\tr(I_3)=\frac32,
\qquad
\mathcal I_{\Sigma\Sigma}=\frac12\Id,
\qquad
\mathcal I_{\Sigma\tau}
=
\begin{pmatrix}1/2\\1/2\\0\end{pmatrix}.
\end{equation}
The Schur complement is
\begin{equation}
\mathcal I_{\mathrm{eff}}
=
\begin{pmatrix}
1/3&-1/6&0\\
-1/6&1/3&0\\
0&0&1/2
\end{pmatrix}.
\end{equation}
In the Lorentz coordinates $(u,v,w)$ defined in
\eqref{eq:supp-Lorentz-coordinates}, it is
$\diag(1/6,1/2,1/2)$, so the active covariance is
$\diag(6,2,2)$.  The whitened cone has half-angle $\pi/3$.  For a circular cone in $\mathbb R^3$ with half-angle $\beta$, denote its
conic intrinsic-volume weights by $(v_0,v_1,v_2,v_3)$.  They satisfy
\begin{equation}
v_3=\frac{1-\cos\beta}{2},
\qquad
v_0=\frac{1-\sin\beta}{2},
\qquad
v_1=\frac12-v_3,
\qquad
v_2=\frac12-v_0.
\end{equation}
At $\beta=\pi/3$ this gives the weights reported in the main article.

For the finite-sample likelihood, let $d_1\ge d_2$ be the eigenvalues of
the upper $2\times2$ sample-covariance block and let $d_3$ be the third
coordinate sample variance.  For fixed $t=\tau$, maximisation over
$\Sigma\succeq0$ sets the two fitted upper eigenvalues equal to
$\max(d_j,t)$, $j=1,2$.  The profile
objective is therefore
\begin{equation}
g(t)
=
\log t+\frac{d_3}{t}
+
\sum_{j=1}^2
\left\{
\log\max(d_j,t)+\frac{d_j}{\max(d_j,t)}
\right\}.
\end{equation}
The minimum occurs among the stationary points in the intervals determined
by $d_1,d_2$ and the interval endpoints.  In the simulation, all candidates
were evaluated explicitly; no generic nonlinear solver was required.

The experiment used independent standard Gaussian samples, $10^6$
replications for each $n\in\{20,50,100,250,1000\}$, and the asymptotic
critical value $6.1252334478$.  If $\widehat p$ denotes the empirical rejection proportion, its Monte
Carlo standard error was computed as
$\{\widehat p(1-\widehat p)/10^6\}^{1/2}$.

\subsection{Anisotropic cone quadrature and derivative}

For
\begin{equation}
\mathcal C_\gamma
=
\{(u,v,w):u\ge(v^2+\gamma w^2)^{1/2}\},
\qquad \gamma>0,
\label{eq:supp-top-elliptic-cone}
\end{equation}
the solid angle is
\begin{equation}
\Omega(\gamma)
=
\int_0^{2\pi}
\left\{
1-
\frac{a_\gamma(\phi)}{\sqrt{1+a_\gamma(\phi)^2}}
\right\}\,\mathrm d\phi,
\quad
a_\gamma(\phi)=
\sqrt{\cos^2\phi+\gamma\sin^2\phi}.
\label{eq:supp-Omega}
\end{equation}
The conic intrinsic-volume weights and the corresponding chi-bar-square cumulative distribution function (CDF)
are
\begin{equation}
\begin{aligned}
v_3(\gamma)&=\frac{\Omega(\gamma)}{4\pi},
&v_0(\gamma)&=\frac{\Omega(\gamma^{-1})}{4\pi},\\
v_1(\gamma)&=\frac12-v_3(\gamma),
&v_2(\gamma)&=\frac12-v_0(\gamma),
\end{aligned}
\qquad
F_\gamma(c)=\sum_{j=0}^3v_j(\gamma)F_{\chi_j^2}(c).
\label{eq:supp-anisotropic-weights}
\end{equation}
Here $F_{\chi_j^2}$ is the chi-square CDF with $j$ degrees of freedom and
$F_{\chi_0^2}(c)=\mathbf1_{\{c\ge0\}}$.  The reported upper-$5\%$ critical
value is $q_{0.05}(\gamma)=\inf\{c:F_\gamma(c)\ge0.95\}$.
Adaptive Gauss--Kronrod quadrature with absolute and relative tolerances
$10^{-12}$ was used for $\Omega$ and $\Omega'$.  The chi-bar-square CDF was
inverted by a bracketing root finder with tolerance $10^{-12}$.
Differentiation under the one-dimensional integral gives
\begin{equation}
\Omega'(\gamma)
=
-\frac12
\int_0^{2\pi}
\frac{\sin^2\phi}{
a_\gamma(\phi)\{1+a_\gamma(\phi)^2\}^{3/2}}
\,\mathrm d\phi.
\end{equation}
The weight derivatives, followed by implicit differentiation of the mixture
CDF, produce $q_{0.05}'(1)=-0.339124730022$.  An independent central difference at $h=10^{-5}$ equals $-0.339124730075$, with absolute discrepancy $5.34\times10^{-11}$.

\subsection{Ascent protocol}

For
\begin{equation}
\gamma(\theta)=0.25\cos^2\theta+4\sin^2\theta,
\end{equation}
the one-dimensional gradient is
\begin{equation}
\frac{\mathrm d}{\mathrm d\theta}q_{0.05}\{\gamma(\theta)\}
=
q_{0.05}'\{\gamma(\theta)\}
(4-0.25)\sin(2\theta).
\end{equation}
Write
$g(\theta)=\mathrm d q_{0.05}\{\gamma(\theta)\}/\mathrm d\theta$.
At each iteration the algorithm proposes
$\theta_+=\theta+\eta g(\theta)$ and accepts the first
$\eta\in\{1,1/2,1/4,\ldots\}$ satisfying the Armijo condition
\[
q_{0.05}\{\gamma(\theta_+)\}
\ge q_{0.05}\{\gamma(\theta)\}+10^{-4}\eta g(\theta)^2.
\]
Starting from $\theta\in\{0.2,0.6,1.0,1.3\}$, the executed algorithm used
gradient tolerance $10^{-7}$ and minimum step $10^{-12}$.  The canonical
Grassmann angle reported below is
$\bar\theta=\min\{\theta\bmod\pi,\pi-(\theta\bmod\pi)\}\in[0,\pi/2]$.  Every run reached canonical angle below $2.6\times10^{-8}$ and
critical value $5.877087$.  The starts $0.2$ and $1.3$ met the gradient
tolerance; the starts $0.6$ and $1.0$ stopped when no further Armijo increase
was distinguishable above the minimum step, with final gradient magnitude
below $1.7\times10^{-7}$.  The complete trace and backtrack counts are
exported rather than being described as exact convergence.

\begin{remark}[Reproducibility]
The supplied file \texttt{code\_Geometry\_LR\_calibration.R} is the source of every
reported numerical value.  It records the exact critical value, analytic
derivative, Armijo parameters, random-number conventions, seeds, and
replication counts.  Section~\ref{sec:supp-reproducibility} gives the full
file-to-table map.
\end{remark}

\section{Additional remarks on stratified uniformity}
\label{sec:supp-uniformity}

Let
$\mathfrak N_s=\{\Sigma\in\Splus^q:\rank(\Sigma)=s\}$ be the rank-$s$
null stratum, and let $\mathfrak C_s\subset\mathfrak N_s$ be compact.  Assume
on $\mathfrak C_s$ that all nonzero eigenvalues are bounded away from zero,
the information is
uniformly positive definite, and the LAN and localisation conditions hold
uniformly.  Then the proof of
Proposition~\ref{prop:supp-supremum-limit} is uniform in the parameter, and
continuity of the active distance functional gives uniform convergence of
the LRT distribution on $\mathfrak C_s$.  Maximising the fixed-stratum
critical values therefore controls finite unions of such rank-separated
compact sets.

At a rank interface, a smallest positive eigenvalue of order $n^{-1/2}$
appears as a finite deterministic drift in the active experiment.  The null
transition corollary in the main article makes this statement precise for
every differentiable null path.  Under isotropy,
\eqref{eq:supp-transition-dominance} bounds every such translated law by the
top-stratum law.  The same conclusion follows from
\eqref{eq:supp-corank-one-bound} for arbitrary anisotropy when $q-r=1$.
Uniform versions over compact drift sets follow when the LAN remainder,
localisation, and third-lemma approximation are uniform over those sets.

Within the present framework, the unresolved case is anisotropic active
corank $q-r\ge2$.  There the null active set is only a proper subset of the
boundary of the alternative cone, and the rank-constrained translated
projection need not admit either the spectral compression or the
supporting-hyperplane bound used above.

\section{Reproducibility and additional transition experiment}
\label{sec:supp-reproducibility}

All numerical claims in the main article are produced by the supplied base-R
script
\begin{verbatim}
code_Geometry_LR_calibration.R
\end{verbatim}
The option \texttt{--quick} is a smoke test and was not used for reported
values.  A full run writes the following files:
\begin{enumerate}
\item \texttt{isotropic\_fixed\_strata.csv}: the three fixed-stratum laws
in Table~\ref{tab:strata-isotropic} for $q=3$, $r=2$;
\item \texttt{nuisance\_size.csv}: nuisance-model rejection probabilities
and empirical quantiles;
\item \texttt{lower\_transition.csv}: fixed lower-stratum and local
rank-transition experiments;
\item \texttt{isotropic\_transition\_dominance.csv}: coupled checks of the
isotropic pathwise transition bound;
\item \texttt{anisotropic\_weights.csv}: top-stratum intrinsic-volume
weights and critical values;
\item \texttt{derivative\_check.csv}: analytic derivative and the independent
$h=10^{-5}$ finite difference;
\item \texttt{finite\_differences.csv}: the full convergence table;
\item \texttt{ascent.csv}: endpoint, backtracking, and termination summaries;
\item \texttt{ascent\_trace.csv}: every accepted Armijo iterate;
\item \texttt{lower\_anisotropic.csv}: the nonconvex anisotropic lower-stratum
experiment;
\item {\footnotesize\texttt{corank\_one\_transition\_dominance.csv}}: supporting-hyperplane
checks for anisotropic active corank one;
\item \texttt{reproduction\_manifest.csv}: R version, RNG convention,
replication counts, analytic and Monte Carlo critical values, and derivative
checks;
\item \texttt{sessionInfo.txt}: complete R session information; and
\item \texttt{README.txt}: execution commands and the mapping from output
files to manuscript tables.
\end{enumerate}

In the following display, $q_{0.05}$ denotes an exact or deterministic
upper-$5\%$ critical value and $\widehat q_{0.05}$ an empirical Monte Carlo
quantile.  The full-run checks are
\begin{equation}
q_{0.05}^{\mathrm{lower}}=1.369475,
\qquad
\widehat q_{0.05}^{\mathrm{top,MC}}=5.490850,
\qquad
q_{0.05}^{\mathrm{top}}=q_{0.05}^{\mathrm{aniso}}(1)
=5.4845131865391.
\label{eq:supp-reproduction-checks}
\end{equation}
The seed-9002 Monte Carlo value is retained as a diagnostic.  The analytic
value is used for all top-stratum rejection decisions in the
revised script.  For a step $h>0$, define the centred finite-difference approximation
\begin{equation}
q_{0.05,h}^{\mathrm{FD}\prime}(1)
=\frac{q_{0.05}(1+h)-q_{0.05}(1-h)}{2h}.
\end{equation}
The derivative checks are
\begin{equation}
q_{0.05}'(1)=-0.339124730022,
\qquad
q_{0.05,h}^{\mathrm{FD}\prime}(1)=-0.339124730075
\quad(h=10^{-5}).
\label{eq:supp-derivative-checks}
\end{equation}
The absolute discrepancy is $5.34\times10^{-11}$.  The analytic benchmark
is returned by the script's family-specific integral derivative routine; it
is not a smaller-step finite difference.

All Monte Carlo calculations use the following random-number-generator (RNG) configuration:
\begin{verbatim}
RNGkind("Mersenne-Twister", "Inversion", "Rejection")
\end{verbatim}
under R 4.5.2 in the supplied run.  The fixed lower-stratum and transition
limiting samples use $10^6$ replications and seed $41000+10c$ for
$c\in\{0,2,4\}$.  The rank-one top-stratum sample uses seed $9002$.  The
finite-sample Wishart experiments use $5\times10^5$ replications and seed
$30000+10n+10c$.  The nuisance simulations use $10^6$ replications and seed
$50000+n$.  The anisotropic lower-stratum rows use $10^6$ replications and
seed $61000+\operatorname{round}(100\gamma)$.  Quadrature, chi-bar-square
inversion, analytic differentiation, finite differences, and ascent are
deterministic once their tolerances are fixed.

For the lower-stratum Gaussian model in
Subsection~\ref{subsec:lower-stratum-finite} of the main article, if
$d_1\ge d_2\ge d_3$ are the sample-covariance eigenvalues, spectral
profiling gives
\begin{equation}
\Lambda_n
=
n\sum_{j=2}^3
\{d_j-1-\log d_j\}\mathbf1_{\{d_j>1\}}.
\end{equation}
Under $\Sigma_n=\diag(c/\sqrt n,0,0)$, the local mean in whitened
coordinates is $cE_{11}/\sqrt2$, where $E_{11}$ is the first diagonal matrix
unit, yielding the reported transition law.  The
implementation uses the Bartlett decomposition and avoids storing the
underlying observations.

For the anisotropic lower-stratum experiment, the bisection equation
\eqref{eq:supp-lower-aniso-tau} is used for generic points with 70 bisection
steps; the closed-endpoint formulas
\eqref{eq:supp-lower-aniso-endpoint-small-gamma}--
\eqref{eq:supp-lower-aniso-endpoint-large-gamma} handle exceptional axes.
The simulated
zero masses $0.800194$, $0.853997$, and $0.911943$ at
$\gamma=0.25,1,4$ agree with the geometric values
$1-v_3(\gamma)=0.799751,0.853553,0.912211$.  The corresponding upper
$5\%$ critical values are $0.444867$, $0.227754$, and $0.035953$.

The script also checks nonnegativity of all simulated likelihood-ratio
statistics, unit sums of the chi-bar-square weights, monotonicity of the
selected anisotropic critical values, and the two coupled pathwise bounds.
It writes \texttt{sessionInfo.txt} and a human-readable \texttt{README.txt}
automatically.

The Armijo run resets the trial step to one at every iteration, halves
rejected steps, uses constant $10^{-4}$, gradient tolerance $10^{-7}$, and
minimum step $10^{-12}$.  When the minimum step is reached, the script writes
the status \texttt{step floor}, exactly as displayed in Table~\ref{tab:ascent}.  The summary
table in the main article is read directly from \texttt{ascent.csv}; the
detailed iteration audit is \texttt{ascent\_trace.csv}.

\end{document}